\newtheorem{theorem}{Theorem}[section]
\newtheorem{lemma}[theorem]{Lemma}
\newtheorem{proposition}[theorem]{Proposition}
\newtheorem{corollary}[theorem]{Corollary}
\newtheorem{example}[theorem]{Example}
\newtheorem{definition}[theorem]{Definition}
\newtheorem{remark}[theorem]{Remark}
\newenvironment{proof}[1][Proof]{\begin{trivlist}\item[\hskip \labelsep {\bfseries #1}]}{\end{trivlist}}
\title{$\mathbb{Z}_2$ and Klein graded Lie algebras}
\author{John Tsartsaflis}
\date{September 2012}
\begin{document}
\latintext
\maketitle
\newpage
\begin{abstract}
\addcontentsline{toc}{section}{\numberline{}Abstract}
We recall the definitions and basic results for Lie superalgebras. We specify the definition for Klein graded Lie algebras and, motivated by well-known results for Lie superalgebras, we prove similar results for Klein graded Lie algebras. More precisely, we state the theorems of  Poincar\'e-Birkhoff-Witt and Ado's, as well as  Schur's lemma. Moreover, we present two examples of Klein graded algebras.
\end{abstract}
\newpage
\tableofcontents

\newpage

\section{Introduction}

This work aims to present the theory of finite dimensional Lie superalgebras and Klein graded Lie algrebas. The theory of Lie superalgebra has been developed as a generalization of Lie algebras  to include a $Z_2$ grading and it is  important in theoretical physics where they are used to describe the mathematics of supersymmetry. 

The theory of Lie superalgebras(or, as they are also called, $Z_2$ graded Lie algebras) has seen a remarkable evolution between 1974 and 2000. Lie superalgebras was firstly introduced by Corwin, Ne'eman and Sternberg in \cite{CorNeSte} as it was known in 1974. The most important result in the theory seems to be the classification by V.G. Kac of the finite dimensional simple Lie superalgebras over an algebraically closed field of characteristic zero in 1975(a sketch of this article has been given in \cite{kac2}). In 1979, M. Scheunert \cite{scheunert} gives us a mathematical description of this classification and several theorems that was omitted in \cite{kac2} are included.

In 2000 a new kind of graded Lie algebra made it's appearance. Yand Weimin and Jing Sicong \cite{yaji} introduce the Klein graded Lie algebras in order to study symmetries and supersymmetries of paraparticle systems. A significant work on graded Lie algebras has been already made by many authors including V. G. Kac \cite{kac},\cite{kac2} and M. Scheunert \cite{scheunert},\cite{scheunert2},\cite{scheunert3} as they generalize the theory of Lie algebras a step beyond Lie superalgebras. So, a lot of work was already done in the general case, however Klein graded Lie algebra theory was to be studied more comprehencively due to it's application.

Apart from presenting the theory of Lie superalgebras,we try to develop similar to Lie superalgebras' results for Klein graded case. The interest in Klein graded algebras rose by an algebra used by physicists that seems to be a Klein graded Lie algebra as we will see in section \ref{RPSexample}.

In section \ref{gradedstructures} we present the definition for a graded vector space and a G-graded Lie algebra, where G a finite abelian group. We then introduce the special cases of a Lie superalgebra and Klein graded Lie algebra. A discussion on the color of a Klein graded Lie algebra lead us to identify four types of color which are the only possible color functions for such an algebra.

In section \ref{substructure}, remark are made on the substructures of a Lie superalgebra and on the substructures of a Klein graded Lie algebra. In the following section we construct the general linear algebra which plays the same role as the general linear algebra of Lie algebras. The elements of general linear algebra are presented in matrix form.

Section \ref{derivations} is used to state and prove some propositions made by Zhang Qingcheng and Zhang Yongzheng in \cite{Erdos01} for the Klein grading case. Definitions of solvability and nilpotency are refered in these section for later use.

In section \ref{envelopingalgebra} we construct the enveloping algebra for a Lie superalgebra and for Klein graded algebra respectively and we use a variety of propositions a remarks, in the way that M. Scheunert \cite{scheunert} did, to prove the Poincar\'e-Birkhoff-Witt theorem. We keep following M. Scheunert in section \ref{representations} to prove Schur's lemma for both Lie superalgebra and Klein graded Lie algebra case. Ado's theorem is described in section \ref{inducedsrepresentations}. These resutls are already known and proven in the general case of a (G,$\theta$)-graded Lie algebra, where $\theta$ an arbitrary color and G a finite abelian group, thanks to  V. G. Kac and M. Scheunert.

Finally, in the last two sections, examples of Klein graded Lie algebras are studied. We examine the special case of a Klein graded Lie algebra $L=L_e\oplus L_r \oplus L_s \oplus L_t$ with $sl(2,C)$ being the $L_e$ part of it and we try to find some resutls based on that fact. An example taken from mathematical physics constitutes our last section, the relative parabose set, which was the inspiring example for studying Klein graded  Lie algrebas in the first place.

In this work, original result like simple calculations or propositions for Klein graded Lie algebras are introduced in the following. In section 5, proposition \ref{53prop}. In section 7 proposition \ref{76prop}, lemma \ref{71lemma} and remark \ref{712remark}. In section 9, all calculations and basics results are original.  Let $L=L_e \oplus L_r \oplus L_s \oplus L_t$ is a Klein graded algebra, we set $L_e = sl(2,\mathbb{C})$. This way may lead someone to classify all Klein graded Lie algebras which have a simple Lie algebra as the $L_e$ part. This is only a workaround, but it looks promising.

\section{General on graded structures}\label{gradedstructures}

\begin{remark}
All vector spaces and algebras we are dealing with, they are over a field K of characteristic zero and they are finite dimensional.
\end{remark}
\begin{definition}
Let $K$ be a field, and let $V$ be a vector space over $K$ equipped with an additional binary operation from $V \times V \to V$, denoted here by  $\cdot$ (i.e. if x and y are any two elements of $V, x \cdot y$ is the product of $x$ and $y$). Then $V$ is an algebra over $K$ if the following identities hold for any three elements $x, y$, and $z$ of $V$, and all elements ("scalars") $a$ and $b$ of $K$:
\[ (x + y) \cdot z = x \cdot z + y \cdot z \]
\[ x \cdot (y + z) = x \cdot y + x \cdot z \]
\[ (ax) \cdot (by) = (ab)(x \cdot y)\]

We call the algebra $V$ assiciative if
\[ (x\cdot y)\cdot z = x \cdot (y \cdot z)\]

We call the algebra $V$ unital if there is an element $e \in V$:
\[ e \cdot x = x \cdot e = x \]
for all $x \in V$
\end{definition}

The associative algebras appearing in this work will always contain a unit element.

\begin{definition}
Let V be a vector space over the field $\mathcal{K}$ and G an abelian group. In this paper the gradation group is always assumed an abelian group. A G-gradation of the vector space V is a family $(V_g)_{g \in G }$ of sub spaces of V such that 

\begin{equation}
V = \bigoplus_{g \in G} V_g
\end{equation}
\end{definition}

An element of V is called homogeneous of degree g, $g \in G$, if it is an element of $V_g$. In the case G = $\mathbb{Z}_2$ the elements of $V_0$ are also called even, and the elements of $V_1$ are called odd.

Every element of $x \in V$ has a unique decomposition of the form
\begin{equation}
x = \sum_{g \in G} x_g   \hspace{20pt} x_g \in V_g, g \in G
\end{equation}
In the above relation, only finitely many $x_g$ are different from zero. The element $x_g$ is called the homogeneous component of x of degree g.

A subspace of U of V is called G-graded if it contains the homogeneous components of all its elements, i.e. if 
\begin{equation}
U = \bigoplus_{g \in G} (U \cap V_g)
\end{equation}

\begin{definition}
Let W = $\bigoplus_{g \in G} W_g$ be a second $G$-graded vector space. A linear mapping
\begin{equation}
f: V \to W
\end{equation}
is said to be homogeneous of degree $g \in G$, if 
\begin{equation}
f(V_q) \subset W_{qg} \hspace{20pt} \forall q \in G.
\end{equation}

The mapping f is called a homomorphism of the $G$-graded vector space $V$ into the $G$-graded vector space $W$ if and only if is homogeneous of degree $e$(the identity element of $G$). The mapping f is called an isomorphism if it is a bijective homomorphism.
\end{definition}

\begin{definition}
Let A be an algebra over $\mathcal{K}$. The algebra A is said to be G-graded if  the underlying vector space is G-graded,
\begin{equation}
A = \bigoplus_{g \in G} A_g
\end{equation}
and if, furthermore,
\begin{equation}
A_g A_q \subset A_{gq} \hspace{20pt} \forall q,g \in G
\end{equation}
Evidently, $A_0$ is a subalgrbra of $A$. If A has a unit element then this element Lies in $A_0$.
\end{definition}

\begin{definition}
A left ideal of a algebra is a linear subspace that has the property that any element of the subspace multipLied on the left by any element of the algebra produces an element of the subspace. In symbols, we say that a subset $L$ of a algebra $A$ is a left ideal if for every $x$ and $y$ in $L, z$ in $A$ and c in $K$, we have the following three statements.
\begin{align*}
&x + y \in L \text{(L is closed under addition) }\\
& cx  \in L \text{(L is closed under scalar multiplication) }\\
&zx  \in L \text{(L is closed under left multiplication by arbitrary elements)}
\end{align*}
A two-sided ideal is a subset that is both a left and a right ideal. The term ideal will be used to mean a two-sided ideal.
\end{definition}

A homomorphism of $G$-graded algebras is by definition a homomorphism of the underlying algebras as well as of the underlying $G$-graded vector spaces:
\begin{definition}
A homomorphism of degree g between two graded algebras A and B,over a field $K$,is a map $F: A \to B$ such
\begin{align*}
F(kx) &= kF(x) \\
F(x + y) &= F(x) + F(y) \\
F(xy) &= F(x)F(y) \\
F(x_r) & \subset B_{rg}
\end{align*}
for all $k \in K$ , $x,y \in A$ , $x_r \in A_r$ , $ r,g \in G$.
\end{definition}

A graded subalgebra of a $G$-graded algebra $A$ is a subalgebra of the algebra A which is, in addition, a graded subspace of the G-graded vector space $A$. A graded ideal of a $G$-graded algebra $A$ is an ideal of the algebra $A$ which is, in addition, a graded subspace of the $G$-graded vector space $A$. 
The quotient algebra of a G-graded algebra modulo a (two-side) graded ideal is again a G-graded algebra.

\begin{definition}
Let G be a group then a map $\theta : \mathbf{G} \times \mathbf{G} \to \mathbb{C^*} $ is called a color map if
\begin{itemize}\addtolength{\itemsep}{-0.6\baselineskip}
	\item $\theta (a,b) \theta(b,a) = 1$
	\item $\theta (a,bc) = \theta(a,b) \theta(a,c) \hspace{20 pt}, \forall a,b,c \in \mathbf{G}$
	\item $\theta (ab,c) = \theta(a,c) \theta(b,c) \hspace{20 pt}, \forall a,b,c \in \mathbf{G}$
\end{itemize}
\end{definition}

\noindent
\begin{definition}
An algebra $\mathbf{L}$  is a $(G,\theta)$-Graded Lie algebra iff:
\begin{itemize}\addtolength{\itemsep}{-0.6\baselineskip}
	\item $G$ is an abelian group
	\item $\exists L_a, a \in G$ such as $L = \bigoplus_{a \in G} L_a$
	\item $\exists \lbrack  \hspace{2 pt}, \rbrack: L \times L \to L \hspace{5 pt}$ such as
	\begin{enumerate}[i.]
	\item $[A,B] = -\theta(a,b)[B,A] \hspace{20 pt}$ for all homogeneous elements A,B $\in L$
	\item $0 = \theta(z,x)[X,[Y,Z]] + \theta(y,z)[Z,[X,Y]] + \theta(x,y)[Y,[Z,X]] \hspace{10 pt}$  (Jacobi identity)
 \linebreak for all homogeneous elements  X,Y,Z $\in L$ 
	\item $[L_a,L_b] \subseteq L_{ab}$ , for all $a,b \in G$
	\end{enumerate}

\end{itemize}
\end{definition}
\noindent
If an element A $\in L_{a}$ we call it homogeneous of degree a.

\begin{example}
A Lie algebra is a trivial graded algebra. The gradation group is the trivial group \{e\} containing only the identity element and the gradation map $\theta =1$.
\end{example}
\begin{example}
A Lie superalgebra is a $(Z_2,\theta)$-graded Lie algebra, the map $\theta$ will be described below for this case.
\end{example}

 From now on, we will use capital letter for the elements of an algebra, and small letter for their degree.

\begin{definition}
 A homomorphism between two graded Lie algebras $L,L'$ (over the same ground field) is a linear map that is compatible with the commutators:
\begin{equation}
f: L \to L', \hspace{20pt} f[x,y] = [f(x),f(y)]
\end{equation}
for all x,y in L.
\end{definition}

\begin{remark}
From now on, we shall call a homomorphism(without refering the degree) a homomorphism of degree e.
\end{remark}

\subsection{Superalgebra case}\label{superalgebra}
First of all, a superalgebra is a $Z_2$-graded algebra, the physicists use the prefix super, it comes from the theory of supersymmetry in theoretical physics.  Two colors can be  defined on $\mathbb{Z}_2\times\mathbb{Z}_2$. Using properties of a color map we get that
\[
\theta(a,a)^2 = 1 \hspace{20pt},\forall a \in \mathbb{Z}_2
\]
and
\[
\theta(a,0) = 1 \; \forall a \in \mathbb{Z}_2
\]
Hence, $\theta(a,0)$  = 1. We don't have any rescrictions for $\theta(1,1)$. Consider $\epsilon \in $ \{-1,1\}, then every color on $\mathbb{Z}_2\times\mathbb{Z}_2$ can be described as
$$\begin{array}{|c|c|c|}
\hline
\theta & 0 & 1\\
\hline
0 & 1 & 1 \\
\hline
1 & 1 & \epsilon \\
\hline
\end{array}$$

If L is a ( $\mathbb{Z}_2\times\mathbb{Z}_2$, $\theta$)-graded Lie algebra, in the case that $\epsilon$ = 1 we have an ordinary Lie algebra.

If L is a ( $\mathbb{Z}_2\times\mathbb{Z}_2$, $\theta$)-graded Lie algebra, in case that $\epsilon$ = -1 we have a Lie superalgebra.

\begin{definition}
Let L = $L_0 \bigoplus L_1$ be a superalgebra whose multiplication is denoted by a pointed bracket [ , ]. This impLies in particular that for all a,b $ \in \mathbb{Z}_2$
\begin{equation}
[L_a,L_b] \subset L_{a+b}
\end{equation}
We call L a Lie superalgebra if the multiplication satisfies the following identities:
\begin{equation}
[A,B] = -(-1)^{ab}[B,A]
\end{equation}
\begin{center}
(graded skew-symmetry)
\end{center}
\begin{equation}
(-1)^{ac}[A,[B,C]] + (-1)^{ab}[B,[C,A]] + (-1)^{bc}[C,[A,B]] =0
\end{equation}
\begin{center}
(graded jacobi identity)
\end{center}
for all A $\in L_a$, B $\in L_b$, C $\in L_c$ ; a,b,c $\in  \mathbb{Z}_2$.
\end{definition}

\subsection{Klein-Graded case}\label{kleingraded}

Lets recall the Klein group's basic facts. It's a group isomorphic to $\mathbb{Z}_2 \bigoplus \mathbb{Z}_2$  and if we use the symbols \{e,s,t,r\} for it's elements, then the Cayley table for this group is given by:

$$\begin{array}{|c||c|c|c|c|}
\hline
. & e & r & s & t \\
\hline\hline
e & e & r & s & t \\
\hline
r & r & e & t & s \\
\hline
s & s & t & e & r \\
\hline
t & t & s & r & e \\
\hline\end{array}$$
\noindent

If a color $\theta : \mathbf{K \times K} \to \mathbb{C^*}$, and $\mathbf{K} $ is the Klein group, then we can see that this color function takes some specific forms. By the second and third property of the color map definition, if we set a=b=c we can get that $\theta(e,a)=\theta(a,e)=1 \hspace{10 pt},\forall a\in \mathbf{K}$.
Also, using the second property we can see that:

\[ \theta(a,b)^2 = \theta(a,b)\theta(a,b) = \theta(a,b^2)=\theta(a,e)=1 \newline \Rightarrow \theta(a,b)=\pm1\]
\[ \Rightarrow \theta(a,b) = \theta(b,a)\]
Furthermore, \hspace{5 pt} \[\theta(ab,ab)=\theta(ab,a)\theta(b,b)\theta(a,a)\theta(a,b) = \theta(a,a)\theta(b,b)\]
\newline So, if we set a=r, b=s, we conclude that:
\[\theta(t,t)=\theta(r,r)\theta(s,s)\]
\[\Rightarrow \theta(r,r)\theta(s,s)\theta(t,t)=1\]
and the possible colors are the following:

$$\begin{array}{c c}
$$\begin{array}{|c||c|c|c|c|}
\hline
\theta & e & r & s & t \\
\hline\hline
e & 1 & 1 & 1 & 1 \\
\hline
r & 1 & 1 & \epsilon & \epsilon \\
\hline
s & 1 & \epsilon & 1 & \epsilon \\
\hline
t & 1 & \epsilon & \epsilon & 1 \\
\hline\end{array}$$
 & 
$$\begin{array}{|c||c|c|c|c|}
\hline
\theta & e & r & s & t \\
\hline\hline
e & 1 & 1 & 1 & 1 \\
\hline
r & 1 & -1 & \epsilon & -\epsilon \\
\hline
s & 1 & \epsilon & -1 & -\epsilon \\
\hline
t & 1 & -\epsilon &-\epsilon & 1 \\
\hline\end{array}$$
  \\
\end{array}$$
\noindent
with $\epsilon \in \{-1,1\}$.
\newline
Furthermore, we can see four cases about the color:
\newline
$$\begin{array}{c c c c}
$$\begin{array}{|c||c|c|c|c|}
\hline
\theta_1 & e & r & s & t \\
\hline\hline
e & 1 & 1 & 1 & 1 \\
\hline
r & 1 & 1 & 1 & 1 \\
\hline
s & 1 & 1 & 1 & 1 \\
\hline
t & 1 & 1 & 1 & 1 \\
\hline\end{array}$$
 & 
$$\begin{array}{|c||c|c|c|c|}
\hline
\theta_2 & e & r & s & t \\
\hline\hline
e & 1 & 1 & 1 & 1 \\
\hline
r & 1 & 1 & -1 & -1 \\
\hline
s & 1 & -1 & 1 & -1 \\
\hline
t & 1 & -1 &-1 & 1 \\
\hline\end{array}$$
& 
$$\begin{array}{|c||c|c|c|c|}
\hline
\theta_3 & e & r & s & t \\
\hline\hline
e & 1 & 1 & 1 & 1 \\
\hline
r & 1 & -1 & 1 & -1 \\
\hline
s & 1 & 1 & -1 & -1 \\
\hline
t & 1 & -1 &-1 & 1 \\
\hline\end{array}$$
& 
$$\begin{array}{|c||c|c|c|c|}
\hline
\theta_4 & e & r & s & t \\
\hline\hline
e & 1 & 1 & 1 & 1 \\
\hline
r & 1 & -1 & -1 & 1 \\
\hline
s & 1 & -1 & -1 & 1 \\
\hline
t & 1 & 1 &1 & 1 \\
\hline\end{array}$$
  \\
\end{array}$$
\newline

We can find a map from Klein group to $Z_2$, $f:\mathcal{K} \times \mathcal{K} \to Z_2$ ($Z_2$ as a ring)  in order to describe each of these color in the following way: $\theta = (-1)^f$. If we consider the elements of  $\mathcal{K}$ as a pair $(a,b)$ such as e=(0,0), r=(1,0), s=(0,1), t=(1,1), then it's easy to see that the map for the first color is \[f_1((a_1,b_1),(a_2,b_2))=0\] for the second it is  \[f_2((a_1,b_1),(a_2,b_2))=a_1b_2 + a_2b_1\] for the third it is  \[f_3((a_1,b_1),(a_2,b_2))=a_1a_2 + b_1b_2\] for the last one it is  \[f_4((a_1,b_1),(a_2,b_2))=(a_1 + b_1)(a_2+b_2)\].

Note that the previous calculations are modulo 2.

\begin{definition}
Let $\theta$ be a color map on the Klein group and let $\mathcal{K}$ be the Klein group, then a  Klein graded Lie algebra is a ($\mathcal{K}$,$\theta$)-graded Lie algebra. This impLies in particular that for all a,b $ \in \mathcal{K}$
\begin{equation}
[L_a,L_b] \subset L_{ab}
\end{equation}
and
\begin{equation}
[A,B] = -\theta(a,b)[B,A]
\end{equation}
\begin{center}
(graded skew-symmetry)
\end{center}
\begin{equation}
\theta(a,c)[A,[B,C]] + \theta(a,b)[B,[C,A]] +\theta(b,c)[C,[A,B]] =0
\end{equation}
\begin{center}
(graded jacobi identity)
\end{center}
for all A $\in L_a$, B $\in L_b$, C $\in L_c$ ; a,b,c $\in  \mathcal{K}$.
\end{definition}

\begin{remark}
We can rewrite Jacobi identity in the following way
\[ [A,[B,C]] = [[A,B],C] + \theta(a,b)[B,[A,C]] \]
This equation is known as the Leibniz rule.
\end{remark}

Let $L = L_e \oplus L_r \oplus L_s \oplus L_t$ be a Klein graded Lie algebra, we can obtain the following remarks.

\begin{remark} $L_e$ is a Lie algebra. The elements of $L_e$ have degree e, and $\theta(e,e)$ = 1. If we recall the relations that define a Klein graded Lie algebra and consider the restriction on $L_e$, we can see that they are the same as the definition of a Lie algebra in the case of $L_e$.
\end{remark}

\begin{remark} 
If the graded vector space L is equipped with "inverted multiplication"
\begin{equation*}
(A,B) \to [B,A]
\end{equation*}
then we have:
\[(A,B) = [B,A] = -\theta(b,a)[A,B] = -\theta(b,a)(B,A) = -\theta(a,b)(B,A)\]
Jacobi identity also holds due to the same fact that for the Klein group:
\[\theta(a,b) = \theta(b,a)\]
So we obtain again a Klein graded Lie algebra. 
\end{remark}
The same remarks imply on superalgebra case.

\section{Substructures}\label{substructure}

\subsection{Subalgebras and Ideals}

 Let $L=L_e \oplus L_r \oplus L_s \oplus \L_t$ be a Klein graded Lie Algebra. 
\begin{definition}
A Subalgebra $W=W_e \oplus W_r \oplus W_s \oplus W_t$ is a subset of elements of L which forms a vector subspace of L that is closed with respect to the Lie product of L such that $W_e \subset L_e$, $W_r \subset L_r$, $W_s \subset L_s$ and $W_t \subset L_t$. A subalgebra W of L such that $W \neq L$ is called a proper subalgebra of L.
\end{definition}

\begin{definition}
An ideal I of L is a subalgebra of L  such that $[L,I] \subset I$, that is 
\linebreak
$x \in L, y \in I \rightarrow [x,y] \in I$. An ideal I of L such that $I \neq L$ is called a proper ideal of L.
\end{definition}

\begin{proposition}
Let $I$ be an ideal of $L$. $I_e$ is an ideal of the Lie algebra $L_e$.
\end{proposition}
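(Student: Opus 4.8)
The plan is to verify directly the two defining properties of an ideal of the Lie algebra $L_e$: that $I_e$ is a linear subspace of $L_e$, and that $[L_e, I_e] \subset I_e$. The first is immediate from the structural assumptions. By definition an ideal of $L$ is in particular a graded subalgebra, so $I = I_e \oplus I_r \oplus I_s \oplus I_t$ with $I_e \subset L_e$; being a graded subspace means $I_a = I \cap L_a$ for each $a \in \mathcal{K}$, and in particular $I_e = I \cap L_e$ is a subspace of $L_e$.

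For the second property, I would take arbitrary elements $X \in L_e$ and $Y \in I_e$ and examine $[X,Y]$ from two sides. On the one hand, since $Y \in I_e \subset I$ and $I$ is an ideal of $L$, the containment $[L,I] \subset I$ forces $[X,Y] \in I$. On the other hand, $X$ and $Y$ are both homogeneous of degree $e$, so the grading axiom $[L_a, L_b] \subset L_{ab}$, applied with $a = b = e$ and using $e \cdot e = e$ in the Klein group, gives $[X,Y] \in L_e$. Combining the two observations yields $[X,Y] \in I \cap L_e = I_e$, which is precisely the inclusion $[L_e, I_e] \subset I_e$.

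Finally I would note that $I_e$ is genuinely closed under the bracket, hence a subalgebra and not merely a subspace, since $[I_e, I_e] \subset [L_e, I_e] \subset I_e$; and since $L_e$ is itself an ordinary Lie algebra (as recorded in the earlier remark), $I_e$ satisfies the classical definition of an ideal. There is no substantial obstacle in this argument. The only points requiring a moment's care are the identification $I_e = I \cap L_e$, which rests on $I$ being a \emph{graded} subspace rather than an arbitrary subalgebra, and the fact that the bracket of two degree-$e$ elements stays in $L_e$ because $e$ is the identity of the grading group.
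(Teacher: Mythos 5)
Your proof is correct and follows essentially the same route as the paper, which merely asserts the decomposition $I = I_e \oplus I_r \oplus I_s \oplus I_t$ and the containment $[I_e, L_e] \subset I_e$ without further detail. You have simply filled in the justification the paper leaves implicit, namely that $I_e = I \cap L_e$ by gradedness and that $[X,Y] \in I \cap L_e$ via the ideal property together with $[L_e, L_e] \subset L_{ee} = L_e$.
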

\begin{proof}
If $I$ is an ideal of $L$ then
\[ I = I_e \oplus I_r \oplus I_s \oplus I_t \]
and
\[[I_e,L_e] \subset I_e.\]
\end{proof}

\begin{proposition}
Let $I$ be an ideal of $L$. $I_e\oplus I_r$  is an ideal of the  graded algebra $L_e\oplus L_r$.
\end{proposition}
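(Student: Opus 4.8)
The plan is to exploit that $\{e,r\}$ is a subgroup of the Klein group $\mathcal{K}$ (from the Cayley table, $r\cdot r = e$), so that $L_e \oplus L_r$ is genuinely a $(\{e,r\},\theta)$-graded subalgebra of $L$, and then to verify the ideal condition one homogeneous degree at a time. First I would recall that, since $I$ is an ideal and hence (by the conventions on substructures used already in the previous proposition) a graded subspace of $L$, it decomposes as $I = I_e \oplus I_r \oplus I_s \oplus I_t$ with $I_a = I \cap L_a$ for each $a \in \mathcal{K}$. In particular $I_e \oplus I_r$ is a graded subspace of $L_e \oplus L_r$, which is the first thing the definition of an ideal of $L_e \oplus L_r$ requires.

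The heart of the argument is the absorption condition $[\,L_e\oplus L_r,\; I_e\oplus I_r\,]\subseteq I_e\oplus I_r$. By bilinearity it suffices to treat homogeneous brackets $[L_a, I_b]$ with $a,b \in \{e,r\}$. Here I would combine two facts: the grading condition $[L_a,L_b]\subseteq L_{ab}$, where $ab \in \{e,r\}$ whenever $a,b\in\{e,r\}$, and the hypothesis $[L,I]\subseteq I$ coming from $I$ being an ideal of $L$. Intersecting these gives $[L_a,I_b]\subseteq I \cap L_{ab} = I_{ab}\subseteq I_e\oplus I_r$. Concretely this reads $[L_e,I_e]\subseteq I_e$, $[L_e,I_r]\subseteq I_r$, $[L_r,I_e]\subseteq I_r$ and $[L_r,I_r]\subseteq I_e$, which together establish the claim.

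Note that closure of $I_e \oplus I_r$ under the bracket, needed for it to be a subalgebra, is a special case of the absorption condition (since $I_e \oplus I_r \subseteq L_e \oplus L_r$), so no separate verification is required; if one prefers to spell it out, the identity $r\cdot r = e$ gives $[I_r,I_r]\subseteq I\cap L_e = I_e$, and likewise $[I_e,I_e]\subseteq I_e$, $[I_e,I_r]\subseteq I_r$.

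I do not anticipate any genuine obstacle: the only thing to keep straight is the bookkeeping of degrees via the group table, together with the fact that the ideal $I$ is graded, so that intersecting $[L,I]\subseteq I$ with a fixed homogeneous component $L_{ab}$ recovers precisely $I_{ab}$. The color map $\theta$ plays no role in the verification, since the whole computation concerns only where brackets land in the grading. The argument is the natural analogue of the preceding proposition, replacing the trivial subgroup $\{e\}$ by $\{e,r\}$.
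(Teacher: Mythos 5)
Your proof is correct and follows essentially the same route as the paper's: decompose the graded ideal as $I = I_e \oplus I_r \oplus I_s \oplus I_t$ and verify the four homogeneous inclusions $[L_a, I_b] \subseteq I_{ab}$ for $a,b \in \{e,r\}$, which sum to the absorption condition. Your version merely makes explicit the justification $[L_a,I_b] \subseteq I \cap L_{ab} = I_{ab}$ (and the subgroup structure of $\{e,r\}$) that the paper's terser proof leaves implicit.
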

\begin{proof}
If $I$ is an ideal of $L$ then
\[ I = I_e \oplus I_r \oplus I_s \oplus I_t \].
Hence,
\[[I_e,L_e] \subset I_e\]
\[[I_e,L_r] \subset I_r\]
\[[I_r,L_e] \subset I_r\]
\[[I_r,L_r] \subset I_e\]
and
\[ [I_e\oplus I_r, L_e\oplus L_r] \subset I_e\oplus I_r\]
\end{proof}

\begin{proposition}
Let  $I,J$ two ideals of $L$. Then $I+J$, $I \cap J$, $[I,J]$ are ideals of $L$.
The center of $L$: $\mathcal{Z}(L) = \{z \in L : [z,L] = 0\}$ is an ideal.
\end{proposition}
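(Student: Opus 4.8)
The plan is to verify, for each of the four objects in turn, the three defining requirements of a graded ideal: that it is a graded vector subspace of $L$, that it is closed under the bracket (hence a subalgebra), and that it absorbs brackets with $L$, i.e. $[L,\cdot]\subset\cdot$. A useful preliminary observation removes the middle requirement: if $M\subset L$ is a subspace satisfying $[L,M]\subset M$, then since $M\subset L$ we automatically have $[M,M]\subset[L,M]\subset M$, so $M$ is already a subalgebra. Thus for each of $I+J$, $I\cap J$, $[I,J]$ and $\mathcal{Z}(L)$ it will suffice to check that it is a graded subspace and that it absorbs brackets with $L$.

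For $I+J$ and $I\cap J$ the arguments rest only on the bilinearity of the bracket and the ideal hypotheses on $I$ and $J$. The grading is immediate: for $x\in I$, $y\in J$ the degree-$g$ component of $x+y$ is $x_g+y_g$ with $x_g\in I_g\subset I$ and $y_g\in J_g\subset J$, so it lies in $I+J$; and for $z\in I\cap J$ each component $z_g$ lies in $I_g\cap J_g$ because both $I$ and $J$ are graded. For the absorption I would expand $[W,\,x+y]=[W,x]+[W,y]$ for $W\in L$ and use $[W,x]\in I$, $[W,y]\in J$, so the sum lies in $I+J$; for the intersection, $[W,z]$ lies in $I$ and in $J$ simultaneously whenever $z\in I\cap J$, hence in $I\cap J$.

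The first genuinely graded-Lie-theoretic step is $[I,J]$, the span of all brackets $[A,B]$ with $A\in I$, $B\in J$. Since $I$ and $J$ are graded they are spanned by homogeneous elements, and for homogeneous $A\in I_a$, $B\in J_b$ one has $[A,B]\in L_{ab}$; hence $[I,J]$ is a graded subspace. For the absorption property I would invoke the Leibniz rule recorded earlier, $[W,[A,B]]=[[W,A],B]+\theta(w,a)[A,[W,B]]$, for homogeneous $W\in L_w$, $A\in I_a$, $B\in J_b$. Because $I$ is an ideal, $[W,A]\in I$, so $[[W,A],B]\in[I,J]$; because $J$ is an ideal, $[W,B]\in J$, so $[A,[W,B]]\in[I,J]$; therefore $[W,[A,B]]\in[I,J]$, and the general inclusion $[L,[I,J]]\subset[I,J]$ follows by bilinearity.

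The subtlest point, and the one I expect to be the main obstacle, is that the center $\mathcal{Z}(L)$ is a graded subspace; once that is in hand everything else is easy. That $\mathcal{Z}(L)$ is a subspace is clear from bilinearity. For the grading, take $z=\sum_{g}z_g\in\mathcal{Z}(L)$ and an arbitrary homogeneous $X\in L_h$; then $0=[z,X]=\sum_{g}[z_g,X]$ with $[z_g,X]\in L_{gh}$. Since right multiplication by $h$ is a bijection of the Klein group, the products $gh$ are four distinct elements, so the summands lie in four distinct homogeneous components, and directness of $L=\bigoplus_{g}L_g$ forces $[z_g,X]=0$ for every $g$. As $X$ was an arbitrary homogeneous element and $L$ is spanned by such, each $z_g$ centralizes $L$, i.e. $z_g\in\mathcal{Z}(L)$; thus $\mathcal{Z}(L)$ contains the homogeneous components of its elements. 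The absorption property is then routine: for homogeneous $X\in L_h$ and homogeneous $z\in\mathcal{Z}(L)\cap L_g$ the skew-symmetry gives $[X,z]=-\theta(h,g)[z,X]=0\in\mathcal{Z}(L)$, so $[L,\mathcal{Z}(L)]=0\subset\mathcal{Z}(L)$, completing the proof.
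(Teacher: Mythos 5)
Your proof is correct, and on the three absorption arguments it coincides with the paper's: bilinearity of the bracket handles $I+J$ and $I\cap J$, and the Leibniz rule $[z,[x,y]]=[[z,x],y]+\theta(z,x)[x,[z,y]]$ handles $[I,J]$, exactly as in the text. Where you genuinely go beyond the paper is in the graded verifications, above all for the center. The paper's treatment of $\mathcal{Z}(L)$ merely observes that $[z,x]=0$ and hence trivially lies in $\mathcal{Z}(L)$; it never checks that $\mathcal{Z}(L)$ is a graded subspace, nor does it address the fact that the center is defined by $[z,L]=0$ (central element on the left) while the ideal condition $[L,I]\subset I$ puts the element of $I$ on the right, so that passing between the two via the skew-symmetry $[A,B]=-\theta(a,b)[B,A]$ is only licensed for homogeneous elements. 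Your argument that homogeneous components of central elements are central --- writing $0=[z,X]=\sum_g [z_g,X]$ with $[z_g,X]\in L_{gh}$, then using the bijectivity of $g\mapsto gh$ together with the directness of $L=\bigoplus_g L_g$ to kill each summand --- is precisely what is needed to close that gap, and since it uses only the group structure it works verbatim for any gradation group, not just the Klein group. Your preliminary remark that $[L,M]\subset M$ already forces $[M,M]\subset[L,M]\subset M$ is likewise a tidy economy the paper does not make explicit: the paper simply omits the subalgebra and gradedness checks for $I+J$, $I\cap J$ and $[I,J]$ (which, under its conventions, are part of the definition of an ideal), whereas yours are routine but complete, e.g.\ noting that $[I,J]$ is spanned by homogeneous brackets $[A,B]\in L_{ab}$ and hence graded. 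In short: same skeleton as the paper, but your version is the more careful one, and the center-grading lemma is a genuine addition rather than a restatement.
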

\begin{proof}
Let z $\in$ L, x $\in$ I, y $\in$ J.

%
%
%

For the sum, due to linearity we have that
\begin{equation}
[z,x+y] = [z,x] + [z,y]
\end{equation}
I,J are ideals, so $[z,x]$ is in $I$, and $[z,y]$ is in $J$ so $[z,x] + [z,y]$ is in $I + J$ and $I + J$ is an ideal.

In the case of intersection, let $x \in I\cap$J, $z \in L$. If we consider $x$ as an element of $I$ we have
\begin{equation}
[z,x] \in I
\end{equation}
if we consider $x$ as an element of $J$ we have
\begin{equation}
[z,x] \in J
\end{equation}
finally we get that $[z,x]$ is in the intersection ideal.

In the case of the bracket ideal, we have that
\begin{equation}
[z,[x,y]] = [[z,x],y] + \theta(z,x)[x,[z,y]]
\end{equation}
$I,J$ are ideals so we have that $[z,x]$ and $[z,y]$ are in $I$ and $J$ respectivly. Hence, $[[z,x],y] \in [I,J]$ and $[x,[z,y]] \in [I,J]$, consequently $[I,J]$ is an ideal.

Finally, the center of L is an ideal. Let x in $\mathcal{Z}(L)$ and z in L. We have \[[z,x] = 0\] so\[[y,[z,x]] =0\] for every y in L. Hence, [z,x] is in $\mathcal{Z}(L)$, so $\mathcal{Z}(L)$ is an ideal.
\end{proof}

\begin{theorem}
Let a be an graded ideal of L, and let $\pi : L \to L/a$ be the natural projection.Let $L'$ be another Klein graded Lie algebra and suppose that $f : L \to L'$  is a homomorphism such that $a \subset kerf$. Then there is a unique homomorphism $ f' : L/a \to L'$ satisfying $ f'  \circ \pi = f$.
\end{theorem}
\begin{proof}
Notice that
\[\pi(L_x) = L_x + a \]
for all x in Klein group. Therefore, $\pi$ is a homomorphism of degree e. 
Let $f': L/a \to L'$ be given by $f'(x + a) = f(x)$, for all $x \in L$. 
Since f(a) = 0, $f'$ is well-defined. It is a homomorphism since 
\[
f'[x + a,y + a] = f'([x,y]+a) = f[x,y] = [f(x),f(y)] = [f'(x+a), f'(y+a)]
\]
and the degree of $f'$ is e
\[f'(L_x + a) = f(L_x) \subset L_x\]
for all x in Klein group.
Finally, it is unique since the condition $f ' \circ \pi = f$ means that, for all $x \in L$, 
\[f ' (x+a) = f ' \circ \pi (x) = f(x).\]
\end{proof}

\begin{corollary}
 Let $f : L \to L'$  be a homomorphism of  Klein graded Lie algebras. Then the image f(L) is a Klein graded Lie subalgebra of $L'$, and the resulting map $f' : L/kerf \to f(L)$ is an isomorphism. Thus, if $f$ is onto, then $f'$ is an isomorphism of $L/kerf$ onto $L'$.
\end{corollary}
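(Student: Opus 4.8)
The plan is to deduce this corollary directly from the preceding factor theorem by taking the graded ideal there to be $\ker f$ itself. First I would verify that $\ker f$ is a graded ideal of $L$, since this is precisely what makes $L/\ker f$ a Klein graded Lie algebra and allows the previous theorem to apply. That $\ker f$ is an ideal is immediate from the homomorphism property: for $z \in L$ and $x \in \ker f$ we have $f([z,x]) = [f(z),f(x)] = [f(z),0] = 0$, so $[z,x] \in \ker f$. That $\ker f$ is \emph{graded} is where I would use crucially that $f$ has degree $e$: writing an element as $x = \sum_{a \in \mathcal{K}} x_a$ with each $x_a \in L_a$, homogeneity gives $f(x_a) \in L'_a$, so the relation $0 = f(x) = \sum_{a} f(x_a)$ is a decomposition into distinct homogeneous components of $L'$, forcing each $f(x_a) = 0$ and hence each $x_a \in \ker f$.

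Next I would check that $f(L)$ is a graded subalgebra of $L'$. Closure under the bracket follows at once from $[f(x),f(y)] = f([x,y]) \in f(L)$, and gradedness again follows from $f$ being of degree $e$, giving $f(L) = \bigoplus_{a \in \mathcal{K}} f(L_a)$ with $f(L_a) \subset L'_a$. With $\ker f$ established as a graded ideal and the trivial inclusion $\ker f \subset \ker f$, the preceding theorem produces a unique homomorphism $f' : L/\ker f \to L'$ of degree $e$ satisfying $f' \circ \pi = f$, explicitly $f'(x + \ker f) = f(x)$. Its image is exactly $f(L)$, so I may regard $f'$ as a map onto $f(L)$, surjective by construction. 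For injectivity, if $f'(x + \ker f) = 0$ then $f(x) = 0$, so $x \in \ker f$ and $x + \ker f$ is the zero coset; thus $\ker f' = 0$. A bijective homomorphism of degree $e$ is an isomorphism, which gives the isomorphism $f' : L/\ker f \to f(L)$, and when $f$ is onto we have $f(L) = L'$, yielding the final assertion.

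I expect the only genuinely delicate point to be the gradedness of $\ker f$, together with the corresponding fact that $f'$ is homogeneous of degree $e$ rather than merely an abstract Lie-algebra isomorphism; both hinge on the convention (fixed earlier in the excerpt) that a homomorphism without a stated degree is one of degree $e$. Everything else is a routine transcription of the classical first isomorphism theorem, with the bracket and grading relations inherited directly from the hypothesis that $f$ is a homomorphism.
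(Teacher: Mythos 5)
Your proof is correct and follows essentially the same route as the paper: apply the preceding factor theorem with $a = \ker f$, note that $[f(x),f(y)] = f([x,y]) \in f(L)$ gives closure of the image, and check injectivity of $f'$ directly from $f'(x+\ker f)=0 \Rightarrow x \in \ker f$. You are in fact more careful than the paper's own proof, which silently assumes $\ker f$ is a \emph{graded} ideal; your degree-$e$ argument showing each homogeneous component $x_a$ of a kernel element lies in $\ker f$ supplies exactly the hypothesis the preceding theorem requires.
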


\begin{proof}
 For any $x, y \in L$, we have 
\[[f(x),f(y)] = f[x,y] \in f(L)\]
so $f(L)$ is a subalgbra of $L'$. Put $a = kerf $ in the previous theorem. Then $f'$ is injective since if $f'(x + a) = 0$, then $f(x) = 0$, so $x \in a$, and thus x + a = a. Thus the map $f' : L/kerf \to f(L)$ is an isomorphism.
\end{proof}

\begin{theorem}
 Let $L, L'$ be Klein graded Lie algebras and let f : $L \to L'$ be a surjective homomorphism.
\begin{enumerate}
\item If a is a graded ideal of $L$, then f(a) is a graded ideal of $L'$.
\item If s is an graded ideal of $L'$, then $f^{-1}(s)$ is an graded ideal of $L$ which contains $kerf$.
\item The mappings a $\to$ f(a) and s $\to  f^{-1}(s)$ are inverse mappings between the set of all graded ideals of $L$ which contain $kerf$ and the set of all graded ideals of $L'$, so the two sets of graded ideals are in one-to-one correspondence.
\item L/a $\cong L'$/f(a) for all ideals a of L containing kerf.
\end{enumerate}
\end{theorem}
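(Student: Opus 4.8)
The plan is to verify each of the four claims in turn, leaning throughout on the fact that every homomorphism in this paper is of degree $e$ (so that $f(L_g) \subset L'_g$ for every $g \in \mathcal{K}$) together with the bracket-compatibility $f[x,y] = [f(x),f(y)]$, and finally to deduce part (4) from the first isomorphism theorem already established in the corollary above rather than redoing it by hand.

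First I would prove (1). Because $f$ has degree $e$, for each $g \in \mathcal{K}$ we have $f(a \cap L_g) \subset L'_g$, and since $a = \bigoplus_{g} (a \cap L_g)$ is graded it follows that $f(a) = \bigoplus_{g} f(a \cap L_g)$ is a graded subspace of $L'$. To see that $f(a)$ is an ideal, take any $y' \in L'$ and $f(x) \in f(a)$ with $x \in a$; by surjectivity $y' = f(z)$ for some $z \in L$, and then $[y', f(x)] = [f(z), f(x)] = f[z,x] \in f(a)$ since $[z,x] \in a$. The argument for (2) is dual: $\ker f \subset f^{-1}(s)$ because $f(\ker f) = 0 \in s$; if $x \in f^{-1}(s)$ then writing $f(x) = \sum_g f(x_g)$ with $f(x_g) \in L'_g$ and using that $s$ is graded gives $f(x_g) \in s$, hence each $x_g \in f^{-1}(s)$, so $f^{-1}(s)$ is graded; and for $z \in L$ one has $f[z,x] = [f(z), f(x)] \in s$ because $s$ is an ideal, whence $[z,x] \in f^{-1}(s)$.

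Next I would establish the bijection of (3). The two set-theoretic identities to check are $f(f^{-1}(s)) = s$ and $f^{-1}(f(a)) = a$. The first holds because $f$ is surjective. The second is where the hypothesis $\ker f \subset a$ is used: the inclusion $a \subset f^{-1}(f(a))$ is automatic, and conversely if $f(x) \in f(a)$ then $f(x) = f(y)$ for some $y \in a$, so $x - y \in \ker f \subset a$, giving $x \in a$. Combined with (1) and (2), these identities show that $a \mapsto f(a)$ and $s \mapsto f^{-1}(s)$ are mutually inverse maps between the graded ideals of $L$ containing $\ker f$ and the graded ideals of $L'$, which is the asserted one-to-one correspondence.

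Finally, for (4) I would form the composite $\pi' \circ f : L \to L'/f(a)$, where $\pi'$ is the natural projection; this is a surjective homomorphism of Klein graded Lie algebras whose kernel is $f^{-1}(f(a)) = a$ by (3). Applying the corollary above to this composite yields $L/a \cong L'/f(a)$. I expect the only point requiring real care to be the grading claims in (1) and (2): the ideal and set-theoretic properties are the standard correspondence-theorem arguments, but one must explicitly invoke the degree-$e$ property of $f$ to guarantee that images and preimages of graded subspaces remain graded, and everything else is routine.
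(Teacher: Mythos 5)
Your proposal is correct and follows essentially the same route as the paper: the same bracket computation $[f(z),f(x)]=f[z,x]$ for parts (1) and (2), the same identities $f^{-1}(f(a))=a$ (using $\ker f\subset a$) and $f(f^{-1}(s))=s$ for part (3), and the same reduction of part (4) to the previously proved first isomorphism theorem via the composite $\pi'\circ f$ with kernel $a$. If anything, you are more careful than the paper on the grading claims, explicitly using the degree-$e$ property of $f$ to verify that $f(a)$ and $f^{-1}(s)$ are graded subspaces, where the paper merely declares the gradation of $f(a)$ ``natural'' and omits the check for $f^{-1}(s)$.
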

\begin{proof}
$\hspace{10pt}$ \newline
\begin{enumerate}
\item 
For any y $\in L'$ and v $\in$ a, we have y = f(x) for some x $\in L$, so
\begin{equation}
[y,f(v)] = [f(x),f(v)]
= f[x,v] \in f(a)
\end{equation}
Hence [$L'$,f(a)] $\subset$ L(a), and $f(a)$ is an ideal of $L'$. The gradation of $f(a)$ comes in a natural way, we set
\[f(a_e) = f(a)_e\]
\[f(a_r) = f(a)_r\]
\[f(a_s) = f(a)_s\]
\[f(a_t) = f(a)_t\]
\item
Let v $\in f^{-1}(s)$. Then for any x $\in L$, we have
\begin{equation}
f[x,v] = [f(x),f(v)] \in [L',s] \subset s,
\end{equation}
so [x,v] $\in f^{-1}(s)$. $Kerf$ of course is in $f^{-1}(s)$ because 0 is in s.
\item
We first claim that if a is an ideal of L containing kerf, then $f^{-1}(f(a))$ =a. Since clearly a $\subset f^{-1}(f(a))$, it suffices to prove that $f^{-1}(f(a)) \subset a$. But if x $\in f^{-1}(f(a))$, then f(x) = f(v) for some v $\in$ a, so x - v $\in kerf$, and hence x $\in$ v + $kerf \subset$ a + a = a. Next, it is clear from the surjectivity of $f$ that if A is any subset of $L'$, then $f(f^{-1}(A))$ = A. Thus, in particular, if s is an ideal of $L'$, then $f(f^{-1}(s))$ = s.
\item
Consider the following diagram of homomorphisms
\[
\begin{CD}
L @>f>> L'\\
\pi @VVV  @VV\pi ' V\\
L /a @>g>> L' / f(a)
\end{CD}
\]
where $\pi$ and $\pi '$ are projections. Now $\pi \circ f$  is a homomorphism of L onto
$L ' /f(a)$ whose kernel is $f^{-1}(f(a))$ = a. Hence, by the previous theorem and its corollary, there is a isomorphism g from
L/a onto $L'/f(a)$ such that $g \circ \pi = \pi ' \circ f$.
\end{enumerate}
\end{proof}
The propositions and theorems in this section are also true for a Lie superalgebra. As you can see  the bracket or the grading of the algebra are not involved in the proof.
\subsection{Lie superalgebra}

Let $L=L_0 \bigoplus L_1$ be a Lie superalgebra. Then it's clear than $L_0$ is a Lie algebra. In $L_0$ we have that
\[
[x,y] = -[y,x]
\]
\[
[x, [y, z]] + [y, [z, x]] + [z, [x, y]] = 0
\]
for all x,y,z in $L_0$.

Since $L_0$ is a Lie algebra, and the fact that the grading of L give us the rule $[L_0,L_1] \subset L_1$, we notice that $L_1$ can be considered as a module of $L_0$.
\subsection{Klein graded Lie algebra}
Let $L= L_e \oplus L_r \oplus L_s \oplus L_t$ be a Klein graded Lie algebra. This case is more complicated, and we have more interesting substructures here.
\subsubsection{Lie algebras}
The subspace $L_e$ is a Lie Algebra no matter what color we use. This is true, $L_e$ is a vector space, the restriction of L-bracket on $L_e$  satisfies the bilinearity, alternating and the jacobi identity needed for a Lie Algebra.
The same fact holds for the entire algebra L, if we us the color $\theta_1$.This color is trivial so it's easy to see that L with the color $\theta_1$ is a Lie algebra.  Lie algebras can also be found in case that the color we use isn't $\theta _1$, i.e. the  subalgebras $L_e \bigoplus L_r$, $L_e \bigoplus L_s$, $L_e \bigoplus L_t$ are Lie algebras if we use $\theta _2$ (L isn't a Lie algebra this time). In case of $\theta _3$ and $\theta _4$, $L_e \bigoplus L_t$ is a Lie algebra.
\subsubsection{Lie superalgebras}\label{Liesuperalgebras}
We can find Lie superalgebra structure on certain subalgebras of a $(\mathcal{K},\theta)$-Graded Lie Algebra. In more details, we have the following superalgebras depending on the color we choose:
\newline Case of $\theta_3$ color: The subalgebras $L_e \bigoplus L_r$ ,  $L_e \bigoplus L_s$, are superalgebras.
\newline Case of $\theta_2$ color: The subalgebras $L_e \bigoplus L_r$ ,  $L_e \bigoplus L_s$, are superalgebras.
\newline On both cases, they are superalgebras because the respective colors are the same as a $\mathbb{Z}_2$-Graded Lie algebra requires.

\subsubsection{Module substructure}
Since $L_e$ is a Lie algebra, and the fact that the grading of L give us the rules 
\[[L_e,L_r] \subset L_r , [L_e,L_s] \subset L_s , [L_e,L_t] \subset L_t \]
 we notice that $L_r, L_s, L_t$ can be considered as a module of $L_e$. More genericl, every subset of L that does not contain elements of $L_e$ is a $L_e$-module.

Except $L_e$ modules, we can see that
\[[L_r,L_s] \subset L_t , [L_r,L_t] \subset L_s \]
hence
\[[L_e \oplus L_r, L_s \oplus L_t] \subset L_s \oplus L_t\]
so $ L_s \oplus L_t$ is an $L_e \oplus L_r$ -module. $L_r$ was randomly selected, we also have $ L_e \oplus L_s$ and $ L_e \oplus L_t$ modules in the same way.

\section{General Linear Algebra}\label{gla}

If $V$ be a Klein graded vector space then $End(V)$ is a K-Graded Lie Algebra. That's it:
\begin{equation}
End(V) =  \{ h:V \to V / h: \text{linear map}\}
\end{equation}
We can define a grading on $End(V)$ as the following one:
\begin{equation}\label{endvdefinition}
End(V)_a =  \{ h:V \to V / h(V_b) \subset V_{ab}, b \in \mathcal{K} \}
\end{equation}
for all a in $\mathcal{K}$.

There is a natural way of defining a bracket [ , ] in the algebra $End(V)$  by the equality:
\begin{equation}\label{endvsymmetric}
[A,B] = AB - \theta(a,b)BA
\end{equation}
For an associative algebra($End(V)$ is one of that kind) we have the following important identity:
\begin{equation}\label{endvjacobi}
[A,BC] = [A,B]C + \theta(a,b)B[A,C]
\end{equation}

\begin{proof}

\begin{eqnarray}
 [A,B]C + \theta(a,b)B[A,C] &=& (AB - \theta(a,b)BA)C + \theta(a,b)B(AC + \theta(a,c)CA) \nonumber \\
&=& ABC + \theta(a,b)\theta(a,c)BCA=ABC + \theta(a,bc)BCA  \nonumber \\
&=& [A,BC]
\end{eqnarray}
\end{proof}
In case that we have a $Z_2$-graded vector space, then $End(V)$ is a Lie superalgebra and the relations \eqref{endvdefinition} , \eqref{endvsymmetric}, \eqref{endvjacobi} switch to
\begin{equation}
End(V)_a =  \{ h:V \to V / h(V_b) \subset V_{a+b}, b \in \mathbb{Z}_2 \}
\end{equation}
\begin{equation}
[A,B] = AB - (-1)^{ab}BA
\end{equation}
\begin{equation}
[A,BC] = [A,B]C + (-1)^{ab}B[A,C]
\end{equation}
respectivly.

It's easy but usefull to see the elements of this algebra. The $A_{ij}$ in each case, are matrices of same dimensions as the dimensions of the respective vector spaces  i.e. if we number e,r,s,t $\to$ 1,2,3,4 we can say that  $A_{ij}$ is a $(dim(V_i) \times dim(V_j))$ matrix. In case of Klein grading we have that elements of $End(V)_e$ have the following form:
\[\begin{bmatrix}
A_{11} & 0 & 0 & 0 \\
0 & A_{22} & 0 & 0 \\
0 & 0 & A_{33} & 0 \\
0 & 0 & 0 &  A_{44}\\
\end{bmatrix}\]
\newline elements of $End(V)_r$ have the following form:
\[\begin{bmatrix}
0 & A_{12} & 0 & 0 \\
A_{21} & 0 & 0 & 0 \\
0 & 0 & 0 & A_{34}\\
 0 & 0 & A_{43} &  0\\
\end{bmatrix}\]
\newline elements of $End(V)_s$ have the following form:
\[\begin{bmatrix}
0 & 0 & A_{13} & 0 \\
0 & 0 & 0 & A_{24} \\
A_{31} & 0 & 0 & 0 \\
0 & A_{42} & 0 &  0\\
\end{bmatrix}\]
\newline and  elements of $End(V)_t$ have the following form:
\[\begin{bmatrix}
0 & 0 & 0 & A_{41} \\
0 & 0 & A_{23} & 0 \\
0 & A_{32} & 0 & 0 \\
A_{41} & 0 & 0 & 0\\
\end{bmatrix}\]
In case of $\mathbb{Z}_2$-grading we have that  elements of $End(V)_0$ have the following form
\[\begin{bmatrix}
A_{11} & 0 \\
0 & A_{22}\\
\end{bmatrix}\]
and the elements of $End(V)_1$ have the following form
\[\begin{bmatrix}
0 & A_{12} \\
A_{21} &0\\
\end{bmatrix}\]

The graded algebra constructed this way will be denoted by $pl(V)$ and will be called general linear Lie Klein graded algebra or (general linear Lie superalgebra) of V. It plays the same role as the Lie algebra gl(V) does in the theory of Lie algebras. Now we can define

\begin{definition}
A graded representation of a Klein graded Lie algebra(resp. Lie superalgebra) L in a Klein graded(resp. $\mathbb{Z}_2$-graded) vector space V is an homomorphism of L into pl(V).
\end{definition}
\section{Derivations}\label{derivations}
\subsection{Derivations}

Denote with $\mathcal{K}$ the Klein group, then a derivation of degree s, s $\in \mathcal{K}$, of a Klein Graded algebra L is an endomorphism $D \in End_L$ with a property known as the Leibniz rule:
\begin{equation}\label{leibinz}
D(AB) = D(A)B + \theta(s,a)AD(B)
\end{equation} 
We denote by $der_sL\subset End_sL$ the space of all derivations of degree s, and we set $derL = \bigoplus_{s \in K} der_sL$. The space $derL \subset EndL$ is seen to be closed under the bracket:
\begin{proof}
Consider two derivations of degree s and t $d_1, d_2 \in derL$, we will show that the product $[d_1,d_2]$ is a derivation too. 
\begin{center}
 $[d_1,d_2](AB) = (d_1d_2 - \theta(s,t)d_2d_1)(AB) = d_1(d_2(A)) + \theta(s,d_2(A))d_2(A)d_1(B) +\theta(t,a)(d_1(A)d_2(B) + \theta(s,a)Ad_1d_2(B)) -\theta(s,t)(d_2d_1(A)B + \theta(t,d(A))d_1(A)d_2(B) + \theta(s,a)(d_2(A)d_1(B) + \theta(t,a)Ad_2d_1(B)))$
= $[d_1,d_2](A)B  + \theta(st,a)A[d_1,d_2](B)$.
\end{center}
so, it is a derivation of degree st.
\end{proof}
In other words it is a subalgebra of $EndL$; it is called the algebra of derivations of L. Every element of derL is called derivation of L.
\begin{example}
Let L be a Klein graded Lie Algebra. It follows from the jacobi identity that $ad_A:B \to [A,B]$ is a derivation of L. These derivations are called inner; they form an ideal of derL, because$ [D, ad_A] = ad_{DA}$ for all D $\in derL$.
\begin{proof}
We will show that ad is a derivation using the jacobi identity.
\begin{center}
$0=\theta(z,x)[X,[Y,Z]] + \theta(y,z)[Z,[X,Y]] + \theta(x,y)[Y,[Z,X]]\Leftrightarrow$
\newline $-\theta(z,x)ad_X[Y,Z] = \theta(y,z)[Z,ad_X Y] + \theta(x,y)[Y,-\theta(z,x)ad_X Z]\Leftrightarrow$
\newline $-\theta(z,x)ad_X[Y,Z] = -\theta(y,z)\theta(z,ad_XY)[ad_XY,Z]-\theta(x,y)\theta(z,x)[Y,ad_XZ]\Leftrightarrow$
\newline $ad_X[Y,Z] = \theta(z,x)\theta(y,z)\theta(z,ad_Y)[ad_XY,Z] +\theta(x,y)[Y,ad_XZ]$
\end{center}
It is enough now to show that $\theta(z,x)\theta(y,z)\theta(z,ad_Y) =1$. This is true: 
\begin{center}
$\theta(z,x)\theta(y,z)\theta(z,ad_Y) = \theta(z,XYad_XY)$, and the element $XYad_XY \in L_e.$ We recall that $\theta(a,e) =1, \forall a \in K$
\end{center}
The same holds for a Lie superalgebra and the proof is pretty much the same.
\end{proof}
\end{example}

\begin{remark}

The adjoint representation ad of a Lie superalgebra $L=L_0 \bigoplus L_1$ induces a representation of the Lie algebra $L_0$ in the odd subspace $L_1$. This representation is called the adjoint representation of $L_0$ in $L_1$. Using this representation we can give a new description of Lie superalgebras, as follows.

Let $L=L_0 \bigoplus L_1$ be a Lie superalgebra. For convenience we shall write for the moment $\tilde{Q}$ instead of adQ, Q $\in L_0$, hence
\begin{equation}
\tilde{Q}(U) = [Q,U]=-[U,Q] \text{ for all } Q \in L_0, U \in L_1
\end{equation}
The Lie superalgebra L is uniquely fixed if we are given the Lie algebra $L_0$, the representation $Q \to \tilde{Q}$ of $L_0$ in $L_1$ and the symmetric bilinear product mapping
\begin{equation}
P : L_1 \times L_1 \to L_0
\end{equation}
The graded jacobi identity impLies that P is $L_0$ invariant, i.e. that
\begin{equation}
[Q,P(U,V)] = P(\tilde{Q}(U),V) + P(U,\tilde{Q}(V))
\end{equation}
\begin{center}
for all Q $\in L_0$, U,V $\in L_1$,
\end{center}
and that, furthermore,
\begin{equation}\label{jacobilike}
\tilde{P(U,V)}(W) + \tilde{P(V,W)}(U) + \tilde{P(W,U)}(V) = 0
\end{equation}
\begin{center}
for all W, U, V $\in L_1$.
\end{center}
Conversely, let $L_0$ be a Lie superalgebra and let $Q \to \tilde{Q}$ be a representation of $L_0$ in some vector space $L_1$. Suppose we are given a summetric bilinear mapping P of $L_1 \times L_1$ to $L_0$. We define a multiplication [ , ] on the $\mathbb{Z}_2$-graded vector space $L_0 \bigoplus L_1$ by
\begin{equation}
[Q,R] = [Q,R] \text{   if Q,R } \in L_0
\end{equation}
\begin{equation}
[Q,U] = -[U,Q] = \tilde{Q}(U) \text{   if Q } \in L_0 , U \in L_1
\end{equation}
\begin{equation}
[U,V] = P(U,V) \text{   if U,V } \in L_1
\end{equation}
With this multiplication $L_0 \bigoplus L_1$ is a Lie superalgebra if and only if P is $L_0$-invariant and if \eqref{jacobilike} is fulfilled. In a sense, therefore, a Lie superalgebra $L=L_0 \bigoplus L_1$ is some "superstructure" to be built over the Lie algebra $L_0$.
\end{remark}
One will wonder if we can say the same for a Klein graded Lie algebra, and we will show that it is more complicated but possible.
\begin{proposition}\label{53prop}
Let $L = L_e \bigoplus L_r \bigoplus L_s \bigoplus L_t$ be a Klein graded Lie algebra. We will denote $ad^{i}_{Q}$ the representation of $L_e$ in $L_i, i \in \{r,s,t\}$, hence
\begin{equation}
ad^{i}_{Q}(U) = [Q,U] = -[U,Q] \text{ for all } Q \in L_e, U \in L_i
\end{equation}
The Klein graded Lie algebra L is uniquely fixed if we are given the Lie algebra $L_e$, the representations $ad^{r}_{Q}$, $ad^{s}_{Q}$,$ad^{t}_{Q}$ of $L_e$ in $L_r$,$L_s$,$L_t$ respectively and the symmetric bilinear product mappings
\begin{equation}
P_{xy} : L_x \times L_y \to L_{xy} \text{ for all } x,y \in \mathcal{K} -\{e\}
\end{equation}
The graded Jacobi identity impLies that these mappings are $L_e$-invariant, i.e. that
\begin{equation}
[Q,P_{ij}(I,J)] = P_{ij}(ad^{i}_{Q}(I),J) + P_{ij}(I,ad^{j}_{Q}(J))
\end{equation}
\begin{center}
for all Q $\in L_e$, I $\in L_i$, J $\in L_j$, i,j $\in$ \{r,s,t\}
\end{center}
or
\begin{equation}\label{kleinjacobilike}
0 = \theta(a,c)P_{bca}(A,P_{bc}(B,C)) + \theta(a,b)P_{cab}(B,P_{ac}(A,C)) + \theta(b,c)P_{abc}(C,P_{ab}(A,B))
\end{equation}
\begin{center}
for all A $\in L_a$, B $\in L_b$, C $\in L_c$, a,b,c $\in \mathcal{K} -\{e\}$.
\end{center}
Conversely, let $L_e$ be a Lie algebra and let $ad^{r}_{Q}$, $ad^{s}_{Q}$,$ad^{t}_{Q}$ be three representations of $L_e$ in some vector spaces $L_r$,$L_s$,$L_t$ respectively. Suppose we are given 6 symmetric bilinear mappings $P_{xy}$ of $ L_x \times L_y$ into $ L_{xy}$ and a color $\theta$ defined on the Klein group $\mathcal{K}$. We define multiplication [ , ] on the Klein-graded vector space $ L_e  \bigoplus L_r \bigoplus L_s \bigoplus L_t $ by
\begin{equation}
[Q,R] = [Q,R] \text{   if Q,R } \in L_e.
\end{equation}
\begin{equation}
[Q,A] = -[A,Q] = ad^{a}_{Q}(A) \text{   if Q } \in L_0 , U \in L_a, a \in \mathcal{K} -\{e\}.
\end{equation}
\begin{equation}
[A,B] = P_{ab}(A,B) \text{   if A} \in L_a, B \in L_b, a,b  \in \mathcal{K} -\{e\}.
\end{equation}
With this multiplication $ L_e  \oplus L_r \oplus L_s \oplus L_t $ is a Klein graded Lie algebra if and only if $P_{xy}$ are $L_e$-invariant and if \eqref{kleinjacobilike} is fulfilled. 
\end{proposition}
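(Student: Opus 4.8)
The plan is to mirror the strategy used in the preceding Remark for Lie superalgebras, now over the Klein group $\mathcal{K}=\{e,r,s,t\}$ and keeping careful track of the nontrivial color $\theta$. The first step is to observe that, by bilinearity, the bracket of $L$ is completely determined by its restrictions to the homogeneous products $L_a\times L_b$, and to sort these restrictions into three families. The restriction to $L_e\times L_e$ is the Lie bracket of the Lie algebra $L_e$. The restrictions to $L_e\times L_a$ and $L_a\times L_e$ for $a\in\{r,s,t\}$ are both recovered from the single representation $ad^a_Q$, since $\theta(e,a)=1$ makes graded skew-symmetry read $[Q,A]=-[A,Q]$. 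Finally the restrictions to $L_a\times L_b$ for $a,b\neq e$ are by definition the maps $P_{ab}$. As these exhaust every homogeneous product, the data $(L_e,ad^r,ad^s,ad^t,\{P_{xy}\})$ together with $\theta$ pins down the bracket, which proves the ``uniquely fixed'' assertion and at the same time supplies the candidate bracket for the converse.

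Granting this, both the forward conditions and the converse reduce to the single claim that graded skew-symmetry and the graded Jacobi identity for $L$ are equivalent to the conditions listed in the statement. Graded skew-symmetry is automatic: the $L_e\times L_e$ block is skew because $L_e$ is a Lie algebra, the mixed blocks are skew by the very definition $[Q,A]=-[A,Q]=ad^a_Q(A)$, and the $P_{xy}$ are required to satisfy $P_{xy}(A,B)=-\theta(x,y)P_{yx}(B,A)$, which specialises to the stated symmetry. The real content is the Jacobi identity
\begin{equation*}
\theta(a,c)[A,[B,C]]+\theta(a,b)[B,[C,A]]+\theta(b,c)[C,[A,B]]=0,
\end{equation*}
and I would verify it on homogeneous $A\in L_a,\ B\in L_b,\ C\in L_c$ by organising the computation according to how many of $a,b,c$ equal $e$, using the cyclic symmetry of the identity to cut down the configurations. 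When all three equal $e$ the identity is exactly the ordinary Jacobi identity of $L_e$; when exactly two equal $e$ it collapses, after cancelling the trivial color factors, to $ad^c_{[A,B]}=[ad^c_A,ad^c_B]$, i.e. to the representation property; when exactly one equals $e$ (say $a=e$), substituting the definitions and using $\theta(e,b)=\theta(e,c)=1$ together with $P_{bc}(X,C)=-\theta(b,c)P_{cb}(C,X)$ reduces it to the $L_e$-invariance $[Q,P_{bc}(B,C)]=P_{bc}(ad^b_Q(B),C)+P_{bc}(B,ad^c_Q(C))$; and when none equals $e$ it becomes precisely \eqref{kleinjacobilike}.

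The main obstacle lies in the last case, where two difficulties coincide. First, the inner product $P_{bc}(B,C)$ has degree $bc$, which equals $e$ whenever two of the three degrees agree (for instance $B,C\in L_r$); in that event the outer bracket $[A,P_{bc}(B,C)]$ is not a $P$-map at all but a representation action $ad^a$, so the symbol $P_{bca}$ appearing in \eqref{kleinjacobilike} must be read as the correct product in each degenerate sub-configuration, and these sub-cases have to be checked separately from the generic one in which $a,b,c$ are distinct. Second, essentially all of the color bookkeeping is concentrated here: confirming that the three cyclic terms emerge with exactly the coefficients $\theta(a,c),\theta(a,b),\theta(b,c)$ requires repeated use of the Klein-group color identities $\theta(a,b)=\theta(b,a)=\pm1$, $\theta(a,b)^2=1$ and $\theta(ab,ab)=\theta(a,a)\theta(b,b)$ derived earlier, in the same spirit as the verification that $ad_X$ is a derivation. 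These coefficient manipulations are routine but error-prone, and once they are discharged in every degree configuration both implications follow, yielding the claimed equivalence and hence the proposition.
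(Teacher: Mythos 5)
You should know at the outset that the paper offers no proof of Proposition \ref{53prop} at all: it is stated bare, with the reader implicitly expected to transpose the preceding superalgebra remark (itself taken from Scheunert without proof) to the Klein-graded setting. Your proposal carries out exactly that transposition, and it is correct. The reduction of the bracket to its homogeneous blocks settles uniqueness, and your case division of the graded Jacobi identity by how many of $a,b,c$ equal $e$ — yielding in turn the Jacobi identity of $L_e$, the representation property $ad^c_{[Q,R]}=[ad^c_Q,ad^c_R]$ (which in the converse holds by hypothesis, since the $ad^i$ are assumed to be representations), the $L_e$-invariance of the $P_{xy}$, and \eqref{kleinjacobilike} — is the natural argument; your appeal to cyclic symmetry is legitimate precisely because the paper has already shown $\theta(a,b)=\theta(b,a)$ on the Klein group. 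The two subtleties you flag are genuine, and in fact quietly repair imprecisions in the statement itself: the diagonal maps $P_{aa}$ are symmetric only when $\theta(a,a)=-1$, while for $\theta(a,a)=+1$ graded skew-symmetry forces them antisymmetric, so your $\theta$-twisted condition $P_{xy}(A,B)=-\theta(x,y)P_{yx}(B,A)$ is the correct hypothesis where the paper loosely says "symmetric"; and when $bc=e$ the symbol $P_{bca}$ in \eqref{kleinjacobilike} is undefined among the six given maps and must be read as the action $-ad^a$, which in the fully diagonal case $a=b=c$ reproduces the Klein analogue of the superalgebra condition \eqref{jacobilike}. Nothing in your outline would fail; what remains is only the routine coefficient verification you already identify, using $\theta(a,b)^2=1$ and the symmetry of $\theta$.
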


\subsection{Solvability and nilpotency}
Regarding the solvability and nilpotency definitions, they are the same as for a Lie algebra. Engel's theorem and its direct consequences remain valid, and the proof is the same as for Lie algebra \cite{humphreys}.

\begin{definition}
A Lie superalgebra(resp. a Klein graded Lie algebra) L is nilpotent if there exists a  positive integer k such that
\[ [L,L_k] = 0 \; \; \; ; \; \; \; L_k = [L,L_{k-1}]\].
\end{definition}

\begin{proposition}\label{nil1}
Let $V\neq \{0\}$ be a $Z_2$ graded vector space(resp. a Klein graded vector space) and let $L$ be a graded subalgebra $pl(V)$ such every homogeneous element of $L$ is nilpotent. Then there exists a non-zero element $v \in V$ such that $X(v) = 0$ for all $X \in L$.
\end{proposition}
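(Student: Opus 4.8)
The plan is to run the classical Engel argument (as in Humphreys) by induction on $\dim L$, the statement being understood for graded subalgebras of $pl(U)$ over an \emph{arbitrary} graded vector space $U$, but with two places where the color must be watched. Throughout, for a graded subspace $M\subseteq pl(V)$ write $W(M)=\{v\in V: Xv=0\text{ for all }X\in M\}$. Since $M$ is spanned by homogeneous elements and a homogeneous $X$ of degree $a$ sends $V_g$ into $V_{ag}$, the subspaces $V_{ag}$ for distinct $g$ are independent, so the condition $Xv=0$ splits over homogeneous components; hence $W(M)$ is always a graded subspace, and if it is non-zero it contains a non-zero homogeneous vector. The same remark applies to common null spaces inside quotients such as $L/K$.

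The first step, and the point where the color genuinely enters, is the lemma: if $X\in pl(V)$ is homogeneous of degree $a$ and nilpotent as an endomorphism of $V$, then $\operatorname{ad}_X$ is nilpotent on $pl(V)$. I would prove this by showing, for homogeneous $Y$ of degree $b$ and by induction on $k$, that $\operatorname{ad}_X^{\,k}(Y)=\sum_{i=0}^{k}c_{k,i}\,X^{i}YX^{k-i}$ for suitable scalars $c_{k,i}$. Each summand is homogeneous of the single degree $a^{k}b$ (the group is abelian), so applying $\operatorname{ad}_X$ once more sends $X^{i}YX^{k-i}$ to $X^{i+1}YX^{k-i}-\theta(a,a^{k}b)\,X^{i}YX^{k-i+1}$, where the color factor $\theta(a,a^{k}b)$ is independent of $i$; this position-independence is exactly what makes the ungraded computation survive. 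If $X^{n}=0$, then for $k\ge 2n-1$ every monomial $X^{i}YX^{k-i}$ has $i\ge n$ or $k-i\ge n$ and so vanishes, giving $\operatorname{ad}_X^{\,2n-1}=0$. Restricting to $L$ makes $\operatorname{ad}_X$ nilpotent on $L$ for every homogeneous $X\in L$.

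For the inductive step, fix a maximal proper graded subalgebra $K$ of $L$ (one exists as $\dim L\ge1$; the base case $\dim L=0$ is immediate since $V\neq\{0\}$). Then $K$ acts through $\operatorname{ad}$ on the non-zero graded space $L/K$, the image being a graded subalgebra of $pl(L/K)$ of dimension $\le\dim K<\dim L$ whose homogeneous elements act nilpotently by the lemma. The induction hypothesis yields a non-zero homogeneous $\bar y\in L/K$ with $[x,y]\in K$ for all $x\in K$; lift it to a homogeneous $y\in L\setminus K$. I then identify the graded subalgebra generated by $K$ and $y$: the Leibniz rule gives $[K,[y,y]]\subseteq K$, while a direct computation gives $[y,[y,y]]=0$ and $[[y,y],[y,y]]=0$, so this subalgebra is exactly $K+\mathbb{K}y+\mathbb{K}[y,y]$. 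It properly contains $K$, hence equals $L$ by maximality. Finally the induction hypothesis applied to $K\subseteq pl(V)$ gives $W(K)\neq\{0\}$; writing $a,b$ for the degrees of $x,y$, the defining identity $xy=[x,y]+\theta(a,b)\,yx$ of the bracket of $pl(V)$ gives $x(yv)=[x,y]\,v+\theta(a,b)\,y(xv)$, and with $xv=0$ and $[x,y]\in K$ this shows $y$ maps $W(K)$ into itself. As $y$ is nilpotent there is $0\neq w\in W(K)$ with $yw=0$; then $[y,y]\,w=2y^{2}w=2y(yw)=0$, so $w$ is annihilated by $K$, by $y$, and by $[y,y]$, hence by all of $L=K+\mathbb{K}y+\mathbb{K}[y,y]$.

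The step I expect to be the real obstacle — and the one the slogan ``the proof is the same as for a Lie algebra'' conceals — is the appearance of $[y,y]=2y^{2}$ when $\theta(a,a)=-1$ (the odd case, present both for $\mathbb{Z}_2$ and for Klein gradings): here $K+\mathbb{K}y$ need not be a subalgebra, so the usual codimension-one conclusion fails outright. The remedy is the observation above that $[y,y]$ still normalizes $K$ and that the vector $w$ extracted from $W(K)\cap\ker y$ is \emph{automatically} killed by $[y,y]=2y^{2}$, which is what lets the argument close even though $L$ is only $K+\mathbb{K}y+\mathbb{K}[y,y]$. Since the whole argument uses nothing about $\theta$ beyond $\theta(a,b)=\pm1$, $\theta(a,e)=\theta(e,e)=1$ and $\theta(a,a)=\pm1$, it covers the Lie superalgebra and Klein graded cases uniformly.
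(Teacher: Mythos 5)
Your proposal is correct, but it cannot be compared against the paper's own proof in the usual sense, because the paper gives none: for Proposition \ref{nil1} it only remarks that a proof ``can be constructed'' from the corresponding result for Lie color algebras in \cite{zhang2}. What you have written is, in effect, the argument that citation points to: the classical Engel induction of \cite{humphreys}, repaired at exactly the two color-sensitive spots. Your lemma that a nilpotent homogeneous $X$ of degree $a$ has nilpotent $\operatorname{ad}_X$ is sound, and the mechanism you isolate is the right one: each monomial $X^iYX^{k-i}$ is homogeneous of the single degree $a^k b$ (the group being abelian), so the color enters only through scalars $\theta(a,a^k b)$ independent of the position $i$, and $X^n=0$ forces $\operatorname{ad}_X^{\,2n-1}=0$. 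Likewise your repair of the codimension-one step is the standard and correct one: $K+\mathbb{K}y$ need not be a subalgebra when $\theta(a,a)=-1$, but $K+\mathbb{K}y+\mathbb{K}[y,y]$ is (using $[K,y]\subseteq K$, the Leibniz rule for $[K,[y,y]]$, and $[y,[y,y]]=0=[[y,y],[y,y]]$), hence equals $L$ by maximality, and the vector $w\in W(K)\cap\ker y$ is then automatically annihilated by $[y,y]$. The remaining bookkeeping also checks out: gradedness of common null spaces via injectivity of $g\mapsto ag$, the homogeneous lift of the null vector of $L/K$, and the two applications of the induction hypothesis (to the image of $K$ in $pl(L/K)$ and to $K\subseteq pl(V)$, both of dimension $<\dim L$) are all legitimate.

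One small reading note: the chain $[y,y]w=2y^2w=0$ is literally valid only when $\theta(a,a)=-1$; when $\theta(a,a)=1$ one has $[y,y]=0$ outright (it is \emph{not} $2y^2$ then), so the conclusion holds trivially. Your closing paragraph shows you are aware of this, but the displayed computation should carry that case split explicitly. With that trivial amendment, your argument is complete, covers the $\mathbb{Z}_2$ and Klein cases uniformly (it uses only $\theta(a,b)=\pm1$ and $\theta(a,e)=1$), and is more self-contained than the paper, which leaves the proposition entirely to the literature.
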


\begin{corollary}
A Lie superalgebra(resp. a Klein graded Lie algebra) $L$ is nilpotent if and only if $ad_LX$ is nilpotent for every homogeneous element $X$ of $L$.
\end{corollary}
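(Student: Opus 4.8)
The plan is to prove the two implications separately, the nontrivial one resting on Proposition \ref{nil1}, which is the graded form of Engel's theorem.

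First I would dispose of the easy direction. Write $C^0(L) = L$ and $C^{j}(L) = [L, C^{j-1}(L)]$ for the terms of the lower central series (these are the $L_k$ of the preceding definition), so that nilpotency of $L$ means $C^k(L) = 0$ for some $k$. For a homogeneous $X$ and any $Y \in L$ one has $ad_X(Y) = [X,Y] \in C^1(L)$, and inductively $(ad_X)^{j}(Y) = [X,(ad_X)^{j-1}(Y)] \in C^{j}(L)$. Hence $(ad_X)^{k} = 0$, so every $ad_X$ is nilpotent. This direction uses only the definition of the lower central series and works identically in the superalgebra and Klein cases.

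For the converse I would argue by induction on $\dim L$. Assume $ad_X$ is nilpotent for every homogeneous $X$. The adjoint map $ad : L \to pl(L)$ is a homomorphism of graded Lie algebras: if $X \in L_a$ then $ad_X(L_b) = [X, L_b] \subseteq L_{ab}$, so $ad_X \in pl(L)_a$, and the Leibniz rule gives $ad_{[X,Y]} = [ad_X, ad_Y]$. Thus $ad(L)$ is a graded subalgebra of $pl(L)$ all of whose homogeneous elements are the $ad_X$ with $X$ homogeneous, hence nilpotent by hypothesis. Applying Proposition \ref{nil1} with the graded vector space $V := L$ and the subalgebra $ad(L) \subseteq pl(L)$ yields a nonzero $v \in L$ killed by every $ad_X$, that is $[L,v] = 0$, so $\mathcal{Z}(L) \neq 0$. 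The center is moreover a graded ideal: if $z = \sum_g z_g \in \mathcal{Z}(L)$ and $y \in L_b$ is homogeneous, then the components $[z_g, y] \in L_{gb}$ lie in distinct graded pieces, so $[z,y] = 0$ forces each $[z_g, y] = 0$, whence each $z_g$ is central. Therefore $\bar L := L/\mathcal{Z}(L)$ is again a Klein graded (resp. $\mathbb{Z}_2$-graded) Lie algebra, of strictly smaller dimension, and the induced operators $ad_{\bar X}$ are still nilpotent; by the induction hypothesis $\bar L$ is nilpotent, say $C^m(\bar L) = 0$. Pulling back, $C^m(L) \subseteq \mathcal{Z}(L)$, so $C^{m+1}(L) = [L, C^m(L)] \subseteq [L, \mathcal{Z}(L)] = 0$ and $L$ is nilpotent; the case $\dim L \leq 1$ is immediate.

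The main obstacle is not the induction itself, which is the classical Engel argument, but the bookkeeping required to keep everything inside the graded category: one must check that $ad$ respects the grading so that Proposition \ref{nil1} genuinely applies to $ad(L)$, and that $\mathcal{Z}(L)$ is a graded ideal so that $\bar L$ is a bona fide graded Lie algebra on which the induction hypothesis can be invoked. Once these compatibilities are verified, the color and the group enter only through $L_{ab}$ in the degree computation, so the same proof covers the Lie superalgebra and the Klein graded cases without change.
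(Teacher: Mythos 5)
Your proof is correct, and it is worth noting that the paper itself offers no proof of this corollary at all: it merely remarks that one ``can be constructed by using the proof for the same results in \cite{zhang2}'' for Lie color algebras. So your argument fills in an omission rather than diverging from a written proof, and what you supply is exactly the expected one: the easy direction via the lower central series, and for the converse the classical Engel induction on $\dim L$ with Proposition \ref{nil1} (which the paper likewise delegates to \cite{zhang2}) playing the role of Engel's theorem for linear operators. Your graded bookkeeping is sound at both of the points you rightly single out: $ad$ is a homomorphism of degree $e$ into $pl(L)$, and since $ad(L_a)\subseteq pl(L)_a$ and the decomposition $End(L)=\bigoplus_a End(L)_a$ is direct, every homogeneous element of $ad(L)$ is indeed $ad_X$ for homogeneous $X$ --- a half-line you could add to justify that assertion; and $\mathcal{Z}(L)$ is a graded ideal because $g\mapsto gb$ permutes the grading group, so $L/\mathcal{Z}(L)$ is a genuine graded Lie algebra admitting homogeneous lifts, which is what legitimizes the claim that the induced $ad_{\bar X}$ are nilpotent. (Two cosmetic remarks: Proposition \ref{nil1} gives $[X,v]=0$ for all $X$, and you should invoke graded skew-symmetry to pass to $[v,L]=0$, i.e.\ $v\in\mathcal{Z}(L)$ as the paper defines the center; and your final step could be shortened by applying the paper's subsequent corollary --- that the subalgebra in Proposition \ref{nil1} is itself nilpotent --- to $ad(L)\cong L/\mathcal{Z}(L)$, since a central extension of a nilpotent graded Lie algebra is nilpotent, though your self-contained central-quotient induction is equivalent and arguably preferable given the paper proves neither corollary.)
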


A proof for the proposition \ref{nil1} and the corollary can be constructed by using the proof for the same results in \cite{zhang2} for the case of $L$ being a Lie color algebra.

\begin{corollary}
The Lie superalgebra(resp. a Klein graded Lie algebra) $L$ in proposition \ref{nil1} is nilpotent.
\end{corollary}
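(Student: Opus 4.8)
The plan is to deduce this from the preceding corollary, which reduces the nilpotency of $L$ to the nilpotency of all the operators $ad_L X$, $X$ a homogeneous element of $L$. So it is enough to establish the following local statement: if $X \in pl(V)$ is homogeneous and nilpotent as a linear endomorphism of $V$, then $ad_X$ is nilpotent on $pl(V)$, whence its restriction $ad_L X$ to the invariant subspace $L$ is nilpotent as well. Since by the hypothesis of proposition \ref{nil1} every homogeneous element of $L$ acts nilpotently on $V$, proving this claim and then invoking the preceding corollary will finish the argument.

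First I would fix a homogeneous $X$ of degree $x$ with $X^m = 0$ and examine the action of $ad_X$ on a homogeneous $Y \in pl(V)_b$. By the defining bracket \eqref{endvsymmetric} we have $ad_X(Y) = [X,Y] = XY - \theta(x,b)YX$, which is again homogeneous, of degree $xb$. I would then prove by induction on $n$ that
\[
(ad_X)^n(Y) = \sum_{k=0}^{n} c_{n,k}\, X^k Y X^{n-k},
\]
where the coefficients $c_{n,k}$ are integers built from the $\pm 1$ values of $\theta$. The inductive step uses only that, for fixed $n$, every monomial $X^k Y X^{n-k}$ is homogeneous of one and the same degree $x^n b$, independent of $k$; applying $ad_X$ to such a monomial yields $X^{k+1} Y X^{n-k} - \theta(x, x^n b)\, X^k Y X^{n-k+1}$, again of the required shape.

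The payoff is immediate. As soon as $n \ge 2m-1$, each monomial in the sum satisfies $k + (n-k) = n \ge 2m-1$, so $\max(k, n-k) \ge m$ and therefore $X^k = 0$ or $X^{n-k} = 0$. Hence $(ad_X)^n(Y) = 0$ for every homogeneous $Y$, and by linearity $(ad_X)^n = 0$ on all of $pl(V)$. Restricting to the subalgebra $L$ shows $ad_L X$ is nilpotent, and the preceding corollary then gives that $L$ is nilpotent. (Alternatively, one could apply proposition \ref{nil1} repeatedly to build a graded flag on which $L$ acts by strictly triangular operators, but the route above is cleaner.)

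The main obstacle is exactly the point where the classical argument has to be modified. In the ungraded case $ad_X = \lambda_X - \rho_X$ is a difference of two commuting nilpotent operators (left and right multiplication by $X$), which settles nilpotency at once. Here the factor $\theta(x,b)$ depends on the degree of the argument, so $ad_X$ is not a fixed scalar combination of $\lambda_X$ and $\rho_X$ and the ``commuting nilpotents'' shortcut does not apply verbatim. The monomial expansion above is designed precisely to sidestep this: whatever the scalars $c_{n,k}$ turn out to be, each monomial $X^k Y X^{n-k}$ is annihilated once $n$ is large, so the exact values of the $\theta$-factors never enter the conclusion. The same computation works in the Lie superalgebra case with $\theta(x,b)$ replaced by $(-1)^{xb}$.
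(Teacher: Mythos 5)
Your proof is correct, but note how it sits relative to the paper: the paper gives no argument at all for this corollary --- it is stated bare, with only the remark above it that proofs of proposition \ref{nil1} and its corollaries can be constructed following the Lie color algebra case in \cite{zhang2}, where the classical route is the flag argument you mention only in parentheses (iterate proposition \ref{nil1} on the graded annihilated subspace and its quotients to get a chain $0=V_0\subset V_1\subset\dots\subset V_n=V$ of graded subspaces with $LV_i\subset V_{i-1}$, so that $L$ embeds in the nilpotent algebra of strictly triangular transformations). You instead route the proof through the immediately preceding Engel-type corollary, which requires exactly the lemma you prove: a homogeneous $X\in pl(V)$ with $X^m=0$ satisfies $(ad_X)^{2m-1}=0$. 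Your handling of the graded twist is the right one: since for fixed $n$ every monomial $X^kYX^{n-k}$ has the single degree $x^nb$, the $\theta$-factors remain scalars and the induction preserves the shape $\sum_{k} c_{n,k}\,X^kYX^{n-k}$, after which the pigeonhole observation that $\max(k,n-k)\geq m$ whenever $n\geq 2m-1$ kills every term regardless of the coefficients; and you correctly supply the small but necessary point that $L$, being a subalgebra, is $ad_X$-invariant, so $ad_LX$ is the restriction of a nilpotent operator. The trade-off between the two routes: the flag argument is self-contained given proposition \ref{nil1} and yields the stronger triangularization statement, while your reduction is shorter and makes explicit the logical order in which the paper lists the two corollaries (the Engel criterion first, this statement second), which is presumably the intended derivation; in that sense your write-up fills a genuine gap in the paper rather than duplicating an existing proof.
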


\begin{definition}
A Lie superalgebra(resp. a Klein graded Lie algebra) L is solvable if there exists a  positive integer k such that
\[ [L^k,L^k] = 0 \; \; \; ; \; \; \; L^k = [L^{k-1},L^{k-1}]\]
\end{definition}

On the other hand, Lie's theorem does not necessarily hold for a solvable Lie superalgeba. Moreover, Zhang in \cite{zhang2} shown that it hold for a special case of Lie color algebras (for those which the group of the gradation is torsion free) and an example is given to show that it fails on the case of a color Lie algebra with a torsion group. Kac \cite{kac2} has investigated the finite-dimensional irreducible graded representations of solvable Lie superalgebras and the following are some of his results.

\begin{proposition}
A Lie superalgebra $L$ is solvable if and only if its Lie $L_0$ is solvable.
\end{proposition}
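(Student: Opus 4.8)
The plan is to prove the two implications separately; the forward implication is routine, and the converse carries all of the content.

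First I would treat ``$L$ solvable $\Rightarrow L_0$ solvable''. Since $[L_0,L_0]\subseteq L_0$, the even part $L_0$ is an ordinary Lie subalgebra of $L$, and I would show by induction on $k$ that the derived series of $L_0$ sits inside that of $L$, i.e. $(L_0)^{(k)}\subseteq L^{(k)}$. The base case is $(L_0)^{(0)}=L_0\subseteq L$, and if $(L_0)^{(k)}\subseteq L^{(k)}$ then $(L_0)^{(k+1)}=[(L_0)^{(k)},(L_0)^{(k)}]\subseteq[L^{(k)},L^{(k)}]=L^{(k+1)}$. Hence $L^{(k)}=0$ forces $(L_0)^{(k)}=0$, so $L_0$ is solvable.

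For the converse I would first record that $L^{(k)}=(L^{(1)})^{(k-1)}$, so $L$ is solvable iff the ideal $[L,L]$ is, which lets me work with the derived series and its homogeneous decomposition. Writing $L^{(k)}=A_k\oplus B_k$ with $A_k\subseteq L_0$ and $B_k\subseteq L_1$, the inclusions $[L_0,L_0],[L_1,L_1]\subseteq L_0$ and $[L_0,L_1]\subseteq L_1$ give the recursion $A_{k+1}=[A_k,A_k]+[B_k,B_k]$ and $B_{k+1}=[A_k,B_k]$. Each $A_k$ is the even part of the sub-superalgebra $L^{(k)}$, hence a subalgebra of the solvable algebra $L_0$ and therefore solvable. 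The whole problem thus reduces to showing that the odd components $B_k$ eventually vanish: once $B_{k_0}=0$, the recursion yields $B_k=0$ and $A_{k+1}=[A_k,A_k]$ for all $k\ge k_0$, so $A_{k_0+j}=(A_{k_0})^{(j)}$ terminates by solvability of $A_{k_0}$, and $L^{(k)}=0$ for large $k$. To kill the odd parts I would invoke Lie's theorem: $L_1$ is a finite-dimensional module over the solvable Lie algebra $L_0$ via $Q\mapsto \mathrm{ad}_Q$, so after extending scalars to $\overline{K}$ (which does not affect solvability) the standard corollary of Lie's theorem shows that $[L_0,L_0]$ acts on $L_1$ by nilpotent endomorphisms. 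Its image in $\mathrm{End}(L_1)$ is a Lie algebra of nilpotent operators, so by Engel's theorem there is an $N$ (one may take $N=\dim L_1$) such that any product of $N$ of them annihilates $L_1$; reading $B_{k+1}=\mathrm{ad}(A_k)B_k$ as repeated application of such operators then forces $B_k=0$ for large $k$.

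The hard part is exactly the interface between these two steps: the even terms $A_k$ carry the contributions $[L_1,L_1]$ (and later $[B_j,B_j]$), which need not lie in $[L_0,L_0]$, where the nilpotent action is guaranteed. A complete proof must therefore show that \emph{all} the even operators actually occurring in the recursion for $B_k$ act nilpotently on $L_1$. I would attack this by replacing $L$ once by $[L,L]$, so the even part becomes $[L_0,L_0]+[L_1,L_1]$, and then use the $L_0$-invariance of the symmetric bracket $L_1\times L_1\to L_0$ together with Lie's theorem to control the weights, arguing that modulo the nilradical of $L_0$ the bracket of odd elements is forced to act nilpotently, after which the length-$N$ vanishing above applies. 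This representation-theoretic bookkeeping is where essentially all the difficulty resides; the remaining derived-series manipulations are elementary.
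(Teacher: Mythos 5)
Your forward implication is correct, and so are all of your structural reductions for the converse: extension of scalars preserves solvability, the homogeneous decomposition $L^{(k)}=A_k\oplus B_k$ satisfies exactly the recursion $A_{k+1}=[A_k,A_k]+[B_k,B_k]$, $B_{k+1}=[A_k,B_k]$ you state, and once $B_{k_0}=0$ the series does terminate by solvability of $A_{k_0}\subseteq L_0$. You have also correctly located the crux, namely that the operators $\mathrm{ad}(a)|_{L_1}$ for $a\in A_k$ need not come from $[L_0,L_0]$, where Lie's theorem guarantees nilpotent action. But note that the paper contains no proof of this proposition at all: it is quoted as a result of Kac with a citation to \cite{kac2} (Proposition 1.3.3 there). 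So the entire burden of the statement falls on your converse, and there you stop at a sketch: by your own admission the step where ``essentially all the difficulty resides'' is not carried out. As written, the proposal is therefore incomplete.

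Moreover, the repair you gesture at --- that ``modulo the nilradical of $L_0$ the bracket of odd elements is forced to act nilpotently'' on $L_1$ --- is false, so this route cannot be completed as described. Consider the four-dimensional Lie superalgebra
\begin{align*}
&L_0=\mathbb{C}h\oplus\mathbb{C}z,\qquad L_1=\mathbb{C}u\oplus\mathbb{C}w,\\
&[h,z]=2z,\quad [h,u]=0,\quad [h,w]=2w,\quad [z,u]=-w,\quad [z,w]=0,\\
&[u,u]=h,\quad [u,w]=[w,u]=z,\quad [w,w]=0,
\end{align*}
whose Jacobi identities are readily checked. Here $L_0$ is solvable (two-dimensional) and $L$ is solvable ($L^{(3)}=0$, consistent with the proposition), yet $h=[u,u]\in[L_1,L_1]$ acts on $L_1$ with the nonzero eigenvalue $2$; since the nilradical of $L_0$ is $\mathbb{C}z$ and $\mathrm{ad}(z)|_{L_1}$ is strictly triangular, no correction $h+cz$ acts nilpotently. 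Replacing $L$ by $[L,L]$ does not help either: $h$ lies in $[L,L]_0$ and acts on the odd part $\mathbb{C}w$ of $[L,L]$ with the same eigenvalue. What actually kills the odd tail in this example is not pointwise nilpotence of the operators in $A_1$ but the feedback term in your own recursion: $[B_1,B_1]=[w,w]=0$ forces $A_2=[A_1,A_1]=\mathbb{C}z\subseteq[L_0,L_0]$, and only then does the Lie--Engel mechanism apply. (A related caveat: even where individual operators are nilpotent, concluding that length-$N$ products annihilate $L_1$ requires all of them to lie in a single Lie algebra of nilpotent endomorphisms, via a common Engel flag.) A correct proof must therefore intertwine the shrinking of the even components $A_k$ with the module action on the odd components, rather than establish nilpotence of all even operators that occur; for the genuine argument see Kac \cite{kac2} or Scheunert \cite{scheunert}.
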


\begin{proposition}
Suppose that the field $K$ is algebraically closed. All the finite-dimensional irreducible graded representations of a solvable Lie superalgebra $L$ are one-dimensional if and only if $[L_1,L_1] \subset [L_0,L_0]$.
\end{proposition}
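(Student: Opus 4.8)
This proposition is a representation-theoretic criterion: for a solvable Lie superalgebra $L = L_0 \oplus L_1$ over an algebraically closed field $K$, every finite-dimensional irreducible graded representation is one-dimensional iff $[L_1,L_1]\subset[L_0,L_0]$. This is attributed to Kac. Let me think about how one would actually prove it.

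**Key tools.** The preceding proposition says $L$ is solvable iff $L_0$ is solvable. For a solvable Lie algebra over an algebraically closed field of characteristic zero, Lie's theorem gives a common eigenvector / one-dimensional invariant subspace, so all irreducible representations of $L_0$ are one-dimensional. The subtlety for superalgebras is the odd part $L_1$, on which Lie's theorem fails in general (as the text itself notes). An irreducible *graded* representation $V = V_0\oplus V_1$ is one-dimensional only if the action of $L_1$ is suitably degenerate — in fact a graded-irreducible one-dimensional module must have $L_1$ acting as zero (since $L_1$ shifts parity). The condition $[L_1,L_1]\subset[L_0,L_0]$ is what forces $L_1$ to act by zero on any one-dimensional (hence on any irreducible) module.

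**One direction.** Suppose all finite-dimensional irreducible graded representations are one-dimensional. Take any irreducible $V$; then $\dim V=1$, so $V$ is purely even or purely odd and $L_1\cdot V=0$, forcing $[L_1,L_1]$ to act as zero there. One must show this forces $[L_1,L_1]\subset[L_0,L_0]$ at the algebra level — typically by considering the adjoint-type or a universal construction, or by exhibiting a one-dimensional module on which a given functional $\lambda$ on $L_0$ kills $[L_0,L_0]$, and checking $\lambda$ must also vanish on $[L_1,L_1]$; since one-dimensional modules are parametrized by functionals vanishing on $[L_0,L_0]$, the inclusion $[L_1,L_1]\subset[L_0,L_0]$ is equivalent to every such $\lambda$ killing $[L_1,L_1]$.

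**Converse direction and the hard part.** Assume $[L_1,L_1]\subset[L_0,L_0]$. Let $V$ be irreducible. By Lie's theorem applied to the solvable $L_0$, there is a common eigenvector $v$ with $X\cdot v=\lambda(X)v$ for $X\in L_0$, where $\lambda([L_0,L_0])=0$. The plan is to show the line $Kv$ (or a small $L$-submodule it generates) is $L$-stable, contradicting irreducibility unless $\dim V=1$. The crux is controlling $L_1$: for $U\in L_1$, the vector $U\cdot v$ is again an eigenvector for $L_0$ with a shifted weight, and iterating $U$ twice lands in $[L_1,L_1]\cdot v \subset [L_0,L_0]\cdot v$, which $\lambda$ annihilates — this forces the $L_1$-action to be nilpotent in a controlled way and, combined with graded irreducibility, collapses $V$ to a line.

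**The main obstacle.** The genuine difficulty is the converse: showing that the standard "Lie's theorem" argument survives the presence of odd elements. Since $L_1$ need not have a common eigenvector, I expect the real work to be a careful weight/eigenvalue argument showing that the span of $L_0$-eigenvectors reachable by applying $L_1$ is finite and that the condition $[L_1,L_1]\subset[L_0,L_0]$ makes all the resulting weights collapse to a single one, so that the whole representation has a single $L_0$-weight and $L_1$ acts trivially. I would lean on Kac's and Scheunert's treatment (as the paper does for its other results) to supply the precise invariant-theoretic lemma, rather than reproving Lie's theorem in the super setting from scratch.
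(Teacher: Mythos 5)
First, a point of comparison you could not have known: the paper contains \emph{no proof} of this proposition. It is quoted verbatim as one of Kac's results (with a citation to his 1977 paper \cite{kac2}), immediately after the remark that Lie's theorem can fail for solvable Lie superalgebras. So your decision to defer the hard invariant-theoretic lemma to Kac and Scheunert is, in effect, exactly the paper's own treatment --- but it means your text is a citation plus a sketch, not a proof, and the sketch itself has concrete problems. In the forward direction your reduction is circular as stated: one-dimensional graded $L$-modules correspond to functionals $\lambda$ on $L_0$ vanishing on $[L_0,L_0]$ \emph{and} on $[L_1,L_1]$ (both sit inside $L_0$, and $L_1$ kills the module for the parity reason you correctly give), so ``every $\lambda$ parametrizing a one-dimensional module kills $[L_1,L_1]$'' is automatic and yields nothing. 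To prove the contrapositive you must take $\lambda$ with $\lambda([L_0,L_0])=0$ but $\lambda([L_1,L_1])\neq 0$ and \emph{construct} a finite-dimensional irreducible graded $L$-module of dimension greater than one --- the standard device is an irreducible quotient of the induced module $U(L)\otimes_{U(L_0)}K_\lambda$, finite-dimensional because $U(L)$ is a free $U(L_0)$-module of rank $2^{\dim L_1}$ by Poincar\'e--Birkhoff--Witt, and not one-dimensional because $[L_1,L_1]$ would then have to act both by $\lambda\neq 0$ and by $0$. That construction is the entire content of this direction and is absent from your proposal.

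In the converse direction there is, beyond the acknowledged deferral, an actual error: for a common $L_0$-eigenvector $v$ of weight $\lambda$ and $U\in L_1$, the vector $Uv$ is \emph{not} in general an $L_0$-eigenvector with a shifted weight. One has $X(Uv)=\lambda(X)\,Uv+[X,U]v$ with $[X,U]\in L_1$, so $[X,U]v$ is an arbitrary vector of the module; there is no root-shift mechanism here because $L_0$ is merely solvable and carries no Cartan-type diagonal action on $L_1$. (Your observation that $U^2v=\tfrac{1}{2}[U,U]v$ with $[U,U]\in[L_1,L_1]\subset[L_0,L_0]$, hence $U^2v=0$, is correct but only controls a single odd element on a single vector.) The known proofs do not run through a common-eigenvector argument at all: Kac proceeds by showing that every finite-dimensional irreducible graded representation of a solvable Lie superalgebra is induced from a one-dimensional representation of a suitable graded subalgebra containing $L_0$, via induction on dimension through a codimension-one graded ideal, and then reads off the criterion. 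So your plan identifies the right ingredients (Lie's theorem for $L_0$, parity forcing $L_1$ to kill one-dimensional modules, induced modules) but neither direction, as written, would close.
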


\subsection{Modules}\label{derivationsmodules}
Let L be a Klein graded Lie algebra. A L-module M is a Klein graded vector space
\[M = \bigoplus_{k \in K} M_k\]
where K is the Klein group, such that $L_g.M_h \subset M_{g+h}$ for every g, h $\in$ K and 
\[[x,y].m = x.(y.m) - \theta(x,y)y.(x.m)\]
for every $m \in M , x,y \in L$. Note that a homomorphism $r:L \to$ End(M) of Klein graded Lie algebras
defines a L-module structure on M and vice vera.
\newline
Let V and W be L-modules, and f $\in Hom(V,W)_k, k \in \mathcal{K}$ such that f(xm) = $\theta(k,x)x.f(m)$, for every $x \in L, m \in M$, then f is called a color homomorphism of L-modules. If $\theta(k,x) =e$, then f is called a homomorphism of L-modules.
Now we can give a more abstract definition for derivations, and we will prove some propositions introduced in \cite{Erdos01}.
\begin{definition}\label{derivationsonmodules}
Let V be a L-module. A linear mapping f: L $\to$ V is called a derivation if
\begin{center}
f([x,y]) = $\theta(f,x)x.f(y) - \theta(fx,y)y.f(x)$,
\end{center}
for every x,y $\in$ L. The derivations of the form $x \to x.u (u \in V)$ are called inner(as expected). We say that a derivation f has degree k $ (der(f)=k)$ if 
\[f \neq 0\]
and 
\[f(L_h) \subset V_{k+h}\]
 for all h $\in K$.
\end{definition}
This is compatible with the previous definition of derivation. 
This is true due to the fact that  $\theta(f+ x, y)y.f(x)= f(x).y$.
\newline
We denote with Der(L,V) and Inn(L,V) the spaces of derivations and inner derivations respectively, and we write $Der(L,V)_k = \{f \in Der(LV): deg(f)=k\} \cup \{0\}$. Note that $H^1(L,V) = Der(L,V)/Inn(L,V)$ is the first cohomology group of L with coefficients in V.

\begin{proposition}\label{divideprop}
Let V be a L-module. Then Der(L,V) = $\bigoplus_{k \in K} Der(L,V)_k$.
\end{proposition}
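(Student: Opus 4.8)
The plan is to decompose an arbitrary derivation into its homogeneous parts, to verify that each part is itself a derivation, and finally to check that the sum is direct and exhausts $\text{Der}(L,V)$. The first and last points are essentially bookkeeping; the real content is that each homogeneous component inherits the derivation property.

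First I would isolate the homogeneous components. Since $V=\bigoplus_{m\in\mathcal{K}}M_m$, write $\pi_m:V\to M_m$ for the canonical projection. Given $f\in\text{Der}(L,V)$ and $k\in\mathcal{K}$, define a linear map $f_k:L\to V$ on each homogeneous summand $L_h$ by
\[ f_k(x)=\pi_{k+h}\bigl(f(x)\bigr)\qquad (x\in L_h), \]
and extend linearly via $L=\bigoplus_h L_h$. By construction $f_k(L_h)\subset M_{k+h}=V_{k+h}$ for every $h$, so $f_k$ is homogeneous of degree $k$ (or identically zero); and since $\mathcal{K}$ is finite and the graded components of $f(x)$ recombine to $f(x)$, we obtain $f=\sum_{k\in\mathcal{K}}f_k$.

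The key step is to show each $f_k$ lies in $\text{Der}(L,V)_k$. Take homogeneous $x\in L_a$, $y\in L_b$, so that $[x,y]\in L_{a+b}$ and $f_k([x,y])\in V_{k+a+b}$. I would apply the derivation identity of Definition \ref{derivationsonmodules} to $f$ and then project the whole equation onto the single graded subspace $V_{k+a+b}$. The module axiom $L_g.V_m\subset V_{g+m}$ shows that the degree-$(k+a+b)$ component of $x.f(y)$ is exactly $x.f_k(y)$ (because $f_k(y)\in V_{k+b}$ and $a+(k+b)=k+a+b$), and similarly the degree-$(k+a+b)$ component of $y.f(x)$ is $y.f_k(x)$. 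Tracking the color factors, the factor $\theta(f,x)$ attached to the degree-$k$ part of $f$ reads $\theta(k,a)$, while $\theta(fx,y)$ reads $\theta(k+a,b)$, so the projection yields
\[ f_k([x,y])=\theta(k,a)\,x.f_k(y)-\theta(k+a,b)\,y.f_k(x), \]
which is precisely the derivation identity for a homogeneous map of degree $k$. Hence $f_k\in\text{Der}(L,V)_k$.

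It remains to record directness and the reverse inclusion. For directness, if $\sum_k g_k=0$ with $g_k\in\text{Der}(L,V)_k$, then evaluating at any $x\in L_h$ gives $\sum_k g_k(x)=0$ with $g_k(x)\in V_{k+h}$; as $k$ ranges over $\mathcal{K}$ the subspaces $V_{k+h}$ are the distinct independent graded pieces of $V$, forcing each $g_k(x)=0$, hence $g_k=0$. For $\bigoplus_k\text{Der}(L,V)_k\subset\text{Der}(L,V)$, reading the derivation identity component-wise in $V$ recovers on each graded piece the homogeneous identity already satisfied by the individual summands, so a finite sum of homogeneous derivations is again a derivation. Combined with $f=\sum_k f_k$, this establishes $\text{Der}(L,V)=\bigoplus_{k\in\mathcal{K}}\text{Der}(L,V)_k$. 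The main obstacle I anticipate is the careful handling of the color factors in the key step: one must confirm that projecting the inhomogeneous identity onto $V_{k+a+b}$ correctly turns $\theta(f,x)$ into $\theta(k,a)$ and $\theta(fx,y)$ into $\theta(k+a,b)$, i.e.\ that the ambiguous ``degree of $f$'' in the color function is read as $k$ on the degree-$k$ component. The symmetry and $\pm1$-valuedness of $\theta$ on the Klein group keep the algebra harmless, but the index bookkeeping must be done with care.
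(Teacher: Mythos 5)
Your proof is correct, and its core step is the same as the paper's: both of you define the homogeneous components of $f$ by sandwiching it between the canonical projections of $L$ and $V$ (your $f_k(x)=\pi_{k+h}(f(x))$ for $x\in L_h$ is exactly the paper's $f_g=\sum_h p_{g+h}\,f\,r_h$), and both verify $f_k\in Der(L,V)_k$ by projecting the derivation identity for $f$ onto a single graded piece of $V$, using $L_g.V_m\subset V_{g+m}$ to sort the terms. Where you genuinely diverge is the spanning step $f=\sum_k f_k$: you observe that the Klein group is finite, so the projections $\pi_{k+h}$ sum to the identity on $V$ and the decomposition holds pointwise on each $L_h$ — done. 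The paper instead transplants the Zhang--Zhang argument for general Lie color algebras: it fixes a finite generating set $S\subset L$, introduces finite index sets $Q,R,T$, and shows that $f$ and $\sum_q f_q$ agree on $S$ and hence on $L$. That machinery is only needed when the grading group may be infinite (so that a linear map could a priori have infinitely many nonzero components), and in the paper it drags along notational debris from the source (additive notation and a $p_{k+h}$ where $p_{g+h}$ is meant); your route is shorter and better adapted to the actual hypothesis. You also record directness of the sum and closure of $\bigoplus_k Der(L,V)_k$ inside $Der(L,V)$, both of which the paper leaves implicit. One shared caveat: Definition \ref{derivationsonmodules} writes $\theta(f,x)$ for a possibly inhomogeneous $f$, so the identity being projected is, strictly speaking, ambiguous; the paper's computation silently carries $\theta(f,h)$ through and relabels its degree at the end, while you at least state explicitly that the degree of $f$ must be read as $k$ on the $k$-th component. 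Since the paper's own proof rests on the same convention, this is a defect of the definition, not a gap in your argument.
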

\begin{proof}
For each element k in klein group, let $r_k : L \to L_k$ and $p_k:V\to V_k$ denote the canonical projections. According to our general assumption there is a finite subset S$\subset$L generating L. Let $f:L \to V$ be a derivation. Then there are finite sets Q,R $\subset$ K such that
\begin{equation}
S \subset \bigoplus_{g \in Q}L_g,
\end{equation}
and
\begin{equation}
f(S) \subset \bigoplus_{g \in R}V_g,
\end{equation}
For $g \in K$, we set $f_g = \sum_{h \in K} p_{k+h} f r_h$. If $x_h \in L_h$ and $x_k \in L_k$, then we have:
\begin{center}
$f_g([x_h,x_k]) = p_{g+h+k}f([x_h,x_k]) = p_{g+h+k}(\theta(f,h)x_hf(x_k) - \theta(f + h,k)x_kf(x_h)) = \theta(f,h)x_hp_{g+k}(f(x_k)) - \theta(f+h,k)x_kp_{g+h}(f(x_h)) =  \theta(f,h)x_hf_g(x_k) - \theta(f+h,k)x_kf(x_h)$,
\end{center}
it shows that $f_g$ is contained in $Der(L,V)_g$
\newline
Let T=\{g-h : g $\in$ R, h $\in$ Q\}. Then T is finite and we obtain(keep in mind (4) and (5)), for y $\in$ S
\begin{align*}
f(y) &= \sum_{g \in R} p_gf(y) = \sum_{g \in R}\sum_{h \in Q}p_gfr_h(y)
\\ &= \sum_{h \in Q}(\sum_{g \in R}p_{g-h} + hfr_h(y)) =\sum_{h \in Q}(\sum_{q \in T} p_q + hfp_h(y))
\\ &=\sum_{q \in T}(\sum_{h \in Q}p_{q+h}fr_h(y)) =\sum_{q \in T}\sum_{h \in G}p_{q+h}fp_h(y)  = \sum_{q \in T}f_q(y).
\end{align*}
This shows that the derivation f and $ \sum_{q \in T}f_q$ coincide on S. As S generates L, we obtain f = $ \sum_{q \in T}f_q(y)$. This proves the assertion.
\end{proof}

\begin{proposition}
Let V be a L-module such that

\begin{enumerate}[{ }i{.}]
\item $H^1(L_0,V_g) = 0, \forall g \in K \backslash \{0\}$
\item $Hom_{L_0}(L_g, V_) = 0, g \neq h.$
\end{enumerate}
Then Der(L,V) = $Der(L,V)_0$ + Inn(L,V).
\end{proposition}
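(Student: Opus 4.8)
The plan is to combine Proposition \ref{divideprop} with the two vanishing hypotheses to show that every homogeneous derivation of non-zero degree is already inner; the assertion then follows immediately from the grading $\text{Der}(L,V)=\bigoplus_{k\in K}\text{Der}(L,V)_k$. Concretely, I would fix $k\in K\setminus\{0\}$ and prove the inclusion $\text{Der}(L,V)_k\subset\text{Inn}(L,V)$. The degree-$0$ part is permitted on the right-hand side, and the reverse inclusion $\text{Der}(L,V)_0+\text{Inn}(L,V)\subset\text{Der}(L,V)$ is automatic, so this single inclusion is all that is really at stake.

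So let $f\in\text{Der}(L,V)_k$ with $k\neq0$. First I would restrict $f$ to $L_0$. Since $L_0.V_h\subset V_h$, each $V_h$ is an $L_0$-module, and $f(L_0)\subset V_k$. For $x,y\in L_0$ both degrees equal $0$, so every color prefactor in Definition \ref{derivationsonmodules} is $\theta(\cdot,0)=1$ and the defining identity collapses to
\[
f([x,y]) = x.f(y) - y.f(x),
\]
i.e. $f|_{L_0}$ is an ordinary Lie-algebra derivation of $L_0$ into the module $V_k$. By hypothesis (i), $H^1(L_0,V_k)=0$, so this derivation is inner: there exists $v\in V_k$ with $f(x)=x.v$ for all $x\in L_0$.

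Next I would subtract the corresponding inner derivation. Set $g=f-\iota_v$, where $\iota_v(x)=x.v$. Because $v\in V_k$ we have $\iota_v(L_h)\subset V_{k+h}$, so $\iota_v\in\text{Der}(L,V)_k$ and hence $g\in\text{Der}(L,V)_k$, while by construction $g|_{L_0}=0$. Now for $x\in L_0$ and $y\in L_h$ with $h\neq0$, the derivation identity for $g$ has $\theta(g,x)=\theta(k,0)=1$ and $g(x)=0$, so it reduces to
\[
g([x,y]) = x.g(y).
\]
This says precisely that $g|_{L_h}$ intertwines the adjoint action of $L_0$ on $L_h$ with the action of $L_0$ on $V_{k+h}$, i.e. $g|_{L_h}\in\text{Hom}_{L_0}(L_h,V_{k+h})$. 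Since $k\neq0$ we have $h\neq k+h$, so hypothesis (ii) forces $g|_{L_h}=0$. As this holds for every $h\neq0$ and $g$ already vanishes on $L_0$, we conclude $g=0$, that is $f=\iota_v\in\text{Inn}(L,V)$.

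Finally I would assemble the pieces: by Proposition \ref{divideprop} an arbitrary derivation splits as $f=f_0+\sum_{k\neq0}f_k$ with $f_0\in\text{Der}(L,V)_0$ and each $f_k\in\text{Der}(L,V)_k\subset\text{Inn}(L,V)$, whence $f\in\text{Der}(L,V)_0+\text{Inn}(L,V)$. The step I expect to be the main obstacle is the second reduction: one must verify that the color prefactors genuinely vanish because the acting element lies in $L_0$ (degree $0$), so that $g|_{L_h}$ is an honest $L_0$-module map and hypothesis (ii) can be applied with the shifted degree $k+h\neq h$. Everything else is bookkeeping with the decomposition of $\text{Der}(L,V)$.
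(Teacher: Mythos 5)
Your overall strategy coincides with the paper's: decompose $f$ by Proposition \ref{divideprop}, use hypothesis (i) to make each restriction $f_k|_{L_0}$ inner, subtract an inner derivation so that the difference vanishes on $L_0$, observe that the derivation identity then turns the difference into an $L_0$-module homomorphism $L_h \to V_{k+h}$, and kill it by hypothesis (ii) since $k+h\neq h$. However, there is a genuine flaw at the subtraction step, and it sits exactly at the color bookkeeping you yourself flagged as the main obstacle. The untwisted map $\iota_v(x)=x.v$ with $v\in V_k$ is in general \emph{not} a color derivation of degree $k$: testing the identity of Definition \ref{derivationsonmodules} against the module axiom $[x,y].v = x.(y.v)-\theta(x,y)\,y.(x.v)$ shows that the inner derivation of degree $k$ attached to $v$ must carry the twist, i.e.\ it is $x\mapsto \theta(k,x)\,x.v$; this is precisely what the paper subtracts when it forms $\psi_g(x)=f_g(x)-\theta(g,x)\,x.u_g$. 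With your untwisted $\iota_v$, the difference $g=f-\iota_v$ need not lie in $Der(L,V)_k$ at all, so you may not invoke ``the derivation identity for $g$''; computing directly, for $x\in L_0$ and $y\in L_h$ one finds
\[
g([x,y]) \;=\; x.g(y) \;+\; \bigl(1-\theta(k,h)\bigr)\,y.(x.v),
\]
and the error term survives whenever $\theta(k,h)=-1$, which occurs for every nontrivial Klein color (for instance $\theta_2(r,s)=-1$ with $k=r$, $h=s$). So as written, $g|_{L_h}$ is not an $L_0$-module map and hypothesis (ii) cannot be applied to it.

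The repair is one line and brings you back onto the paper's proof: subtract instead the twisted inner derivation $\iota'_v(x)=\theta(k,x)\,x.v$. Since $\theta(k,0)=1$, it still agrees with $f_k$ on $L_0$; it is an honest derivation of degree $k$; and for $x\in L_0$, $y\in L_h$ the identity now genuinely collapses to $g([x,y])=x.g(y)$, after which hypothesis (ii) yields $f_k(x)=\theta(k,x)\,x.v$, the inner derivation attached to $v$. (Note the paper's Definition \ref{derivationsonmodules} states inner derivations in the untwisted form $x\mapsto x.u$, but its own proof works with the twisted $\psi_g$, as it must; the definition is loose on this point, which may be what misled you.) Everything else in your argument --- the reduction to homogeneous components via Proposition \ref{divideprop}, the restriction to $L_0$ becoming an ordinary derivation because $\theta(\cdot,0)=1$, and the shifted-degree observation $k+h\neq h$ --- matches the paper's proof exactly.
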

\begin{proof}
Let $f : L \to V$ be a color derivation. According to proposition \ref{divideprop}, we can decompose f into its homogeneous components: f = $\sum_{g \in K}f_g , f_g \in Der(L,V)_g$. Suppose that g $\neq$ 0. Then $f_g | L_0$ is a color derivation from $L_0$ into the $L_0-module V_g$. Using (a) $f_g |L_0$ is inner, i.e., there is $u_g \in V_g$ such that $f_g(x) = x.v_g$, for all $x \in L_0$. Consider $\psi_g:L \to V, \psi_g(x)=f_g(x) - \theta(g,x)x.u_g, \forall x \in L$. Then $\psi_g$ is a color derivation of degree g, which vanishes on $L_0$. Hence, for every $x_0 \in L_0, x\in L_h$,
\begin{center}
$\psi_g([x_0,x]) = \theta(g,0)x_0\psi_g(x) - \theta(g +0,h)x.\psi_g(x_0) = x_0.\psi_g(x).$
\end{center}
Thus, $\psi_g$ is a color homomorphism of $L_0$-modules and condition (b) entails the vanishing of $\psi_g$ on $L_h$ for every $h \in K$. Consequently, $f_g \in Inn(L,V)$, proving the desired.
\end{proof}

\begin{remark}
Let L be a Klein graded Lie algebra and V a L-module. Supose that $\tau : L \to V$ is a color homomorphism of L-modules of degree r, i.e., $\tau(L_g) \subset V_g+r$, for all g $\in K$. If $\sigma : G \to \mathbb{C}$ is Z-linear, then the linear mapping $\phi_{\sigma, \tau} : L \to V$ given by  $\phi_{\sigma, \tau}(x_g) = \sigma(g)\tau(x_g)$, for every $x_g \in L_g, g\in K$ is a derivation of degree r. 
\end{remark}
\begin{proof}
For every $x_g \in L_g, x_h \in L_h$, we have
\begin{align*}
\phi_{\sigma,\tau}([x_g,x_h]) &= \sigma(g+h)\tau([x_g,x_h]) = (\sigma(g) + \sigma(h))\tau([x_g,x_h])
\\ &= \sigma(g)\tau([x_g,x_h]) + \sigma(h)\tau([x_g,x_h])
\\ &=-\theta(g,h)\theta(r,h)\sigma(g)x_h\tau(x_g) + \sigma(h)\theta(r,g)x_g\tau(x_h)
\\ &=\theta(r,g)x_g\phi_{\sigma,\tau}(x_h) - \theta(g+r,h)x_h\phi_{\sigma,\tau}(x_g).
\end{align*}
This is the desired result.
\end{proof}
Let $J \subset L$ be an ideal, V be a L-module and consider the L-submodule $V^J = \{u \in V | x.u = 0 \forall x \in J\}$ of V.
\begin{remark}
Let A $\subset$ L be a Klein graded subalgebra, J $\triangleleft $ L a Klein graded ideal. 
If $L = A\bigoplus J$, then every derivation f$:A \to V^L$ can be extended to a derivation $f^* : L \to V$ be setting $f^*(J) =0$.
\end{remark}
\begin{proof}
Let $f^* : L \to V$ such that
$$ f^*(x)= \begin{cases}
f(x) & \text{, }  x \in A \\
x  &  \text{, } x \in J. 
\end{cases}$$
Then $f^*([a,x]) = 0, a \in a, x \in j$. But
\begin{center}
$\theta(f^*,a)a.f^*(x) - \theta(f^* + a,x)x.f^*(a) = -\theta(f^* +a,x)x.f^*(a) = \theta(f^* +a,x)x.f(a)$.
\end{center}
Sinse f(a) $\in V^J$, then x.f(a) =0, i.e., $f^*([a,x]) = \theta(f^*,a)a.f^*(x) - \theta(f^* +a, x)x.f^*(a)$.
Hence $f^*$ is a color derivation and $f^*(J) =0$.
\end{proof}

\section{The Enveloping Algebra}\label{envelopingalgebra}

In this paragraph we shall introduce the enveloping algebra of a Lie superalgebra and the envoping algebra of a Klein graded Lie algebra. As to be expected from the Lie algebra case the enveloping algebra turns out to be a very useful tool for the theory of Lie superalgebras(or  a Klein graded Lie algebra) and their representations.

\subsection{The tensor algebra}

Let $V$ be a vector space over a field $K$. For any nonnegative integer $k$, we define the kth tensor power of $V$ to be the tensor product of V with itself $k$ times:
\[T_k(V) = V \otimes V \otimes \dots \otimes V.\]
That is, $T_kV$ consists of all tensors on $V$ of rank $k$. By convention $T_0V$ is the ground field $K$ (as a one-dimensional vector space over itself). We construct $T(V)$ as the direct sum of $T_k(V)$:
\[T(V) = \bigoplus_{k=0}^{\infty} T_k(V)\]
The multiplication in $T(V)$ is determined by the canonical isomorphism
\[T_k(V) \otimes T_l(V) \to T_{k+l}(V)\]
given by the tensor product, which is then extended by linearity to all of $T(V)$. This multiplication rule implies that the tensor algebra $T(V)$ is naturally a graded algebra with $T_k(V)$ serving as the grade-k subspace. This grading can be extended to a Z grading by appending subspaces $T_k(V) = \{0\}$ for negative integers k.

\subsection{Definition for Lie superalgebras}\label{72}
Let $L=L_0 \bigoplus L_1$ be a Lie superalgebra and let T(L) be the tensor algrbra of the vector space L. The $Z_2$ gradation of L induces a $Z_2$ gradation T(L) such that the canonical injection $L \to T(L)$ is an even linear mapping and that T(L) is an associative superalgebra. The equations below give us a way to construct the $Z_2$ gradation of T(L) in terms of L

\begin{equation*}
 ( L \otimes L)_t = \bigoplus_{x+y=t} L_x \otimes L_ y
\end{equation*}
\begin{equation*}
 (L_{x_1} \otimes L_{ y_1})(L_{x_2} \otimes L_ {y_2}) =  L_{x_1 + x_2} \otimes L_{y_1 + y_2} 
\end{equation*}
We consider the two-sided ideal J of T(L) which is generated by the elements
\begin{equation*}
 A\otimes B - (-1)^{ab}  B \otimes A  + [A,B] 
\end{equation*}
\begin{center}
with A $\in L_a$, B $\in L_b$ ; a,b $\in Z_2$
\end{center}
Evidently these elements are homogeneous (of degree a+b), hence J is a graded ideal. Therefore, if we define
\begin{equation*}
U(L) = T(L) / J
\end{equation*}
it follws that U(L) is an assosiative superalgebra; this algebra is called enveloping algebra of L. By composing the canonical injection $L \to T(L)$ with the canonical mapping $T(V) \to U(L)$ we obtain the canonical even linear mapping
\[\sigma : L \to U(L) \]

which satisfies the following condition:
\begin{equation}\label{sigmaofuniversal}
\sigma([A,B]) = \sigma(A)\sigma(B) - (-1)^{ab}\sigma(B)\sigma(A)
\end{equation}
\[\text{for all } A \in L_a \; , \; B \in L_b \; ; \; a,b \in Z_2.\]
Every element of $U(L)$ is a linear combination of products of the form
\[\sigma(A_1) \dots \sigma(A_r) \; \text{ with } A_i \in L_i \; , \; a_i \in Z_2 \; ; \; 1 \leq i \leq r\]
(for $r=0$ we define this product to be equal to 1) ; this product is a homogeneous element of $U(L)$ of degree $a_1 + \dots a_r$. The pair $(U(L), \sigma)$ is characterized by \ref{sigmaofuniversal} and by the following universal property:
\begin{proposition}\label{maponassociative}
Let S be an associative algebra with unit element and let g be a linear mapping of $L$ into $S$ such that
\begin{equation*}
g([A,B]) = g(A)g(B) - (-1)^{ab}g(B)g(A)
\end{equation*}
\[\text{for all } A \in L_a \; , \; B \in L_b \; ; \; a,b \in Z_2.\]
Then there exists a unique homomorphism $\bar{g}$ of the algebra $U(L)$ into the algebra $S$ such that
\[g = \bar{g} \circ \sigma \; \; , \; \; \bar{g}(1) =1.\]
\end{proposition}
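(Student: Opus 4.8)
The plan is to exploit two universal properties in sequence: first that of the tensor algebra $T(L)$, and then that of the quotient $T(L)/J = U(L)$. Recall that $T(L)$ is the free associative unital algebra on the vector space $L$, so any linear map $g : L \to S$ into an associative unital algebra $S$ extends uniquely to an algebra homomorphism $\tilde{g} : T(L) \to S$ with $\tilde{g}(1) = 1$; explicitly $\tilde{g}(A_1 \otimes \cdots \otimes A_r) = g(A_1)\cdots g(A_r)$. First I would invoke this fact to produce $\tilde{g}$ directly from the given $g$, with no conditions imposed yet.

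The crucial step is to show that $J \subseteq \ker\tilde{g}$. Since $\ker\tilde{g}$ is a two-sided ideal and $J$ is generated by the homogeneous elements $A \otimes B - (-1)^{ab} B \otimes A + [A,B]$ with $A \in L_a$, $B \in L_b$, it suffices to verify that $\tilde{g}$ annihilates each such generator, rather than every element of $J$. Applying $\tilde{g}$ to a generator yields $g(A)g(B) - (-1)^{ab} g(B) g(A) + g([A,B])$, which (up to the sign convention relating the generators of $J$ to the defining relation $\eqref{sigmaofuniversal}$) vanishes precisely by the hypothesis $g([A,B]) = g(A)g(B) - (-1)^{ab} g(B) g(A)$. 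This is the one place where the special structure of $g$ is used, and it is the heart of the argument; everything surrounding it is formal.

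With $J \subseteq \ker\tilde{g}$ in hand, the universal property of the quotient gives a unique algebra homomorphism $\bar{g} : U(L) \to S$ with $\bar{g} \circ \pi = \tilde{g}$, where $\pi : T(L) \to U(L)$ is the canonical projection. Since $\sigma$ is by construction the composite of the inclusion $L \hookrightarrow T(L)$ with $\pi$, I would then read off $\bar{g} \circ \sigma = \tilde{g}|_L = g$ and $\bar{g}(1) = \tilde{g}(1) = 1$, which establishes existence. For uniqueness I would use the observation already recorded in the excerpt that every element of $U(L)$ is a linear combination of products $\sigma(A_1)\cdots\sigma(A_r)$: any homomorphism $h$ satisfying $h \circ \sigma = g$ and $h(1) = 1$ is forced to agree with $\bar{g}$ on each such product, hence on all of $U(L)$. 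I expect no genuine obstacle beyond careful bookkeeping in the generator computation of the middle step, in particular correctly matching the signs between the generators of $J$ and the relation $\eqref{sigmaofuniversal}$.
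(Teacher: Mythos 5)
Your proof is correct, but it follows a genuinely different route from the paper's. The paper never passes through the tensor algebra: it \emph{defines} $\bar{g}$ directly on the spanning products by $\bar{g}(\sigma(A_1)\cdots\sigma(A_r)) = g(A_1)\cdots g(A_r)$, extends linearly, and checks multiplicativity and uniqueness. That argument is shorter but hides a well-definedness issue your approach resolves: the products $\sigma(A_1)\cdots\sigma(A_r)$ are not linearly independent in $U(L)$ --- they satisfy, e.g., $\sigma(A)\sigma(B) - (-1)^{ab}\sigma(B)\sigma(A) = \sigma([A,B])$ by \eqref{sigmaofuniversal} --- so prescribing $\bar{g}$ by its values on them requires verifying that the prescription respects all such relations. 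Strikingly, the paper's proof never invokes the hypothesis $g([A,B]) = g(A)g(B) - (-1)^{ab}g(B)g(A)$ at all, which is a symptom of this gap. In your version the hypothesis enters exactly where it must: $T(L)$ is free on $L$, so $\tilde{g}$ exists unconditionally and is honestly well defined, and the hypothesis is then used once, to show $\tilde{g}$ kills the generators of $J$, so that $\bar{g}$ descends to the quotient $U(L) = T(L)/J$. You were also right to flag the sign bookkeeping: as printed, the paper's generator $A\otimes B - (-1)^{ab}B\otimes A + [A,B]$ is inconsistent with \eqref{sigmaofuniversal} (applying $\tilde{g}$ to it gives $2g([A,B])$, not $0$); the generator should read $A\otimes B - (-1)^{ab}B\otimes A - [A,B]$, and with that correction your middle-step computation vanishes exactly by hypothesis. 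Your uniqueness argument coincides with the paper's, via the spanning set $\sigma(A_1)\cdots\sigma(A_r)$. In short: same statement, but your proof is the standard rigorous factorization through the free algebra, whereas the paper trades rigor at the well-definedness step for brevity.
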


\begin{proof}
Let us use a diagram to make more clear
\[
\begin{tikzcd}
&L \arrow{d}{g}\arrow{r}{\sigma}
&U(L) \arrow{ld}{\bar{g}} \\
&S 
\end{tikzcd}
\]
Now recall that every element of $U(L)$ is a linear combination of products of the form
\[\sigma(A_1) \dots \sigma(A_r) \; \text{ with } A_i \in L_i \; , \; a_i \in Z_2 \; ; \; 1 \leq i \leq r.\]
We define $\bar{g}$ as
\[\bar{g}(\sigma(A_1) \dots \sigma(A_r) ) = g(A_1) \dots g(A_r)\]
\[\text{ with } A_i \in L_i \; , \; a_i \in Z_2 \; ; \; 1 \leq i \leq r.\]
set $\bar{g}(1)=1$ and linear extend to all elements of $U(L)$. $\bar{g}$ is a homomorphism of associative algebras:
\[\bar{g}(1)=1\]
\[\bar{g}(rx) = r\bar{g}(x) \text{ for all x in } U(L)\]
\[\bar{g}(x + y) = \bar{g}(x + y) \text{ for all x,y in } U(L)\]
\begin{align*}
& \bar{g}(\sigma(A_1) \dots \sigma(A_r)\sigma(B_1) \dots \sigma(B_k)) = 
\\ & g(A_1) \dots g(A_r)g(B_1) \dots g(B_k) =
\\ & \bar{g}(\sigma(A_1) \dots \sigma(A_r))\bar{g}(\sigma(B_1) \dots \sigma(B_k)) 
\\ & \text{ for all } A_i,B_j \in  L \; ; 1 \leq i \leq r \; , 1 \leq j \leq k 
\end{align*}
Hence, $\bar{g}$ is a homomorphism of these associative algebras and
\[g = \bar{g} \circ \sigma \; \; , \; \; \bar{g}(1) =1.\]
Suppose that there exists another homomorphism $f : U(L) \to S$ with 
\[g = f \circ \sigma \; \; , \; \; f(1) =1.\]
then on every element of the form
\[\sigma(A_1) \dots \sigma(A_r) \; \text{ with } A_i \in L_i \; , \; a_i \in Z_2 \; ; \; 1 \leq i \leq r.\]
we have that 
\[f(\sigma(A_1) \dots \sigma(A_r)) =f(\sigma(A_1)) \dots f(\sigma(A_r))= g(A_1) \dots g(A_r) = \bar{g}(A_1) \dots \bar{g}(A_r)\]
Hence, $f = \bar{g}$.
\end{proof}

\subsection{Definition for Klein graded Lie algebras}\label{73}
Now let L be a Klein graded Lie algebra and let T(L) be the tensor algebra of the vector space L. Again, the $\mathcal{K}$-gradation of L induces a $\mathcal{K}$-gradation on T(L). The equations below give us a way to construct the Klein gradation of T(L) in terms of L
\begin{equation*}
 ( L \otimes L)_t = \bigoplus_{xy=t} L_x \otimes L_ y
\end{equation*}
\begin{equation*}
 (L_{x_1} \otimes L_{ y_1})(L_{x_2} \otimes L_ {y_2}) =  L_{x_1 x_2} \otimes L_{y_1 y_2} 
\end{equation*}
We consider the two-sided ideal J of T(L) which is generated by the elements
\begin{equation*}
 A\otimes B - \theta(a,b)  B \otimes A  + [A,B] 
\end{equation*}
\begin{center}
with A $\in L_a$, B $\in L_b$ ; a,b $\in  \mathcal{K}$
\end{center}
Evidently these elemens are homogeneous (of degree ab), hence J is a graded ideal.
Therefore, if we define
\begin{equation*}
U(L) = T(L) / J
\end{equation*}
it follws that U(L) is an assosiative Klein graded algebra; this algebra is called enveloping algebra of L. By composing the canonical injection $L \to T(L)$ with the canonical mapping $T(V) \to U(L)$ we obtain the canonical even linear mapping
\[\sigma : L \to U(L) \]

which satisfies the following condition:
\begin{equation}\label{sigmaofuniversal2}
\sigma([A,B]) = \sigma(A)\sigma(B) - \theta(a,b)\sigma(B)\sigma(A)
\end{equation}
\[\text{for all } A \in L_a \; , \; B \in L_b \; ; \; a,b \in\mathcal{K}.\]
Every element of $U(L)$ is a linear combination of products of the form
\[\sigma(A_1) \dots \sigma(A_r) \; \text{ with } A_i \in L_i \; , \; a_i \in \mathcal{K} \; ; \; 1 \leq i \leq r\]
(for $r=0$ we define this product to be equal to 1) ; this product is a homogeneous element of $U(L)$ of degree $a_1 + \dots a_r$. The pair $(U(L), \sigma)$ is characterized by \ref{sigmaofuniversal} and by the following universal property:
\begin{proposition}
Let S be an associative algebra with unit element and let g be a linear mapping of $L$ into $S$ such that
\begin{equation*}
g([A,B]) = g(A)g(B) - \theta(a,b)g(B)g(A)
\end{equation*}
\[\text{for all } A \in L_a \; , \; B \in L_b \; ; \; a,b \in \mathcal{K}.\]
Then there exists a unique homomorphism $\bar{g}$ of the algebra $U(L)$ into the algebra $S$ such that
\[g = \bar{g} \circ \sigma \; \; , \; \; \bar{g}(1) =1.\]
\end{proposition}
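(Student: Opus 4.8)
The plan is to repeat the argument of Proposition~\ref{maponassociative} essentially verbatim, with the sign factor $(-1)^{ab}$ replaced everywhere by the color $\theta(a,b)$. This transfer is legitimate because the proof of Proposition~\ref{maponassociative} uses nothing about $(-1)^{ab}$ beyond the fact that it is a bilinear character of the grading group, and $\theta$ shares exactly this feature.

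First I would extend $g$ to the tensor algebra. Since $S$ is associative and unital and $T(L)$ is the free associative algebra on the vector space $L$, the linear map $g\colon L\to S$ extends uniquely to an algebra homomorphism $\tilde g\colon T(L)\to S$ with $\tilde g(1)=1$, given on homogeneous tensors by $\tilde g(A_1\otimes\cdots\otimes A_r)=g(A_1)\cdots g(A_r)$. This is the forced candidate, since $T(L)$ is generated by $L$ and the image of $g$ must be respected.

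The one substantive step is to verify that $\tilde g$ factors through the quotient $U(L)=T(L)/J$, i.e. that $\tilde g$ vanishes on the two-sided ideal $J$. This is where the hypothesis on $g$ is used: the relation~\eqref{sigmaofuniversal2} obeyed by $\sigma$ inside $U(L)$ is exactly the relation that $\tilde g$ must respect, and applying $\tilde g$ to a generator of $J$ yields $g(A)g(B)-\theta(a,b)g(B)g(A)-g([A,B])$, which is zero precisely by the assumed identity $g([A,B])=g(A)g(B)-\theta(a,b)g(B)g(A)$ for homogeneous $A\in L_a$, $B\in L_b$. It therefore suffices to check this on the homogeneous generators of $J$, and the general case follows since $J$ is the two-sided ideal they generate. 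Hence $\tilde g$ descends to an algebra homomorphism $\bar g\colon U(L)\to S$ with $\bar g(1)=1$ and $\bar g\circ\sigma=g$.

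Finally I would dispose of uniqueness. Because every element of $U(L)$ is a linear combination of products $\sigma(A_1)\cdots\sigma(A_r)$, any homomorphism $f$ with $f(1)=1$ and $f\circ\sigma=g$ must satisfy $f(\sigma(A_1)\cdots\sigma(A_r))=g(A_1)\cdots g(A_r)=\bar g(\sigma(A_1)\cdots\sigma(A_r))$; extending by linearity forces $f=\bar g$. I do not expect any genuine obstacle: the only point requiring care is the well-definedness of $\bar g$ on the quotient, and, as indicated above, this is immediate once one observes that the defining hypothesis on $g$ is exactly the image of the relation~\eqref{sigmaofuniversal2} built into $U(L)$. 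The color $\theta$ being a bilinear character of $\mathcal{K}$ is what guarantees that no further subtlety enters relative to the superalgebra case.
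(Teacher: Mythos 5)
Your proof is correct, and it actually takes a tighter route than the paper's own argument. The paper (in Proposition~\ref{maponassociative} for the $\mathbb{Z}_2$ case, whose proof the Klein-graded statement silently inherits) defines $\bar g$ directly on the spanning products by $\bar g(\sigma(A_1)\cdots\sigma(A_r)) = g(A_1)\cdots g(A_r)$ and then ``extends by linearity.'' That prescription is not obviously consistent: the products $\sigma(A_1)\cdots\sigma(A_r)$ are linearly \emph{dependent} in $U(L)$, precisely because the generators of $J$ impose relations among them, so one must verify that the assigned values respect every such relation --- a verification the paper omits. Your factorization through the tensor algebra supplies exactly this missing step: $\tilde g$ exists and is well defined because $T(L)$ is free on the vector space $L$, and descending to $U(L)=T(L)/J$ requires only $\tilde g(J)=0$, which you check on the homogeneous generators using the hypothesis on $g$ and which then propagates to all of $J$ since $\tilde g$ is an algebra map and $J$ is the two-sided ideal they generate. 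The uniqueness argument is identical in both proofs. One bookkeeping remark: the paper writes the generators of $J$ as $A\otimes B - \theta(a,b)\,B\otimes A + [A,B]$, which is inconsistent in sign with the relation \eqref{sigmaofuniversal2} it is meant to produce; your computation $\tilde g(\mathrm{generator}) = g(A)g(B) - \theta(a,b)g(B)g(A) - g([A,B]) = 0$ tacitly uses the generator $A\otimes B - \theta(a,b)\,B\otimes A - [A,B]$, which is the convention forced by \eqref{sigmaofuniversal2} and by the hypothesis on $g$, so your sign is the correct one.
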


\subsection{The supersymmetric algebra of a graded vector space} 
Let $V=V_0 \oplus V_1$ be a $Z_2$ graded vector space and let $T(V)$ be the tensor  algebra of $V$. Then $T(V)$ is a $Z$ graded algebra,
\[T(V) = \bigoplus_{n \in Z} T_n(V)\]
where $T_m(V) =\{0\}$ if $m \leq -1$ and where $T_n(V)$ is the vector space of tensors of order $n$ if $n\geq 0$. If $T(V)$ is equipped with the $Z_2$ gradation of inherited $V$ then all the $T_n(V)$ are $Z_2$ graded subspaces of $T(V)$; hence $T(V)$ is an associative $Z$ graded superalgebra. Now let $I$ be the two-sided ideal of $T(V)$ which is generated by the tensors of the form
\[A \otimes B -(-1)^{ab}B \otimes A\]
\[\text{with } a\in L_a, b \in L_b \; ;a,b \in Z_2\]
These tensors are homogeneous both with respect to the $Z_2$ gradation and with respect to the $Z$ gradation of $T(V)$. Hence 
\[S(V) = T(V) / I \]
is an associative $Z$ graded superalgebra which is called the supersymmetric algebra of the $Z_2$ graded vector space $V$. If we define $S_n(L)$ the images of $T_n(L)$ under the cannonical mapping, then we can write
\[S(V) = \bigoplus_{n \in Z} S_n(V)\]
\begin{remark}
Notice that if V is equiped with the trivial bracket
\[ [V,V] = \{0\}\]
then $V$ is an (abelian) Lie superalgebra and $S(V)$ is nothing else but the enveloping algebra of this Lie superalgebra. Recall that the universal algebra is defined as $U(L) = T(L) / J$, where J is the two-sided ideal J of T(L) which is generated by the elements
\begin{equation*}
 A\otimes B - (-1)^{ab}  B \otimes A  + [A,B] 
\end{equation*}
\begin{center}
with A $\in L_a$, B $\in L_b$ ; a,b $\in Z_2$
\end{center}
Hence, $U(L) = S(L)$.
\end{remark}
It is easy now to define the Klein symmetric algebra of a Klein graded vector space. We set $I$ the two-sided ideal of the Tensor algebra which is generated by the tensors of the form
\[A \otimes B -\theta(a,b)B \otimes A\]
\[\text{with } a\in L_a, b \in L_b \; ;a,b \in \mathcal{K}\]
\begin{remark}
If V is equiped with the trivial bracket
\[ [V,V] = \{0\}\]
then $V$ is an (abelian) Klein graded Lie algebra and $S(V)$ is nothing else but the enveloping algebra of this Klein graded Lie algebra.
\end{remark}

\subsection{Filtration of the enveloping algebra}
The following definitions and propositions are the same for both a Lie superalgebra and a Klein graded Lie algebra. Let $L$ be a Lie superalgebra(respectively a Klein graded Lie algebra) and let $T(V)$ the tensor algebra of the vector space $L$. In the preceding section we have seen that $T(L)$ has a  natural structure of a $Z$ graded superalgebra(resp. $Z$ graded Klein algebra)
\[T(L) = \bigoplus_{n \in Z}T_n(L)\]
We define for all $n \in Z$
\[T^n(L) = \bigoplus_{m \leq n} T_m(L) \; \]
the $T^n(L)$ are $Z_2$ graded(resp. Klein graded)  subspaces of $T(L)$.
\begin{remark}
Here we have to note that 
\[T^n(L) \otimes T^m(L) \subset T^{n+m}(L)\]
which is true due to the multiplication of $T(L)$ and the definition of $T^n(L)$.
\end{remark}
Now let $U(L)$ be the enveloping algebra of $L$ and let $\sigma : L \to U(L)$ be the canonical mapping. Let $U^n(L)$ be the image of $T^n(L)$ under the canonical mapping $ \pi:T(L) \to U(L)$. 

\begin{proposition}
It is easy to verify the following statements

\begin{enumerate}
\item{
If $n \geq 0$ the subspace $U^n(L)$ of $U(L)$ is generated by the products of the form $\sigma(A_1)\dots \sigma(A_m)$ with $0 \leq m \leq n $ and $A_1, \dots, A_m \in L$.
}
\item{
The $U^n(L)$ are $Z_2$ (resp. Klein) graded subspaces of $U(L)$.
}
\item{
$U^n(L) \subset U^m(L) \; \text{ if } \; n \leq m$
}
\item{
$U^n(L)= \{0\} \; \text{ if } \; n \leq -1$
}
\item{$U^0(L) = K  \cdot 1$}
\item{
$\bigcup_{n \geq 0} U^n(L) = U(L)$
}
\item{
$U^n(L) U^m(L) \subset U^{n+m}(L)\; \text{ for all } \; n \in Z$
}
\end{enumerate}
\end{proposition}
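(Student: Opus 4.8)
The plan is to deduce all seven assertions by transporting the corresponding—and already established—properties of the filtration $\{T^n(L)\}$ of the tensor algebra across the canonical map $\pi : T(L) \to U(L)$. The two features of $\pi$ that drive everything are: $\pi$ is a surjective homomorphism of $Z_2$-graded (resp. Klein-graded) algebras whose restriction to $L$ is $\sigma$, so that $\pi(A_1 \otimes \cdots \otimes A_m) = \sigma(A_1) \cdots \sigma(A_m)$; and, by definition, $U^n(L) = \pi(T^n(L))$. Each item below is then the $\pi$-image of a fact about $T^n(L)$ recorded in the previous subsection.

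With these in hand, most items are immediate. For (1), recall that $T^n(L) = \bigoplus_{m \le n} T_m(L)$ for $n \ge 0$ and that each $T_m(L)$ is spanned by the decomposable tensors $A_1 \otimes \cdots \otimes A_m$; applying the homomorphism $\pi$ shows $U^n(L)$ is spanned by the products $\sigma(A_1)\cdots\sigma(A_m)$ with $0 \le m \le n$ (the case $m=0$ giving the unit). For (2), each $T^n(L)$ is a graded subspace of $T(L)$, and since $\pi$ is homogeneous of degree $e$, the image $\pi(T^n(L))$ is again a graded subspace. Items (3) and (4) follow from the inclusions $T^n(L) \subset T^m(L)$ for $n \le m$ and the vanishing $T^n(L) = \{0\}$ for $n \le -1$ by applying $\pi$. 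For (6), every element of $T(L)$ lies in some $T^n(L)$, so $\bigcup_{n \ge 0} T^n(L) = T(L)$; since $\pi$ is onto, $\bigcup_{n \ge 0} U^n(L) = U(L)$. For (7), I would invoke the preceding remark $T^n(L)\,T^m(L) \subset T^{n+m}(L)$ together with $\pi(xy) = \pi(x)\pi(y)$ to get $U^n(L)\,U^m(L) = \pi(T^n(L))\,\pi(T^m(L)) = \pi\bigl(T^n(L)\,T^m(L)\bigr) \subset U^{n+m}(L)$.

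The one step that is not a pure formality is (5), and I expect it to be the main point to verify. Transporting $T^0(L) = T_0(L) = K$ only yields $U^0(L) = \pi(K) = K \cdot \pi(1)$, and this equals $K \cdot 1$ as an honest one-dimensional space only if $\pi(1) \neq 0$, i.e. if the unit survives in the quotient. To see this I would note that the generators $A \otimes B - \theta(a,b)\, B \otimes A + [A,B]$ of $J$ all lie in $T_1(L) \oplus T_2(L)$, hence carry $Z$-degree at least $1$; multiplying such generators on either side by arbitrary tensors keeps the $Z$-degree at least $1$, so the two-sided ideal satisfies $J \subseteq \bigoplus_{n \ge 1} T_n(L)$ and therefore $J \cap T_0(L) = \{0\}$. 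Consequently $\pi$ is injective on scalars, $\pi(1) \neq 0$, and $U^0(L) = K \cdot 1$ as claimed.
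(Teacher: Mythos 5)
Your proposal is correct, and its overall strategy---transporting the already-established facts about the filtration $T^n(L)$ of the tensor algebra through the canonical projection $\pi$---is the same as the paper's. The differences are local, and in both places where you diverge your version is the tighter one. For item (1) the paper argues by induction on $n$, writing $U^{n+1}(L) = U^n(L) \oplus T_{n+1}/J$; that decomposition is not actually a direct sum inside $U(L)$ (the correct statement is $U^{n+1}(L) = U^n(L) + \pi(T_{n+1}(L))$, and the two pieces overlap precisely because of the relations generating $J$), so your direct argument---$T^n(L)$ is spanned by decomposable tensors of length $\le n$, and the algebra homomorphism $\pi$ carries these to the products $\sigma(A_1)\cdots\sigma(A_m)$---sidesteps a misstatement and needs no induction at all. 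For item (5) the paper's proof consists of the single line $T^0(L) = K$, which by itself only yields $U^0(L) = K\cdot\pi(1)$ and leaves open whether $\pi(1) \neq 0$; you correctly flagged this as the one genuinely nontrivial point and closed it by observing that the generators $A \otimes B - \theta(a,b)\,B\otimes A + [A,B]$ of $J$ lie in $T_1(L)\oplus T_2(L)$, hence $J \subseteq \bigoplus_{n\ge 1} T_n(L)$, so $J \cap T_0(L) = \{0\}$ and the unit survives in the quotient. (The same nondegeneracy also follows later from Poincar\'e--Birkhoff--Witt, but your grading argument is elementary and self-contained at this stage, which is preferable since the proposition precedes that theorem.) Items (2), (3), (4), (6) and (7) match the paper's proofs essentially verbatim.
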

\begin{proof}
\begin{enumerate}
\item{
$U^n(L)$ contains tensors with degree up to $n$. We can prove this statement by induction to n. For $n=0$ the statement is obvious because $U^0(L)$ is the ground field $K$ and the product is equal to 1 as it was defined in a previous section. Assume that it holds for $n$ and we will prove it for $n+1$. Note that 
\[U^{n+1}(L) = U^n(L) \oplus T_{n+1}/J\]
where J the ideal of $T(L)$ as defined in the definition of the universal algebra. Then $U^n(L)$ is generated by products of the form 
\[\sigma(A_1)\dots \sigma(A_m)\]
\[\text{ with } 0 \leq m \leq n  \text{ and } A_1, \dots, A_m \in L\]
 and $ T_{n+1}/J$ is generated by products of the form 
\[\sigma(B_1)\dots \sigma(B_{n+1})\]
\[\text{ with } B_1, \dots, B_{n+1} \in L\]
Hence, $U^{n+1}(L)$ is generated by products of the form
\[\sigma(A_1)\dots \sigma(A_m)\]
\[\text{ with } 0 \leq m \leq n+1  \text{ and } A_1, \dots, A_m \in L\]
}
\item{
As we described in sections \ref{72}, \ref{73} all $T_n(L)$ have a $Z_2$ (resp. Klein) gradation which is inherited to $U^n(L)$.
}
\item{
If $n \leq m$ then 
\[T^n(L) = \bigoplus_{k \leq n} T_k(L) \subset \bigoplus_{k \leq m} T_k(L)\]
Hence, $U^n(L) \subset U^m(L)$ too.
}
\item{
$T^n(L) = 0$ for $n \leq -1$ by definition, hence $U^n(L) = 0$ for $n \leq -1$ too.
}
\item{
$T^0(L) = K$.
}
\item{
We have that
\[\cup_{n \geq 0}T^n(L) = T(L)\]
by definition. So through the cannonical mapping $\pi$ we get the desired result.
}
\item{
We know that
\[T_n(L) T_m(L) \subset T_{n+m}(L)\; \text{ for all } \; n \in Z\]
by the definition of the multiplication on $T(L)$. Hence
\[T^n(L) T^m(L) \subset T^{n+m}(L)\; \text{ for all } \; n \in Z\]
and through the cannonical mapping $\pi$ we get the desired result.
}
\end{enumerate}
\end{proof}
The family $(U^n(L))_{n \in Z}$ is called the canonical filtration of the enveloping algebra $U(L)$.
Now let 
\[G_n(L) = U^n(L)/U^{n-1}(L)\]
and
\[G(L) = \bigoplus_{n \in Z} G_n(L)\]
Moreover, the multiplication in $U(L)$ define a bilinear map \[G_n(L) \times G_m(L) \to G_{n+m}(L)\]
and it can be extended to a bilinear map
\[G \times G \to G\]
which makes $G$ a $Z$ graded associative algebra with unity. $G(L)$ has a natural $Z_2$ (resp. Klein) gradation inherited from $U(L)$ too. Therefore, G(L) is a $Z$ graded assiciative superalgebra (resp. Klein graded algebra).

Denote the composition of the canonical mapping $T_n\to U^n(L) \to G_n(L)$ by $\phi_n$ and let $\phi : T(L) \to G(L)$ be the linear mapping defined by the family $(\phi_n)_{n \in Z}$. Then $\phi$ is surjective homomorphism of $Z$ graded superalgebras (resp. Klein graded algebras) which vanishes on all tensors of the form
\[A \otimes B - (-1)^{ab} B\otimes A\]
\[\text{with \; } A\in L_a \;,\;B \in L_b \; ; \; a,b \in Z_2\]
Respectively
\[A \otimes B - \theta(a,b) B\otimes A\]
\[\text{with \; } A\in L_a \;,\;B \in L_b \; ; \; a,b \in \mathcal{K}\]

Consequently, $\phi$ defines a homomorphism $\bar{\phi} : S(L) \to G(L)$.
\begin{lemma}
$\phi : T(L) \to G(L)$ is an algebra homomorphism. Moreover, $\phi(I) = 0$ ( where $I$ the ideal of $T(L)$ that form the symmetric algebra), so $\phi$ induces a homomorphism $\bar{\phi}$ of $S(L)$ into $G(L)$.
\end{lemma}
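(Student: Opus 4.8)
The plan is to verify the two assertions — that $\phi$ respects products and that it kills the ideal $I$ — directly from the way the associated graded multiplication and the filtration were set up, and then to invoke the universal property of the quotient $S(L) = T(L)/I$. Throughout I will work with homogeneous tensors and extend by linearity, and I will repeatedly use that the canonical projection $\pi : T(L) \to U(L)$ is an algebra homomorphism and that $\phi_n$ is, by construction, the class of $\pi$ modulo one lower filtration degree, i.e. $\phi_n(x) = \pi(x) + U^{n-1}(L)$ for $x \in T_n(L)$.

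First I would establish that $\phi$ is an algebra homomorphism. Since $\phi$ is linear and defined gradewise, it suffices to check multiplicativity on homogeneous elements $x \in T_n(L)$, $y \in T_m(L)$, for which $xy \in T_{n+m}(L)$. Recall that the product on $G(L)$ was defined so that for classes $\bar u \in G_n(L)$ and $\bar v \in G_m(L)$ one has $\bar u\,\bar v = \overline{uv} \in G_{n+m}(L)$; this is well defined precisely because $U^{n-1}(L)U^m(L) + U^n(L)U^{m-1}(L) \subset U^{n+m-1}(L)$, which follows from statement (7) of the preceding proposition. Then
\begin{align*}
\phi_n(x)\,\phi_m(y) &= \bigl(\pi(x) + U^{n-1}(L)\bigr)\bigl(\pi(y) + U^{m-1}(L)\bigr) \\
&= \pi(x)\pi(y) + U^{n+m-1}(L) \\
&= \pi(xy) + U^{n+m-1}(L) = \phi_{n+m}(xy),
\end{align*}
using that $\pi$ is multiplicative. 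This gives $\phi(xy) = \phi(x)\phi(y)$ on homogeneous elements, hence everywhere by bilinearity.

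The key step is to show $\phi(I) = 0$, and this is where the filtration does the real work. It is enough to treat a generator $g = A \otimes B - \theta(a,b)\,B \otimes A \in T_2(L)$ with $A \in L_a$, $B \in L_b$, because $I$ is the two-sided ideal generated by such elements and $\phi$, being an algebra homomorphism, then satisfies $\phi(u\,g\,v) = \phi(u)\phi(g)\phi(v) = 0$ whenever $\phi(g) = 0$. By the defining relation \eqref{sigmaofuniversal2} of $\sigma$ (and its superalgebra analogue), inside $U(L)$ we have $\pi(g) = \sigma(A)\sigma(B) - \theta(a,b)\sigma(B)\sigma(A) = \sigma([A,B]) = \pi([A,B])$. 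Now $[A,B] \in L = T_1(L)$, so $\pi([A,B]) \in U^1(L)$; therefore $\phi_2(g) = \pi(g) + U^1(L) = \pi([A,B]) + U^1(L) = 0$ in $G_2(L) = U^2(L)/U^1(L)$. The essential phenomenon, and the only real obstacle, is exactly this filtration shift: the bracket term $[A,B]$ is a degree-one (linear) element, hence lives one filtration level below the degree-two generator $g$, so it is annihilated in passing to the associated graded. This is the precise mechanism by which the noncommutativity of $U(L)$ disappears in $G(L)$.

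Finally, having $\phi(I) = 0$ and $\phi$ an algebra homomorphism, the universal property of the quotient algebra $S(L) = T(L)/I$ yields a unique algebra homomorphism $\bar\phi : S(L) \to G(L)$ with $\bar\phi \circ p = \phi$, where $p : T(L) \to S(L)$ is the canonical projection. I would finish by remarking that $\bar\phi$ is automatically a homomorphism of $Z$-graded (and of $Z_2$- resp. Klein-graded) algebras, since $\phi$ was built gradewise and $p$ is homogeneous, so no further checking of the gradings is needed.
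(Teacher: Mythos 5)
Your proposal is correct and follows essentially the same route as the paper's proof: multiplicativity of $\phi$ read off from the definition of the product on $G(L)$, followed by the filtration-shift argument that a generator $g = A\otimes B - \theta(a,b)\,B\otimes A$ satisfies $\pi(g) = \pm\pi([A,B]) \in U^1(L)$ and hence dies in $G_2(L) = U^2(L)/U^1(L)$. You in fact spell out two steps the paper leaves implicit — the well-definedness of the graded product via $U^{n-1}(L)U^m(L) \subset U^{n+m-1}(L)$, and the passage from generators to the full ideal $I$ using multiplicativity — and the sign discrepancy with the paper (which writes $\pi(g)=\pi([A,B])$ while the definition of $J$ gives $-\pi([A,B])$) is immaterial, since either way the image lies in $U^1(L)$.
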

\begin{proof}
Let $x \in T_m , y \in T_p$ be homogeneous tensors. By definition of the product in $G(L)$, $\phi(xy) = \phi(x)\phi(y)$, so it follows that $\phi$ is multiplicative on $T(L)$. Let 
\[x\otimes y - (-1)^{ab}y\otimes x\]
\[x \in L_a, y \in L_b ; a,b \in Z_2\]
(respectively for the Klein version)
\[x\otimes y - \theta(a,b)y\otimes x\]
\[x \in L_a, y \in L_b ; a,b \in \mathcal{K}\]
 be a typical generator of $I$. Then 
\[\pi(x\otimes y - (-1)^{ab}y\otimes x) \in U^2(L)\]
\[\text{Respectively: }\pi(x\otimes y - \theta(a,b)y\otimes x) \in U^2(L)\]
by definition. On the other hand we have that
\[\pi(x\otimes y - (-1)^{ab}y\otimes x) = \pi([x,y]) \in U^1(L)\]
\[\text{Respectively: }\pi(x\otimes y - \theta(a,b)y\otimes x) = \pi([x,y]) \in U^1(L)\]
therefore, 
\[\phi(x\otimes y - (-1)^{ab}y\otimes x)  \in U^1(L)/U^1(L) = 0\]
\[\text{Respectively: }\pi(x\otimes y - \theta(a,b)y\otimes x) \in U^1(L)/U^1(L) = 0\]
It follows that $I \subset kef\phi$.
\end{proof}

\subsection{Poincar\'e-Birkhoff-Witt}
In order to prove this theorem, we need two more definitions and a lemma. 
\begin{definition}
Let L be a Lie superalgebra and let
\[B = \{x_1, \dots , x_n\}\]
be a basis of L. Then the products 
\[x_{i_1} \otimes \dots x_{i_m}\]
\[1\leq i_j \leq n \]
form a basis of $T_m(L)$. We can define a total order on elements $x_{i(j)}$ by setting
\[x_{i_n} < x_{i_m} \; \text{ if } \;  i_n \leq i_m\] 
A nondecreasing sequence of elements of B is
\[(x_{i_1}, \dots, x_{i_m})\]
\[1 \leq i_1 \leq \dots \leq i_m \leq n\]
\end{definition}

\begin{definition}
For an element
\[u = x_{i_1} \otimes \dots x_{i_n} \in T_n(L)\]
define
\[D(u) = \#\{(x_{i_k},x_{i_m}) | x_{i_k} > x_{i_m} \text{ and } k < m\}\]
where \# we denote the number of elements of this set. For an element of the tensor algebra
\[u = \sum_i c_i u_i \in T(L)\]
\[u_i \in T_i(L) \; , \; c_i \in K\]
define
\[D(u) = sup\{D(u_i)\}\]
The number $D(u)$ is called the disorder of $u$.
\end{definition}
Suppose we are given a basis $(E_i)_{i \in I}$ of L such that all elements $E_i$ are homogeneous and such that the index set $I$ is totally ordered. For every integer $r \geq 0$ let $H_r$ be the set of all finite sequences $(i_1,\dots,i_r)$ in $I$ such that 
\[r \geq 0 \text{ arbitrary}\]
\[i_1 \leq i_2 \leq \dots \leq i_r\]
\[i_p < i_{p+1} \text{ if } E_{i_p} \text{ and } E_{i_{p+1}} \text{ are odd.}\]
define
\[H_0 = \emptyset\]
\[H = \cup_{r \geq 0} H_r\]
The proof of the following lemmas can be found in \cite{scheunert}.
\begin{lemma}
Define for every $i \in I$
\[F_i = \begin{cases} E_i\otimes 1, & \mbox{if } E_i  \in L_0 \\ 1\otimes E_i, & \mbox{if } E_i  \in L_1 \end{cases}\]
and, furthermore, for every $r \geq 0$ and every element $N = (i_1, \dots, i_r) \in H_r$
\[F_N = F_{i_1} \dots F_{i_r}\]
(by convention, $F_{\emptyset} =1$). If $N$ runs through all sequences from $H_r$ the elements $F_N$ form a basis of $S_r(L)$.
\end{lemma}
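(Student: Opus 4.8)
The plan is to realize $S(L)$ through a concrete model in which the asserted basis is manifest, and then to transport both spanning and linear independence along the universal property. Recall from the remark preceding the lemma that, since $S(L)$ is the enveloping algebra of $L$ viewed as an abelian Lie superalgebra (trivial bracket), the pair $(S(L),\sigma)$ enjoys the universal property of Proposition \ref{maponassociative}. With the trivial bracket the defining relation becomes $\sigma(A)\sigma(B)=(-1)^{ab}\sigma(B)\sigma(A)$, so that two even generators commute, an even and an odd generator commute, two odd generators anticommute, and in particular $\sigma(E_i)^2=0$ for odd $E_i$ because the characteristic is zero.

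First I would settle the spanning statement. Since $S(L)$ is generated by the $\sigma(E_i)$, every element of $S_r(L)$ is a linear combination of products $\sigma(E_{j_1})\cdots\sigma(E_{j_r})$. Using the supercommutation relations above I can sort the factors into nondecreasing index order at the cost of signs $\pm 1$, while any product containing a repeated odd index vanishes. What survives is exactly a linear combination of the monomials indexed by $N\in H_r$; hence these span $S_r(L)$.

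The heart of the argument is linear independence, and for this I would introduce the model algebra $A=\mathrm{Sym}(L_0)\otimes\Lambda(L_1)$, the tensor product of the commutative symmetric algebra on the even part with the exterior algebra on the odd part, graded by total degree. Define a linear map $g\colon L\to A$ by $g(E_i)=F_i$, that is $E_i\otimes 1$ for even $E_i$ and $1\otimes E_i$ for odd $E_i$. One checks directly that the $F_i$ satisfy the supercommutation relations in $A$: two even generators commute inside $\mathrm{Sym}(L_0)$, two odd generators anticommute inside $\Lambda(L_1)$, and an even and an odd generator commute because they lie in different tensor factors. Thus $g(A)g(B)-(-1)^{ab}g(B)g(A)=0=g([A,B])$, and by the universal property $g$ extends to an algebra homomorphism $\bar g\colon S(L)\to A$ with $\bar g(\sigma(E_i))=F_i$, whence $\bar g(\sigma(E_{i_1})\cdots\sigma(E_{i_r}))=F_N$.

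Finally I would invoke the classical basis theorems: the nondecreasing monomials in the even generators form a basis of $\mathrm{Sym}(L_0)$, the strictly increasing monomials in the odd generators form a basis of $\Lambda(L_1)$, and their products form a basis of the degree-$r$ component $A_r$. This family of products is precisely $\{F_N:N\in H_r\}$, so these elements are linearly independent in $A$. Since $\bar g$ carries the spanning family $\{\sigma(E_N)\}$ onto the linearly independent family $\{F_N\}$, the family $\{\sigma(E_N)\}$ is itself linearly independent, and $\bar g$ restricts to an isomorphism $S_r(L)\xrightarrow{\sim}A_r$ identifying the $F_N$ with a basis of $S_r(L)$. The only genuine obstacle is this independence step: it rests on the classical basis theorems for symmetric and exterior algebras, and on the characteristic-zero hypothesis that forces $\sigma(E_i)^2=0$ for odd $E_i$; without these the reordering argument alone would yield spanning but not a basis.
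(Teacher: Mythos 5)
Your proof is correct, but note that on this lemma there is no in-paper argument to compare against: the paper explicitly defers the proof to \cite{scheunert} (``The proof of the following lemmas can be found in...''), so you have supplied a proof where the paper has none. Your route is in substance the standard one, and essentially the one Scheunert himself uses: invoke the remark that $S(L)$ is the enveloping algebra of $L$ with the trivial bracket, so the universal property of Proposition \ref{maponassociative} applies; obtain spanning by supercommutative reordering together with $\sigma(E_i)^2=0$ for odd $E_i$ (correctly flagged as a characteristic-zero consequence of $2\sigma(E_i)^2=0$); and obtain independence by mapping onto the model $A=\mathrm{Sym}(L_0)\otimes\Lambda(L_1)$, whose standard monomial basis in total degree $r$ is exactly $\{F_N : N\in H_r\}$. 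In fact your write-up repairs a tacit gap in the statement as printed: the formulas $F_i=E_i\otimes 1$ and $F_i=1\otimes E_i$ only make sense inside $S(L)$ after one identifies $S(L)$ with $\mathrm{Sym}(L_0)\,\bar{\otimes}\,\Lambda(L_1)$ (an identification Scheunert establishes beforehand but this paper never states), and your homomorphism $\bar g$ constructs precisely that identification; moreover, since $\mathrm{Sym}(L_0)$ is concentrated in even parity, the graded and ordinary tensor products coincide here, so your sign checks (even--even and even--odd commute, odd--odd anticommute) are sound. Two small points you could make explicit, though neither is a gap: the reordering argument stays inside $S_r(L)$ because the generators of the ideal $I$ are homogeneous of tensor degree $2$, so the $\mathbb{Z}$-grading survives the quotient and $\bar g$ respects it; and the bijection between $H_r$ and the standard basis of $A_r$ uses that in a nondecreasing sequence equal indices are adjacent, so the condition ``consecutive odd entries strictly increase'' is equivalent to ``no odd index repeats,'' and separating the even from the odd factors in $F_N$ costs no signs because the odd factors keep their relative order.
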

\begin{lemma}\label{keylemmapwb}
There exists a graded representation $g$ of the Lie superalgebra L in the vector space S(L) such that for every $i \in I$
\[g(E_i)F_N = F_iF_N \; \; \; \text{ if } N \in H \text{ and } i \leq N\]
\end{lemma}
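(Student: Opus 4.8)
The plan is to build the representation $g$ by first specifying each operator $g(E_i)\in \mathrm{End}(S(L))$ through its action on the basis $\{F_N : N\in H\}$ furnished by the previous lemma, and then to verify that the assignment $E_i\mapsto g(E_i)$ respects the superbracket, i.e.\ that
\[
g(E_i)g(E_j) - (-1)^{a_i a_j}\,g(E_j)g(E_i) = g([E_i,E_j])
\]
holds as an identity of endomorphisms, where $a_i$ denotes the degree of $E_i$. Since $L$ is spanned by the homogeneous $E_i$ and both sides are bilinear, checking this on the generators and extending linearly produces a homomorphism $g: L\to pl(S(L))$, which is exactly a graded representation; homogeneity is automatic because each $E_i$ is homogeneous and the recursion below preserves degrees.

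First I would fix the recursive definition of $g(E_i)F_N$. Writing $i\le N$ to mean that $(i,N)\in H$ (so $i\le i_1$, strictly if $E_i$ and $E_{i_1}$ are both odd), I set, in that case,
\[
g(E_i)F_N = F_i F_N,
\]
which is again one of the basis monomials $F_{(i,N)}$. If instead $i\not\le N$, I write $N=(i_1,N')$ with $i_1\le N'$, observe that then $F_N = F_{i_1}F_{N'} = g(E_{i_1})F_{N'}$, and define
\[
g(E_i)F_N = (-1)^{a_i a_{i_1}}\,g(E_{i_1})\bigl(g(E_i)F_{N'}\bigr) + g([E_i,E_{i_1}])F_{N'}.
\]
The right-hand side invokes $g(E_i)$ only on the strictly shorter $F_{N'}$, and then $g(E_{i_1})$ on the result. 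To make this well founded I would run the construction by induction on the length $r$ of $N$, carrying a secondary induction controlling the disorder, and prove alongside the definition that $g(E_i)F_N$, once expanded in the basis, has the formally prepended term $F_iF_N$ as its highest part and corrections of strictly smaller disorder. The delicate point is that $g(E_{i_1})$ is applied to length-$r$ terms of $g(E_i)F_{N'}$; on the leading such term $i_1$ is $\le$ the minimal index, so the action is by simple prepending, while the genuinely recursive pieces decrease the disorder — this is what the secondary induction must tame, and establishing well-definedness here requires care.

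Next I would prove the bracket identity $g(E_i)g(E_j)F_N = (-1)^{a_ia_j}g(E_j)g(E_i)F_N + g([E_i,E_j])F_N$ for all $i,j$ and all $N\in H$, by induction on the length of $N$. The cases $j\le N$ (and symmetrically $i\le N$) reduce almost immediately to the defining formula. The essential case is when neither index is $\le N$: here one peels off the smallest entry $i_1$ of $N$, applies the inductive hypothesis to the shorter $N'$, and reassembles. Besides the wanted terms, the reassembly produces contributions of the form $g([E_i,[E_j,E_{i_1}]])$, $g([E_j,[E_i,E_{i_1}]])$ and $g([E_{i_1},[E_i,E_j]])$ with appropriate signs, whose cancellation in the required combination is precisely the graded Jacobi identity of $L$. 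I expect this reassembly — showing that the discrepancy between the two orders of applying $g(E_i)$ and $g(E_j)$ collapses exactly to $g([E_i,E_j])F_N$ — to be the main obstacle, since it is where the superalgebra structure is genuinely used and where the signs must be tracked with care. The diagonal case $i=j$ with $E_i$ odd is handled separately, using $2\,g(E_i)^2 = g([E_i,E_i])$ together with the fact that odd squares lie in $L_0$.

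Once this identity is established, $g$ is a graded representation of $L$ on $S(L)$, and the defining clause $g(E_i)F_N = F_iF_N$ for $i\le N$ is exactly the asserted relation, completing the proof.
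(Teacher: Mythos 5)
The paper never actually proves this lemma: it states that the proof ``can be found in \cite{scheunert}'', and your proposal reconstructs precisely the classical Birkhoff--Witt argument used there --- define $g(E_i)$ on the basis $(F_N)_{N\in H}$ by prepending when $i\le N$ and by the commutation recursion otherwise, control well-definedness by induction on the length of $N$ together with a leading-term statement, and verify the superbracket relation by a further induction in which the graded Jacobi identity cancels the three triple-bracket contributions. In approach you are therefore aligned with the source the paper cites, and the structure of your verification step is sound.

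There is, however, one step that is genuinely broken as written: the recursive clause is circular in the odd diagonal case, which is exactly the case that distinguishes the super situation from the ordinary Lie algebra one. Suppose $E_i$ is odd and $N=(i,N')\in H$; then $i\not\le N$ (the strictness condition on consecutive odd indices fails), while $(i,N')=N\in H$ means $i\le N'$, so $g(E_i)F_{N'}=F_iF_{N'}=F_N$. Your recursion then reads
\[
g(E_i)F_N=(-1)^{a_ia_i}\,g(E_i)\bigl(g(E_i)F_{N'}\bigr)+g([E_i,E_i])F_{N'}=-\,g(E_i)F_N+g([E_i,E_i])F_{N'},
\]
so the right-hand side invokes exactly the quantity being defined: your assurance that $g(E_i)$ is applied ``only on the strictly shorter $F_{N'}$'' fails here, and the secondary induction on disorder does not rescue it, since $(i,N)$ has no inversions at all. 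The repair is to promote this to a separate defining clause, $g(E_i)F_{(i,N')}:=\tfrac12\,g([E_i,E_i])F_{N'}$, which is what solving the displayed equation forces and is legitimate over a field of characteristic zero; you do cite the relation $2\,g(E_i)^2=g([E_i,E_i])$, but only at the verification stage, whereas it is needed already at the definition stage. Note also that in this case the ``formally prepended'' term $F_iF_N$ vanishes in $S(L)$, because odd $F_i$ square to zero in the exterior factor, so your leading-term induction hypothesis must be weakened accordingly (leading part zero, all terms of strictly smaller length). With that clause added and the hypothesis adjusted, the rest of your outline goes through as in the cited source.
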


\begin{theorem}[Poincar\'e-Birkhoff-Witt]
Let $L$ be a Lie superalgebra with a basis $B$. The monomials formed by finite nondecreasing sequences of elements in $B$ constitute basis of the universal enveloping algebra $U(L)$.
\end{theorem}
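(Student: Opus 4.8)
The plan is to establish the two halves of the statement separately: that the ordered monomials coming from nondecreasing admissible sequences \emph{span} $U(L)$, and that they are \emph{linearly independent}. Spanning is the routine half. Since every element of $U(L)$ is a linear combination of products $\sigma(A_1)\cdots\sigma(A_r)$ with $A_i\in L$, and each $A_i$ expands in the basis $B=(E_i)_{i\in I}$, it suffices to reorder products of basis elements. Relation \eqref{sigmaofuniversal} gives $\sigma(E_i)\sigma(E_j)=(-1)^{e_ie_j}\sigma(E_j)\sigma(E_i)+\sigma([E_i,E_j])$, so whenever two adjacent factors are out of order one may swap them at the cost of a sign and of a term of strictly lower tensor degree; an induction on degree and on the disorder $D$ then rewrites any product as a combination of admissible nondecreasing monomials. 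For an odd $E_i$ the same relation yields $\sigma(E_i)^2=\tfrac12\sigma([E_i,E_i])$, which lowers degree and is why repeated odd factors are barred from $H$.

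For independence I would exploit the universal property together with the representation supplied by Lemma~\ref{keylemmapwb}. That lemma furnishes a graded representation $g$ of $L$ in $S(L)$, i.e. a linear map obeying $g([A,B])=g(A)g(B)-(-1)^{ab}g(B)g(A)$, which is precisely the hypothesis of Proposition~\ref{maponassociative} with $S=\mathrm{End}(S(L))$. Hence $g$ extends to an algebra homomorphism $\bar g:U(L)\to\mathrm{End}(S(L))$ with $\bar g(1)=\mathrm{id}$ and $\bar g(\sigma(A))=g(A)$. The decisive computation is that for an admissible sequence $N=(i_1,\dots,i_r)\in H$,
\begin{equation*}
\bar g\bigl(\sigma(E_{i_1})\cdots\sigma(E_{i_r})\bigr)(F_\emptyset)=g(E_{i_1})\cdots g(E_{i_r})(1)=F_N .
\end{equation*}
This follows by applying the factors from right to left: starting from $F_\emptyset=1$, at the $k$-th stage the remaining tail is $N'=(i_{k+1},\dots,i_r)$, and because $i_k\le i_{k+1}\le\cdots$ one has $i_k\le N'$, so the defining relation $g(E_{i_k})F_{N'}=F_{i_k}F_{N'}=F_{(i_k,\dots,i_r)}$ of Lemma~\ref{keylemmapwb} applies at every step.

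With this identity in hand, independence is immediate. Suppose $\sum_N c_N\,\sigma(E_{i_1^N})\cdots\sigma(E_{i_{r_N}^N})=0$ in $U(L)$, the sum ranging over distinct $N\in H$. Applying $\bar g$ and evaluating at $1\in S(L)$ gives $\sum_N c_N F_N=0$ in $S(L)$. But the $F_N$ form a basis of $S(L)$ by the preceding lemma (assembling the bases of the individual $S_r(L)$), so every $c_N=0$. Together with the spanning established above, the admissible nondecreasing monomials constitute a basis of $U(L)$, which is the assertion.

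The genuine obstacle is concentrated entirely in Lemma~\ref{keylemmapwb}: building a well-defined action of $L$ on $S(L)$ that respects the graded Jacobi identity is the technical heart of the theorem, and it is there (following Scheunert) that the structure of $L$ is actually consumed. Once that representation is granted, the steps above are essentially bookkeeping with the order on $H$, the odd-distinctness condition, and the sign rule; the only care needed is to check that each tail $(i_k,\dots,i_r)$ remains admissible so that $F_{(i_k,\dots,i_r)}$ is a genuine basis vector of $S(L)$.
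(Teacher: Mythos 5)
Your proof is correct and takes essentially the same route as the paper's: spanning by a double induction on tensor degree and on the disorder $D$, and linear independence by letting the representation of Lemma~\ref{keylemmapwb} act on $1\in S(L)$ to recover the basis elements $F_N$ of $S(L)$. You merely make explicit a few steps the paper leaves implicit, namely the extension $\bar g$ via Proposition~\ref{maponassociative}, the right-to-left evaluation $\bar g\bigl(\sigma(E_{i_1})\cdots\sigma(E_{i_r})\bigr)(1)=F_N$ with admissible tails, and the odd-square reduction $\sigma(E_i)^2=\tfrac12\sigma([E_i,E_i])$.
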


\begin{proof}
Let $P: T(L) \to U(L)$ be the canonical morphism, M the submodule generated be the monomials described in the formulation of the theorem. We thave to prove that $U(L) = M$. Note that
\[ U(L) = \sum_p P(T_p)\]
If p=1, then $T_1 \subset L$ ; it follow that $P(T_1) \subset M$. Suppose that $P(T_r) \subset M$. It suffices to prove that $P(T_{r+1}) \subset M$.

Define $T_r^u$ as the submodule of $T_r$ generated by elements with disorder $\leq$ u, and proceed by a second induction on the disorder. We have $P(T_{r+1}^0) \subset M$. Suppose that $v=a\otimes x \otimes y \otimes b \in T^u_{r+1}$, where $x > y \in B$, and $a \in T^n, b \in T_m$ monomials formed by the basic elements in B. Then
\[P(v) = P(a\otimes (-1)^{xy} y\otimes x \otimes b) + P(a \otimes [x,y] \otimes b)\]
but
\[P(a\otimes (-1)^{xy}y\otimes x \otimes b) \in T_{r+1}^{u-1} \subset M\]
and
\[P(a \otimes [x,y] \otimes b) \subset M \]
by first induction hypothesis. Hence, $P(u) \in M$ and it follows that $P(T_{r+1}) \subset M$.

It remains to prove linear independence. For a given sequence $\sigma = (x_{o_1}, \dots , x_{o_n})$ of nondecreasing elements of B; define 
\[x_o = x_{o_1} \dots  x_{o_n} \in U(L).\] 
Suppose
\[\sum_i c_i x_{w_i} = 0\]
where $w_i$ is a sequence nondecreasing, and $c_i \in K$. Using \ref{keylemmapwb}
\[\sum_i c_i x_{w_i} \cdot 1 = \sum_i c_i F_{w_i},\]
and because of the linear independence of the $ F_{w_i}$, it follows that $c_i = 0 \; \forall i$.
\end{proof}
\begin{remark}
A lemma similar to lemma \ref{keylemmapwb} for the case of a Klein graded lie algebra can be found in \cite{PBW} and one can prove the previous theorem for the Klein graded case using this lemma for the step of the linear independence.
\end{remark}

\subsection{Hopf structure}
\subsubsection{Hopf algrebra}
A Hopf algebra, is a structure that is simultaneously an (unital associative) algebra and a (counital coassociative) coalgebra, with these structures' compatibility making it a bialgebra, and that moreover is equipped with an antiautomorphism satisfying a certain property. The representation theory of a Hopf algebra is particularly nice, since the existence of compatible comultiplication, counit, and antipode allows for the construction of tensor products of representations, trivial representations, and dual representations. Here we will give only the definition of a Hopf algebra in order to identify the hopf structure in the enveloping algebra of some graded Lie algebras.
\begin{definition}
An associative algebra A (with product p and unit 1) is a bialgebra over the ground field $K$ if there are maps
\begin{align*}
c&: A \to A \otimes A \text{ (coproduct)  }
\\ \epsilon& : A \to K \text{ (counit) }
\\ S& : A \to A \text{ (andipode)}
\end{align*}
such that
\begin{align*}
(Id_A \otimes c) \circ c = (c \otimes Id_A)\circ c
\\(Id_A \otimes \epsilon) \circ c = Id_A = (\epsilon \otimes Id_A) \circ c
\end{align*}
and the following diagrams commute

\[
\begin{tikzcd}
&A\otimes A \arrow{d}{c\otimes c}\arrow{r}{p}
&A \arrow{r}{c} 
&A\otimes A \arrow{d}{p \otimes p}\\
& A\otimes A\otimes A\otimes A\arrow{rr}{Id\otimes t \otimes Id}
&{}
& A\otimes A\otimes A\otimes A\arrow
\end{tikzcd}
\]
where $t:A\otimes A \to A \otimes A$ is the linear map defined by 
\[t(x \otimes y ) = y \otimes x \; \; \; \text{,  for all x,y in A. }\]

\[
\begin{tikzcd}
&A\otimes A \arrow{d}{\epsilon \otimes \epsilon} \arrow{r}{p}
&A \arrow{ld}{\epsilon} \\
&K\otimes K
\end{tikzcd}
\begin{tikzcd}
&K\otimes K \arrow{d}{1 \otimes 1} \arrow{rd}{1}\\
&A\otimes A  
&A \arrow{l}{c} 
\end{tikzcd}
\begin{tikzcd}
&K \arrow{d}{Id} \arrow{r}{1}
&A \arrow{ld}{\epsilon} \\
&K
\end{tikzcd}
\]
\end{definition}
Now we can give the definition for a Hopf algebra.
\begin{definition}
A Hopf algebra is a bialgebra $A$ over a field $K$ together with a linear map $S: A \to A$ (called the antipode) such that the following diagram commutes:
\[
\begin{tikzcd}
&A \otimes A \arrow{r}{S \otimes Id}
&A \otimes A \arrow{rd}{p} \\
&A \arrow{d}{c} \arrow{u}{c} \arrow{r}{\epsilon}
&K \arrow{r}{1}
&A \\
&A \otimes A \arrow{r}{Id \otimes S}
&A \otimes A \arrow{ur}{p}
\end{tikzcd}
\]
\end{definition}
\subsubsection{The enveloping algebra of a Lie algebra as a Hopf algebra}
Given the definition of a Hopf algebra, we proceed by identifying the Hopf structure on the enveloping algebra of a Lie algebra. We will see that the enveloping algebra of a Lie algebra become a Hopf algebra if we define properly the coproduct, the counit and the antipode maps. Now, given a Lie algebra $L$, it is known that the enveloping algebra $U(L)$ is an associative algebra. We define the comultiplication as
\[c(x) = x \otimes 1 + 1 \otimes x\]
for every x in L. It is compatible with the commutator and can therefore be uniquely extended to all U(L).
Next we define the counit as
\[ \epsilon(x) = 0 \]
for all x in L. And finally, we define the antipode as
\[S(x) = -x\]
\subsubsection{The enveloping algebra of a Lie superalgebra as a Hopf superalgebra}
Initialy we have to take a look on the graded tensor product. Let $A$ and $B$ be two associative superalgebras, then the vector space $A\otimes B$ has a $Z_2$ gradation too
\[(A \otimes B)_c = \bigoplus_{a+b=c} (A_a \otimes B_b) \; \; , \; \;c \in Z_2\]
Now, on $A \otimes B$ we define a multiplication by the requirement that
\[(x \otimes y)(x' \otimes y') = (-1)^{ba'}(xx') \otimes (yy')\]
\[\text{for all } \; x \in A, y\in B_b, x' \in A_{a'} ,y' \in B \; \; ; \; \; b,a' \in Z_2\] 
It's easy to see that with this multiplication $A\otimes B$ is an associative superalgebra. It will be called the graded tensor product and we denote this algebra by $A \bar{\otimes} B$.

Let L be a Lie superalgebra, and let $\sigma:L \to U(L)$ be the canonical mapping. Evidently, the diagonal mapping
\begin{equation*}
L \to L \times L \hspace{10pt} , \hspace{10pt} A \to (A,A) \text{ if A } \in L
\end{equation*}
is a homomorphism of Lie superalgebras. Hence, due to the proposition \ref{maponassociative} there exists a unique homomorphism of superalgebras
\begin{equation*}
c : U(L) \to U(L) \bar{\otimes} U(L)
\end{equation*}
such that
\begin{equation*}
c(\sigma(A)) =\sigma(A)\otimes 1 + 1 \otimes \sigma(A) \hspace{10pt} \text{for all A } \in L
\end{equation*}
\begin{equation*}
c(1) = 1 \otimes 1 .
\end{equation*}
the homomorphism c is called the coproduct of the enveloping algebra U(L).
\begin{remark}
c is associative. This is to say that
\begin{equation*}
(c\otimes Id_U) \circ c = (Id_U \otimes c ) \circ c,
\end{equation*}
both sides being algebra-homomorphisms of U(L) into $U(L) \bar{\otimes} U(L) \bar{\otimes} U(L)$. (Recall that $U(L) \bar{\otimes} U(L) \bar{\otimes} U(L)$ ,  $ (U(L) \bar{\otimes} U(L)) \bar{\otimes} U(L)$ and $U(L) \bar{\otimes} ( U(L) \bar{\otimes} U(L))$ are canonically isomorphic.)
\end{remark}
\begin{proof}
\begin{align*}
(c\otimes Id_U) \circ c (\sigma(A)) &= (c\otimes Id_U) (\sigma(A) \otimes 1 + 1 \otimes \sigma(A)) 
\\&=(\sigma(A) \otimes 1 + 1 \otimes \sigma(A)) \otimes 1 + (1\otimes1) \otimes\sigma(A)
\\&=\sigma(A) \otimes 1 \otimes 1 + 1 \otimes \sigma(A) \otimes 1 + 1\otimes1 \otimes\sigma(A)
\\&= 1 \otimes (\sigma(A) \otimes 1 + 1 \otimes\sigma(A)) + \sigma(A) \otimes 1 \otimes 1
\\&=(Id\otimes c) \circ c (\sigma(A))
\end{align*}
\end{proof}

\begin{remark}
Consider K (the ground field) as an associative superalgebra (the odd subspace of K being equal to \{0\}).  There exists a unique homomorphism of superalgebras
\begin{equation*}
\epsilon : U(L) \to K
\end{equation*}
such that
\begin{equation*}
\epsilon \circ \sigma = 0 \hspace{10pt},\hspace{10pt} \epsilon(1) =1
\end{equation*}
Identifying U(L) $\otimes$ K and K $\otimes$ U(L) with U(L) canonically we have
\begin{equation*}
(\epsilon \otimes id_U) \circ c = (id_U \otimes \epsilon) \circ c = id_U.
\end{equation*}
the homomorphism $\epsilon$ is called the counit of U(L).
\end{remark}
\begin{remark}
Let
\[\mu : U(L) \bar{\otimes} U(L) \to U(L)\]
be the linear mapping defined by the multiplication in $U(L)$ and let
\[\bar{\epsilon} : U(L) \to U(L) \]
be defined by
\[\bar{\epsilon}(X) = \epsilon(X) \cdot 1_U \; \;\text{ for all } x \in U(L) .\]
Also, define the linear mapping
\[S : U(L) \to U(L)\]
such that
\[S(XY) = (-1)^{xy}S(X)S(Y)\]
\[\text{for all } X \in U(L)_a \; , \; Y \in U(L)_b \; \; ; a,b \in Z_2\]
\[S(S(A)) = - S(A)\]
\[\text{for all } \; A \in L\]
\[S(1) = 1\]
and from these properties we deduce that
\[S^2 = id.\]
Then
\[\mu \circ(S \otimes id_U)\circ c = \mu \circ (id_U \otimes S)\circ c = \bar{\epsilon}.\]
A linear mapping $\delta$ with this property is called an antipode.
\end{remark}
In view of these remarks the superalgebra U(L), equipped with the coproduct c, is what is called a Hopf superalgebra.

\subsubsection{The enveloping algebra of a Klein graded Lie algebra as a Klein graded Hopf algebra}
Having the work done for a Lie superalgebra makes it easy to see what happen to a Klein graded Lie algebra by using the same steps with respect to the gradation. Let $A$ and $B$ be two  Klein graded associative algebras, then the vector space $A\otimes B$ has a $Z_2$ gradation too
\[(A \otimes B)_c = \bigoplus_{ab=c} (A_a \otimes B_b) \; \; , \; \;c \in \mathcal{K}\]
Now, on $A \otimes B$ we define a multiplication by the requirement that
\[(x \otimes y)(x' \otimes y') = \theta(b,a')(xx') \otimes (yy')\]
\[\text{for all } \; x \in A, y\in B_b, x' \in A_{a'} ,y' \in B \; \; ; \; \; b,a' \in \mathcal{K}\] 
It's easy to see that with this multiplication $A\otimes B$ is an  Klein graded associative algebra. It will be called the graded tensor product and we denote this algebra by $A \bar{\otimes} B$.

Let $L=L_e \oplus L_r \oplus L_s \oplus L_t$ be a  Klein graded Lie algebra, and let $\sigma:L \to U(L)$ be the canonical mapping. Evidently, the diagonal mapping
\begin{equation*}
L \to L \times L \hspace{10pt} , \hspace{10pt} A \to (A,A) \text{ if A } \in L
\end{equation*}
is a homomorphism of  Klein graded Lie algebras. Hence, due to the proposition \ref{maponassociative} there exists a unique homomorphism of  Klein graded algebras
\begin{equation*}
c : U(L) \to U(L) \bar{\otimes} U(L)
\end{equation*}
such that
\begin{equation*}
c(\sigma(A)) =\sigma(A)\otimes 1 + 1 \otimes \sigma(A) \hspace{10pt} \text{for all A } \in L
\end{equation*}
\begin{equation*}
c(1) = 1 \otimes 1 .
\end{equation*}
the homomorphism c is called the coproduct of the enveloping algebra U(L).
\begin{remark}
c is associative. This is to say that
\begin{equation*}
(c\otimes Id_U) \circ c = (Id_U \otimes c ) \circ c,
\end{equation*}
both sides being algebra-homomorphisms of U(L) into $U(L) \bar{\otimes} U(L) \bar{\otimes} U(L)$. (Recall that $U(L) \bar{\otimes} U(L) \bar{\otimes} U(L)$ ,  $ (U(L) \bar{\otimes} U(L)) \bar{\otimes} U(L)$ and $U(L) \bar{\otimes} ( U(L) \bar{\otimes} U(L))$ are canonically isomorphic.)
\end{remark}
\begin{proof}
\begin{align*}
(c\otimes Id_U) \circ c (\sigma(A)) &= (c\otimes Id_U) (\sigma(A) \otimes 1 + 1 \otimes \sigma(A)) 
\\&=(\sigma(A) \otimes 1 + 1 \otimes \sigma(A)) \otimes 1 + (1\otimes1) \otimes\sigma(A)
\\&=\sigma(A) \otimes 1 \otimes 1 + 1 \otimes \sigma(A) \otimes 1 + 1\otimes1 \otimes\sigma(A)
\\&= 1 \otimes (\sigma(A) \otimes 1 + 1 \otimes\sigma(A)) + \sigma(A) \otimes 1 \otimes 1
\\&=(Id\otimes c) \circ c (\sigma(A))
\end{align*}
\end{proof}

\begin{remark}
Consider K (the ground field) as a  Klein graded associative algebra (the subspaces $L_r$,$L_s$ and $L_t$ of K being equal to \{0\}).  There exists a unique homomorphism of  Klein graded algebras
\begin{equation*}
\epsilon : U(L) \to K
\end{equation*}
such that
\begin{equation*}
\epsilon \circ \sigma = 0 \hspace{10pt},\hspace{10pt} \epsilon(1) =1
\end{equation*}
Identifying U(L) $\otimes$ K and K $\otimes$ U(L) with U(L) canonically we have
\begin{equation*}
(\epsilon \otimes id_U) \circ c = (id_U \otimes \epsilon) \circ c = id_U.
\end{equation*}
the homomorphism $\epsilon$ is called the counit of U(L).
\end{remark}
\begin{remark}
Let
\[\mu : U(L) \bar{\otimes} U(L) \to U(L)\]
be the linear mapping defined by the multiplication in $U(L)$ and let
\[\bar{\epsilon} : U(L) \to U(L) \]
be defined by
\[\bar{\epsilon}(X) = \epsilon(X) \cdot 1_U \; \;\text{ for all } x \in U(L) .\]
Also, define the linear mapping
\[S : U(L) \to U(L)\]
such that
\[S(XY) = \theta(x,y)S(X)S(Y)\]
\[\text{for all } X \in U(L)_a \; , \; Y \in U(L)_b \; \; ; a,b \in \mathcal{K}\]
\[S(S(A)) = - S(A)\]
\[\text{for all } \; A \in L\]
\[S(1) = 1\]
and from these properties we deduce that
\[S^2 = id.\]
Then
\[\mu \circ(S \otimes id_U)\circ c = \mu \circ (id_U \otimes S)\circ c = \bar{\epsilon}.\]
A linear mapping $\delta$ with this property is called an antipode.
\end{remark}
In view of these remarks the  Klein graded algebra U(L), equipped with the coproduct c, is what is called a  Klein graded Hopf algebra.

\section{Representations}\label{representations}
\subsection{The connection between representation of L and U(L)}

\begin{definition}
Let L be a Lie superalgebra and let V be a $Z_2$-graded vector space. Recall that $End(V)$ has a natural $Z_2$-gradation which converts it into an assiciative superalgebra.
A graded representation r of L in V is an even linear mapping
\begin{equation}
r : L \to Hom(V)
\end{equation}
such that
\begin{equation}\label{repdef}
r([A,B]) = r(A)r(B) - (-1)^{ab}r(B)r(A)
\end{equation}
\begin{center}
for all $A \in L_a, B \in L_b ; a,b \in Z_2$
\end{center}
\end{definition}
A $Z_2$-graded vector space equipped with a graded representation of L is called a (left) graded L-module.

\begin{remark}
The definition makes sense even if the vector space V is not graded provided we drop the requirement that r should be even. In this work we shall have no occasion to discuss these "non-graded" representations.
\end{remark}
Let U(L) be the enveloping algebra of L and let $s : L \to U(L)$ be the canonical mapping. From now on we shall identify L with a graded subspace of U(L) by means of s. Under this identification s is just the injection of L into U(L).

Let r be a graded representation of L in some graded vector space V. Due to the universal property of U(L) there exists a unique homomorphism of associative superalgebras
\begin{equation}
\bar{r} : U(L) \to End(V)
\end{equation}
which extends r, i.e. such that
\begin{equation}
\bar{r}(A) = r(A) \text{ for all } A \in L \hspace{10pt},\hspace{10pt} \bar{r}(1) = id.
\end{equation}
\[
\begin{tikzcd}
&L \arrow{d}{Id_L} \arrow{r}{r}
&EndV  \\
&U(L)\arrow{ur}{\bar{r}}
\end{tikzcd}
\]
In particular, we have
\begin{equation}
\bar{r}(U(L)_a)V_b \subset V_{a+b} \text{ for all } a,b \in Z_2.
\end{equation}
Hence $\bar{r}$ is a graded representation of the associative superalgebra U(L) in the graded vector space V or, using the modules language, V is a $Z_2$-graded left U(L)-module.
Conversely, suppose we are given a $Z_2$-graded left U(L)-module V; let
\begin{equation}
\omega : U(L) \to End(V)
\end{equation}
be the corresponding homomorphism of associative superalgebras. Then the restriction r of $\omega$ to L is a graded representation of L in V and $\bar{r} = \omega$.

Respectivly, we should give the definition of graded representation of a  Klein graded Lie algebra on a Klein graded vector space.

\begin{definition}
Let L be a  Klein graded Lie algebra and let V be a Klein graded vector space. Recall that $End(V)$ has a natural Klein gradation which converts it into an assiciative Klein graded algebra.
A graded representation r of L in V is an  linear mapping of degree e
\begin{equation}
r : L \to End(V)
\end{equation}
such that
\begin{equation}\label{repdef2}
r([A,B]) = r(A)r(B) - \theta(a,b)(B)r(A)
\end{equation}
\begin{center}
for all $A \in L_a, B \in L_b ; a,b \in \mathcal{K}$
\end{center}
\end{definition}

In view of this discussion the concepts of a "graded representation of L", a "graded L-module" and a "left graded U(L)-module" are completely equivalent for both Lie superalgebras and Klein graded Lie algebras. It depends on the circumstances which language is preffered.
\begin{definition}
A simple graded module(or equivalently a simple graded representation) is a module that does not contains any non-trivial submodules.
\end{definition}
\begin{proposition}
Let $V = V_0 \oplus V_1$ be a $Z_2$-graded vector space and let $V' = V_0' \oplus V_1'$ be the $Z_2$-graded vector space whose underlying vector space is equal to that of V but whose gradation is definded by
\begin{equation*}
V_0' = V_1 \hspace{10pt} ,  \hspace{10pt} V_1' = V_0.
\end{equation*}
Then $End(V) = End(V')$. 
\end{proposition}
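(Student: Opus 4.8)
The key observation is that $V$ and $V'$ have, by hypothesis, the same underlying vector space; consequently $End(V)$ and $End(V')$ coincide as associative algebras, since an endomorphism is just a linear self-map of that common vector space and composition does not refer to any grading. The only content of the statement is therefore that the two induced $\mathbb{Z}_2$-gradations on this common algebra agree. The plan is to unwind the definition \eqref{endvdefinition} of the grading on an endomorphism algebra and check that the homogeneous components built from $V$ and from $V'$ literally coincide.

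First I would record, directly from \eqref{endvdefinition}, that for the graded space $V = V_0 \oplus V_1$ one has
$$End(V)_0 = \{h : h(V_0) \subset V_0, \; h(V_1) \subset V_1\}, \qquad End(V)_1 = \{h : h(V_0) \subset V_1, \; h(V_1) \subset V_0\};$$
that is, the even endomorphisms are the block-diagonal ones and the odd endomorphisms are the block-off-diagonal ones. Next I would write out the same description for $V'$, substituting $V_0' = V_1$ and $V_1' = V_0$. Doing so shows that $End(V')_0$ consists of the maps preserving $V_0' = V_1$ and $V_1' = V_0$, which is exactly the condition defining $End(V)_0$; likewise $End(V')_1$ consists of the maps sending $V_0' = V_1$ into $V_1' = V_0$ and conversely, which is precisely the condition defining $End(V)_1$. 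Hence $End(V')_0 = End(V)_0$ and $End(V')_1 = End(V)_1$.

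To finish I would simply note that, the underlying associative structure being identical and the two homogeneous decompositions coinciding term by term, the graded algebras $End(V)$ and $End(V')$ are equal. There is no real obstacle here: the proposition is a definition-unwinding, reflecting the fact that the notion of a degree-preserving versus a degree-reversing endomorphism is symmetric under interchanging the roles of $V_0$ and $V_1$. The only point deserving care is to verify the equality \emph{as graded algebras}—checking both the even and the odd components, rather than merely the underlying ungraded algebra—which is exactly what the comparison above accomplishes.
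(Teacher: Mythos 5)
Your proof is correct and follows essentially the same route as the paper: both arguments unwind the definition of the gradation on $End(V)$ and observe that, since $V'_a = V_{a+1}$, a map homogeneous of degree $d$ with respect to $V$ is homogeneous of the same degree $d$ with respect to $V'$, so the homogeneous components coincide. The paper phrases this uniformly in an arbitrary degree $d \in \mathbb{Z}_2$ while you spell out the two components $End(V)_0$ and $End(V)_1$ explicitly, but this is only a cosmetic difference.
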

\begin{proof}
Let f $\in End(V)$ of degree $d \in Z_2$ such that
\begin{equation*}
f(V_a) \subset V_{a+d} \hspace{10pt}, \forall a \in Z_2.
\end{equation*}
or
\begin{equation*}
f(V_{a+1}) \subset V_{(a+1)+d} \hspace{10pt}, \forall (a+1) \in Z_2.
\end{equation*}
and
\begin{equation*}
f(V'_{a}) \subset V_{a+d}' \hspace{10pt}, \forall (a+1) \in Z_2.
\end{equation*}
and thus, f $\in End(V')$ of degree  $d \in Z_2$. We can repeat the same proccess backwards and the proof is complete.
\end{proof}

\begin{proposition}\label{76prop}
Let $V = V_e \oplus V_r \oplus V_s \oplus V_t$ be a Klein-graded vector space and let $V' = V_e' \oplus V_r' \oplus V_s' \oplus V_t'$ be the Klein-graded vector space whose underlying vector space is equal to that of V but whose gradation is defined by
\begin{equation*}
V_k' = V_{\sigma (k)} \hspace{10pt}, \forall k \in \mathcal{K}
\end{equation*}
and $\sigma$ is a permutation of the Klein group that satisfies the condition
\begin{equation*}
\sigma (ab) = \sigma(a) \sigma(b) \hspace{10pt}, \forall a,b \in \mathcal{K}.
\end{equation*}
Then $End(V) = End(V')$. Furthermore, a homomorphism of degree d in $End(V)$ is a homomorphism of degree $\sigma^{-1} (d)$ in $End(V')$.
\end{proposition}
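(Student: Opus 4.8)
The plan is to observe first that since $V$ and $V'$ share the same underlying vector space, the collection of all linear maps from the space to itself is literally the same object in both cases; hence $End(V) = End(V')$ as ungraded associative algebras and as vector spaces, and the entire content of the statement lies in comparing the two Klein gradations. This is exactly what the second assertion records, so I would organize the proof around computing the degree shift.

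First I would fix a homogeneous element $h \in End(V)_d$, so that by the defining relation \eqref{endvdefinition} we have $h(V_b) \subset V_{db}$ for every $b \in \mathcal{K}$. To determine the degree of $h$ regarded as an element of $End(V')$, I would evaluate $h$ on the homogeneous components $V'_c = V_{\sigma(c)}$, obtaining $h(V'_c) = h(V_{\sigma(c)}) \subset V_{d\,\sigma(c)}$.

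The key step is to rewrite the degree $d\,\sigma(c)$. Because $\sigma$ is a bijection of $\mathcal{K}$ satisfying $\sigma(ab) = \sigma(a)\sigma(b)$, it is a group automorphism, so $d = \sigma(\sigma^{-1}(d))$ and therefore $d\,\sigma(c) = \sigma(\sigma^{-1}(d))\,\sigma(c) = \sigma(\sigma^{-1}(d)\,c)$. Consequently $V_{d\sigma(c)} = V_{\sigma(\sigma^{-1}(d)\,c)} = V'_{\sigma^{-1}(d)\,c}$, which gives $h(V'_c) \subset V'_{\sigma^{-1}(d)\,c}$ for all $c \in \mathcal{K}$. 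By \eqref{endvdefinition} applied to $V'$ this says precisely that $h \in End(V')_{\sigma^{-1}(d)}$, establishing the claimed degree shift together with the inclusion $End(V)_d \subset End(V')_{\sigma^{-1}(d)}$.

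Finally I would promote this inclusion to equality. Since $\sigma^{-1}$ is again a permutation of $\mathcal{K}$ obeying the same multiplicativity condition, the identical argument with the roles of $V$ and $V'$ exchanged yields $End(V')_{d'} \subset End(V)_{\sigma(d')}$, and taking $d' = \sigma^{-1}(d)$ supplies the reverse inclusion. Hence $End(V)_d = End(V')_{\sigma^{-1}(d)}$ for every $d \in \mathcal{K}$, so the two gradings consist of the same homogeneous subspaces merely relabelled by the bijection $\sigma$, and summing over all $d$ recovers $End(V) = End(V')$. The only real subtlety is that the hypothesis $\sigma(ab) = \sigma(a)\sigma(b)$ is indispensable: it is exactly what lets one pull the fixed factor $d$ through $\sigma$ and identify the shifted degree as $\sigma^{-1}(d)$, and for a permutation that failed to be a group automorphism the relabelling would not send homogeneous maps to homogeneous maps at all.
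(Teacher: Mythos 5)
Your proposal is correct and follows essentially the same route as the paper: fix $f$ of degree $d$ in $End(V)$, write $d = \sigma(k)$ with $k = \sigma^{-1}(d)$, and use the multiplicativity $\sigma(ab)=\sigma(a)\sigma(b)$ to compute $f(V'_a) = f(V_{\sigma(a)}) \subset V_{\sigma(a)\sigma(k)} = V_{\sigma(ak)} = V'_{ak}$, which is exactly the paper's chain of inclusions. Your explicit reverse inclusion via $\sigma^{-1}$ is a slight tidying-up of what the paper leaves implicit (it spells this out only in the $\mathbb{Z}_2$ analogue), but the argument is the same.
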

\begin{proof}
Let f $\in$ Hom(V) of degree $d \in \mathcal{K}$ such that
\begin{equation*}
f(V_a) \subset V_{ad} \hspace{10pt}, \forall a \in \mathcal{K}.
\end{equation*}
Then there is a k such that $\sigma(k) = d$ and 
\begin{equation*}
f(V_a')  = f(V_{\sigma (a)}) \subset V_{\sigma (a) d} = V_{\sigma (a) \sigma(k)} =V_{\sigma (ak)} = V_{ak}' \hspace{10pt}, \forall a \in \mathcal{K}.
\end{equation*}
So, f is in Hom(V') and it's degree is k = $\sigma^{-1} (d)$.
\end{proof}
Consequently,on both $Z_2$-graded and Klein graded case, a graded representation of L in V is also a graded representation of L in $V'$ and vice versa. Nevertheless, these two representations are not necessarily isomorphic.

\subsection{Invariants}
\begin{definition}
Let L be a graded Lie algebra and let V be a graded L-module. An element x of V is called invariant with respect to the given representantion of L in V (or simply L-invariant) if
\begin{equation*}
Ax = 0 \hspace{10pt} \text{ for all } A \in L
\end{equation*}
\end{definition}
The set of all L-invariant elements of V is denoted by $V^L$. An element of V is L-invariant if and only if its homogeneous components are L-invariant. Hence $V^L$ is a graded subspace of V.

\begin{example}
Let L be a Lie superalgebra and let V be a graded L-module. If x is a homogeneous element of V then
\begin{equation*}
C(x) = \{A \in L | Ax = 0\}
\end{equation*}
is a graded subalgebra of L and x is C(x)-invariant. We call C(x) the graded subalgebra of L consisting of those elements of L which leave x invariant.
\end{example}

\subsubsection{Schur's lemma for Lie superalgebras}

\begin{lemma}
Suppose that the field K is algebraically closed. Let L be a Lie superalgebra and let V be a finite-dimensional simple graded L-module. Then
\begin{equation*}
Hom_L(V)_0 = K Id \hspace{10pt},\hspace{10pt} Hom_L(V)_1 = K u,
\end{equation*}
where either u=0 or else $u^2$=Id.
\end{lemma}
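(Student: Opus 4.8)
The plan is to prove this graded version of Schur's lemma by exploiting that $V$ is a finite-dimensional simple module over the algebraically closed field $K$, so that any module endomorphism has an eigenvalue, and to treat the even and odd parts separately. First I would set $H = \mathrm{Hom}_L(V)$, the algebra of all $L$-module endomorphisms of $V$, and observe that since the representation is graded, $H$ inherits the $\mathbb{Z}_2$-gradation $H = H_0 \oplus H_1$, where $f \in H_d$ means $f(V_a) \subset V_{a+d}$. I would note that $H$ is an associative algebra under composition and that $H_0 H_0 \subset H_0$, $H_0 H_1 \subset H_1$, $H_1 H_0 \subset H_1$, $H_1 H_1 \subset H_0$.

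For the even part, the key step is to take any $f \in H_0$. Since $f$ commutes with the $L$-action and preserves the gradation, and since $K$ is algebraically closed and $V$ is finite-dimensional, $f$ has an eigenvalue $\lambda \in K$. Then $f - \lambda\,\mathrm{Id}$ is again an even $L$-module endomorphism, but it has nontrivial kernel; this kernel is a graded $L$-submodule of $V$ (it is $L$-stable because $f-\lambda\,\mathrm{Id}$ commutes with $L$, and graded because $f-\lambda\,\mathrm{Id}$ is homogeneous of degree $0$). By simplicity of $V$, this kernel must be all of $V$, so $f = \lambda\,\mathrm{Id}$. This gives $\mathrm{Hom}_L(V)_0 = K\,\mathrm{Id}$.

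For the odd part, I would take $u \in H_1$. The crucial observation is that $u^2 \in H_0 = K\,\mathrm{Id}$ by the even case, so $u^2 = \mu\,\mathrm{Id}$ for some $\mu \in K$. If $u = 0$ we are done; otherwise, since $u$ is injective or at least nonzero and $u^2 = \mu\,\mathrm{Id}$, the scalar $\mu$ cannot be zero (if $\mu = 0$ then $\ker u$ would be a nontrivial graded submodule unless $u=0$, contradicting simplicity). Rescaling $u$ by $\mu^{-1/2}$ — possible since $K$ is algebraically closed — we may assume $u^2 = \mathrm{Id}$. Finally, to see $\dim H_1 \le 1$, I would take two nonzero elements $u, u' \in H_1$; then $u' u^{-1} = u' u \cdot \mu^{-1} \in H_0 = K\,\mathrm{Id}$, so $u'$ is a scalar multiple of $u$, whence $\mathrm{Hom}_L(V)_1 = K u$.

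The main obstacle I anticipate is the careful handling of the odd part: one must verify that $H_1$ is at most one-dimensional and that a normalization to $u^2 = \mathrm{Id}$ is always available, which hinges on ruling out $\mu = 0$ via simplicity and on the existence of square roots in the algebraically closed field. The even part is the standard classical Schur argument adapted to the graded setting, and the only subtlety there is checking that the eigenspace kernel is genuinely a graded submodule — which follows because $f$ is homogeneous of degree $0$ so its kernel is a graded subspace, and it is $L$-stable because $f$ commutes with the $L$-action.
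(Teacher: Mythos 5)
Your proof is correct, but it takes a genuinely different route from the paper's. The paper first establishes a stronger intermediate fact, namely that both homogeneous components $V_0$ and $V_1$ are simple modules over the even part $U(L)_0$ of the enveloping algebra (if $W_0\subset V_0$ were a proper $U(L)_0$-invariant subspace, then $W=U(L)W_0$ would be a proper graded $L$-submodule of $V$), and then invokes Schur's lemma for associative algebras to force an even $f$ to be a scalar $c_0$ on $V_0$ and $c_1$ on $V_1$, with simplicity of $V$ giving $c_0=c_1$. You bypass the enveloping algebra entirely: your even case is the classical direct argument, using that $\ker(f-\lambda\,\mathrm{Id})$ is a nonzero graded $L$-submodule because $f-\lambda\,\mathrm{Id}$ is homogeneous of degree $0$ and commutes with the action --- more elementary and self-contained, though it forgoes the reusable fact about $U(L)_0$-simplicity of the components. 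On the odd part both arguments begin identically ($u^2\in \mathrm{Hom}_L(V)_0=K\,\mathrm{Id}$, the case $u^2=0$, $u\neq 0$ excluded by the kernel/simplicity argument, then normalization by $\sqrt{c}$, available since $K$ is algebraically closed), but your uniqueness step is cleaner: a nonzero odd $u$ is invertible with $u^{-1}=\mu^{-1}u$, so any $u'\in \mathrm{Hom}_L(V)_1$ gives $u'u^{-1}\in \mathrm{Hom}_L(V)_0=K\,\mathrm{Id}$ and hence $u'\in Ku$ in one line. The paper instead compares two normalized odd maps $w,g$ with $w^2=g^2=\mathrm{Id}$ and concludes $w=\pm g$ from $(w-g)(w+g)=0$; as literally written that product equals $wg-gw$, which vanishes only after the additional (true) observation that $wg$ and $gw$ are scalars forced to be equal --- a justification your route never needs. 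One small point of care in your write-up: for \emph{odd} $u$ the intertwining relation holds only up to a sign, $u(Ax)=\pm A\,u(x)$ in the paper's color-homomorphism convention; this still makes $\ker u$ stable under the action, so your argument goes through, but the sign deserves a word.
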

Before we write the proof, we have to remember the schur lemma for an associative algebra with unit element which states that:
\begin{lemma}
Let V be a finite dimensional irreducible representation of an associative algebra A with unit element over an algebraically closed field k, and $f : V \to  V$ is a homorphism such that
\[
f(a)f(b)v = f(b)f(a)v
\]
\[\text{for all } a,b \in A, v \in V.\]
Then $f = c Id$ for some c in K.
\end{lemma}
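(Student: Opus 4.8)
The plan is to run the classical eigenvalue argument for Schur's lemma, reading the hypothesis that $f\colon V\to V$ is a homomorphism in the module-theoretic sense: $f$ commutes with the action of $A$ on $V$, i.e. $f(a\cdot v)=a\cdot f(v)$ for all $a\in A$, $v\in V$. The two inputs that do the real work are the algebraic closure of $K$ and the finite-dimensionality of $V$; together they guarantee that $f$, regarded merely as a $K$-linear endomorphism, possesses an eigenvalue in $K$. First I would record that $V\neq\{0\}$, since an irreducible module is nonzero by convention. Because $K$ is algebraically closed and $\dim_K V<\infty$, the characteristic polynomial of $f$ splits over $K$ and hence has a root $c\in K$; thus $c$ is an eigenvalue of $f$, and the eigenspace
\[
W=\{v\in V : f(v)=cv\}=\ker\!\left(f-c\,\mathrm{Id}\right)
\]
is a nonzero $K$-subspace of $V$.

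The second step is to observe that $W$ is in fact an $A$-submodule, not merely a subspace. For $v\in W$ and any $a\in A$, the commutation of $f$ with the action gives $f(a\cdot v)=a\cdot f(v)=a\cdot(cv)=c\,(a\cdot v)$, so $a\cdot v\in W$; hence $W$ is stable under $A$. This is exactly the point at which the homomorphism hypothesis on $f$ is used: it says that $f-c\,\mathrm{Id}$ is again a map of $A$-modules, so that its kernel is an $A$-submodule rather than an arbitrary linear subspace. Finally I would invoke irreducibility: since $V$ is simple and $W\neq\{0\}$ is a submodule, we must have $W=V$, which means $f(v)=cv$ for every $v\in V$, i.e. $f=c\,\mathrm{Id}$.

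There is no genuine obstacle here beyond correctly isolating the single essential feature of the hypothesis, namely that the $c$-eigenspace is $A$-stable. The only nontrivial external ingredient is the existence of the eigenvalue, which is where algebraic closure of $K$ enters; this existence fails if one drops either finite-dimensionality or algebraic closure, so I would flag those two assumptions as the load-bearing ones and keep the rest of the argument as a routine verification. When this lemma is applied to deduce the superalgebra Schur statement, the care required is only in choosing an appropriate ungraded submodule on which the ambient module is irreducible, so that the present lemma becomes directly applicable; the proof above is the self-contained associative-algebra kernel of that deduction.
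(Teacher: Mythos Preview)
Your argument is correct and is exactly the classical eigenvalue proof of Schur's lemma: pick an eigenvalue $c$ of $f$ (this exists by finite-dimensionality and algebraic closure), observe that $\ker(f-c\,\mathrm{Id})$ is a nonzero $A$-submodule because $f$ is an $A$-module map, and conclude $\ker(f-c\,\mathrm{Id})=V$ by simplicity.

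As for comparison with the paper: the paper does not actually prove this lemma. It states it and then writes ``Details and a proof on this lemma can be found on \cite{manyauthors}'' (the Etingof et al.\ lecture notes), and proceeds directly to use it in the proof of the graded Schur lemma. So there is no in-paper proof to compare against; your proof is the standard one that the cited reference gives as well. Your reading of the somewhat garbled hypothesis (the displayed condition ``$f(a)f(b)v=f(b)f(a)v$'' is not literally sensible for $f\colon V\to V$) as ``$f$ is an $A$-module endomorphism'' is the intended one, and it matches how the lemma is actually invoked later when the paper applies it to elements of $\mathrm{Hom}_L(V)_0$ restricted to the simple $U(L)_0$-modules $V_0$ and $V_1$.
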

Details and a proof on this lemma can be found on \cite{manyauthors}. Now we can prove the shcur's lemma for a Lie superalgebra case:
\begin{proof}
First we have to prove that both homogeneous compoments $V_0$ and $V_1$ are simple $U(L)_0$-modules. Suppose that $V_0$ isn't simple $U(L)_0$-module, then we can find a subspace $W_0$ of $V_0$ such that
\begin{equation*}
U(L)_0 W_0 \subset W_0
\end{equation*}
now set 
\begin{equation*}
W = U(L) W_0
\end{equation*}
it is not equal to zero due to the selection and we get that
\begin{equation}
\begin{split}
  L_0 W \subset L_0 U(L) W_0 \subset U(L) W_0 =W\\
  L_1 W \subset L_1 U(L) W_0 \subset U(L) W_0 =W\\
\end{split}
\end{equation}

Hence, we have that W is a L-submodule of V which is a contradiction. The same holds for $V_1$ $U(L)_0$-module, so both of them are simple $U(L)_0$-modules. 

Now, let f in $Hom(V)_0$. Since $U(L)_0$ is an unitary assosiative algebra, the restriction of f on $V_0$(resp. on $V_1$) is some constant $c_0$(resp. $c_1$). Since V is simple, schur's lemma for associative algebras appLies again and we have $c_0 = c_1$ and so $f = c_0 id$

Now, let f in $Hom(V)_1$. Then$ f^2$ is in $Hom(V)_0$ and so $f^2 = c id$ for some $c \in K$. If c=0 then $Kerf \neq 0$ (remember that kernel is a L-submodule of V) and so f=0. Otherwise, set $g = f/\sqrt{c}$. Let w be another odd endomorphism such that $w^2 = id$. Then (w + g), (w-g) are also odd endomorphisms. Therefore (w-g) (resp. (w+g)) is either isomorphism or zero. Since  (w-g)(w+g) = 0, it impLies that w = g or w=-g, on any case $f = c' id$.
\end{proof}

\subsubsection{Schur's lemma for Klein-graded Lie algebras}
\begin{lemma}\label{71lemma}
Suppose that the field K is algebraically closed. Let L be a Klein-graded Lie algebra and let V be a finite-dimensional simple graded L-module. Then
\begin{equation*}
Hom_L(V)_e = K Id \hspace{10pt},\hspace{10pt} Hom_L(V)_r = K u_r,\hspace{10pt} Hom_L(V)_s = K u_s,\hspace{10pt} Hom_L(V)_t = K u_t
\end{equation*}
where either $u_k=0$ or else $u_k^2$=Id for all k $\in \mathcal{K}  - \{e\}$
\end{lemma}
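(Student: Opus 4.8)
The plan is to mimic the structure of the proof just given for the Lie superalgebra case, which rests on two pillars: first, reducing the problem to $\mathrm{Hom}_L(V)_e = K\,\mathrm{Id}$ by an application of the associative-algebra Schur lemma to $U(L)_e$; and second, handling each non-identity degree $k \in \{r,s,t\}$ by squaring. First I would show that each homogeneous component $V_k$ ($k \in \mathcal{K}$) is a simple $U(L)_e$-module. To do this, suppose $V_k$ contains a proper nonzero $U(L)_e$-submodule $W_k$; setting $W = U(L)\,W_k$ gives a nonzero subspace, and using the grading $U(L)_a\,V_b \subset V_{ab}$ together with $U(L)_e\,W_k \subset W_k$ one checks $L_a\,W \subset W$ for every $a \in \mathcal{K}$, so $W$ is an $L$-submodule of $V$, contradicting simplicity. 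Hence all four components are simple $U(L)_e$-modules.

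Next, for $f \in \mathrm{Hom}_L(V)_e$, the map $f$ preserves each $V_k$ and commutes with the $U(L)_e$-action, so the associative Schur lemma forces $f|_{V_k} = c_k\,\mathrm{Id}$ for scalars $c_k \in K$. It remains to see that all four scalars coincide. Since $V$ is a \emph{simple} $L$-module, the nonzero off-diagonal brackets $[L_r, V_e] \subset V_r$, $[L_s, V_e] \subset V_s$, etc., must be nonzero (otherwise a single component would already be an $L$-submodule), and commutativity of $f$ with these degree-raising operators propagates the scalar: if $A \in L_r$ and $v \in V_e$ then $f(Av) = A f(v) = c_e\,Av$, while $f(Av) = c_r\,Av$, giving $c_e = c_r$ wherever $Av \neq 0$. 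Running this across the connecting components yields $c_e = c_r = c_s = c_t$, so $f = c_e\,\mathrm{Id}$, establishing $\mathrm{Hom}_L(V)_e = K\,\mathrm{Id}$.

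Finally I would treat a nonzero degree $k \in \{r,s,t\}$. For $f \in \mathrm{Hom}_L(V)_k$, the composite $f^2$ lies in $\mathrm{Hom}_L(V)_{k^2} = \mathrm{Hom}_L(V)_e = K\,\mathrm{Id}$ because $k^2 = e$ in the Klein group, so $f^2 = c\,\mathrm{Id}$ for some $c \in K$. If $c = 0$ then $\ker f$ is a nonzero $L$-submodule (note $\ker f \ne 0$ since $f^2=0$ on a finite-dimensional space), hence $\ker f = V$ and $f = 0$; otherwise $u_k := f/\sqrt{c}$ satisfies $u_k^2 = \mathrm{Id}$, and the argument with $(w \pm u_k)$ from the superalgebra proof shows $\mathrm{Hom}_L(V)_k = K\,u_k$. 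Carrying this out for each of $r,s,t$ completes the statement.

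The main obstacle I anticipate is the second step: one must verify carefully that the four scalars $c_k$ agree. This is exactly where the Klein case is more delicate than the $\mathbb{Z}_2$ case, since there are now three off-diagonal degrees rather than one, and one must confirm that simplicity of $V$ forces enough of the connecting maps $L_k \cdot (-)$ to be nonzero so that the equality of scalars propagates across \emph{all} components rather than merely between two of them.
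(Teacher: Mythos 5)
Your overall architecture is the paper's: step (i), showing each homogeneous component $V_k$ is a simple $U(L)_e$-module via $W = U(L)W_k$, is identical, and step (iii), squaring a degree-$k$ map into $Hom_L(V)_e$, splitting the cases $c=0$ and $c\neq 0$, and reusing the $(w\pm g)$ argument, is also identical. The gap is in your middle step. You justify the propagation of scalars by asserting that $L_r\cdot V_e \neq 0$ ``otherwise a single component would already be an $L$-submodule.'' That inference is false in the Klein case: $V_e$ is an $L$-submodule only if \emph{all three} off-diagonal actions $L_rV_e$, $L_sV_e$, $L_tV_e$ vanish, so the vanishing of any single one is perfectly compatible with simplicity. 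And the one-step connection can genuinely vanish while $V$ remains simple with all four components nonzero --- take for instance $L_r = 0$ (a legitimate Klein-graded Lie algebra), where $V_r$ is reached from $V_e$ only through composites such as $L_tL_sV_e \subset V_{ts\cdot e} = V_r$. So your propagation, as written, does not establish $c_e = c_r$; this is exactly the point you yourself flagged as ``the main obstacle,'' and the justification you supplied for it does not hold.

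The repair is to replace single elements of $L_k$ by the enveloping algebra: since $V$ is simple and $V_a \neq 0$, the graded submodule $U(L)V_a$ equals $V$, hence $V_b = U(L)_{ba}V_a$ for every $b$ (recall $a^{-1}=a$ in $\mathcal{K}$); writing elements of $U(L)_{ba}$ as products of homogeneous elements of $L$ and commuting $f$ past them factor by factor yields $c_b = c_a$ on all nonzero components, with no assumption that any single $L_k$ acts nontrivially. Alternatively --- and this is what the paper actually does --- skip the propagation entirely: an element of $Hom_L(V)_e$ commutes on the nose with the full $U(L)$-action (since $\theta(e,x)=1$ for all $x$), and $V$ is a simple module over the associative unital algebra $U(L)$, so the associative Schur lemma quoted earlier in the paper gives $f = c\,\mathrm{Id}$ in one stroke, making the component-by-component comparison of scalars unnecessary. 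With either fix the rest of your proposal goes through as in the paper.
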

\begin{proof}
First we have to prove that all homogeneous compoments $V_e$ , $V_r$,$V_s$,$V_t$ are simple $U(L)_e$-modules. Suppose that $V_e$ isn't simple $U(L)_e$-module, then we can find a subspace $W_e$ of $V_e$ such that
\begin{equation*}
U(L)_e W_e \subset W_e
\end{equation*}
now set 
\begin{equation*}
W = U(L) W_e
\end{equation*}
it is not equal to zero due to the selection and we get that
\begin{equation}
\begin{split}
  L_e W \subset L_e U(L) W_e \subset U(L) W_e =W\\
  L_r W \subset L_r U(L) W_e \subset U(L) W_e =W\\
  L_s W \subset L_s U(L) W_e \subset U(L) W_e =W\\
  L_t W \subset L_t U(L) W_e \subset U(L) W_e =W\\
\end{split}
\end{equation}

Hence, we have that W is a L-submodule of V which is a contradiction. The same holds for $V_r$,$V_s$,$V_t$ $U(L)_e$-module, so all of them are simple $U(L)_e$-modules. 

Now, let f in $Hom(V)_e$. Since $U(L)_e$ is an unitary assosiative algebra, the restriction of f on $V_e$(resp. on $V_r$) is some constant $c_e$(resp. $c_r$). Since V is simple, schur's lemma for associative algebras appLies again and we have $c_e = c_r$ and so $f = c_e id$

Now, let f in $Hom(V)_k$ for some k in \{r,s,t\}. Then$ f^2$ is in $Hom(V)_e$ and so $f^2 = c id$ for some $c \in K$. If c=0 then $Kerf \neq 0$ (remember that kernel is a L-submodule of V) and so f=0. Otherwise, set $g = f/\sqrt{c}$. Let w be another odd endomorphism such that $w^2 = id$. Then (w + g), (w-g) are also odd endomorphisms. Therefore (w-g) (resp. (w+g)) is either isomorphism or zero. Since  (w-g)(w+g) = 0, it implies that w = g or w=-g, on any case $f = c' id$.
\end{proof}

\begin{remark}\label{712remark}
We know that if a homomorphism f is in $Hom(V)_a$ and g is in $Hom(V)_b$ then fg is in $Hom(V)_ab$. This applied to the previous homomorphisms lead us to:
\begin{equation*}
u_a u_b = u_{ab}
\end{equation*}
for a,b in \{r,s,t\}.
\end{remark}
\subsubsection{Lie superalgebra: The supertrace and the killingform}
Suppose we are given a Lie superalgebra L and three graded L-modules V,W and U. A billinear mapping $g:V \times W \to U$ which is homogeneous of degree b is L-invariant if and only if
\begin{equation*}
Ag(x,y) = (-1)^{ab}g(Ax,y) +  (-1)^{a(b+c)}g(x,Ay)
\end{equation*}
\begin{center}
for all $A \in L_a, x \in V_c, y \in W ; a,c \in Z_2$
\end{center}
For every bilinear mapping  $g:V \times W \to U$ we define a bilinear mapping  $sg:V \times W \to U$ by
\begin{equation*}
sg(y,x) = (-1)^{cr}g(x,y)
\end{equation*}
\begin{center}
for all $x \in V_c, y \in W_r ; c,r \in Z_2$.
\end{center}
The mapping $g \to sg$ of B(V,W;U) into B(W,V;U) is an isomorphim of graded L-modules, hence g is L-invariant if and only if sg is L-invariant. This leads to the following definition.

\begin{definition}
Let U and V be two $Z_2$-graded vector spaces. A bilinear mapping of $V \times V$ into U is called supersymmetric / skew-supersymmetric if $sg \pm g$, i.e. if
\begin{equation*}
b(y,x) = \pm (-1)^{ab}b(x,y)
\end{equation*}
\begin{center}
for all $x \in V_a, y \in V_b ; a,b \in Z_2$.
\end{center}
\end{definition}

\begin{example}
Recall that the product mapping in a Lie superalgebra is skew-supersymmetric.
\end{example}

Let V be a finite-dimensional $Z_2$-graded vector space and let
\begin{equation*}
\gamma : V \to V
\end{equation*}
be the linear mapping which satisfies
\begin{equation*}
\gamma(x) = (-1)^a x \hspace{10pt} \text{ if } x \in V_a ; a \in Z_2.
\end{equation*}
We define a linear form str on the general linear Lie superalgebra pl(V) by
\begin{equation*}
str(A) = Tr(\gamma A) \hspace{10pt} \text{ for all } A \in pl(V)
\end{equation*}
and call str the supertrace. The linear form str is even and pl(V)-invariant:
\begin{equation*}
str([A,B]) = 0  \hspace{10pt} \text{ for all } A,B \in pl(V)
\end{equation*}
or, equivalently,
\begin{equation*}
str(AB) = (-1)^{ab}str(BA)
\end{equation*}
\begin{center}
for all $A \in  Hom(V)_a$ , $B \in Hom(V)_b$ ; $a,b \in Z_2$.
\end{center}

\begin{definition}
Let L be a Lie superalgebra and let V be a finite-dimensional graded L-module. The n-linear form on L which is defined by 
\begin{equation*}
(A^1,.......,A^n) \to str(A^1_V.......A^n_V)
\end{equation*}
\begin{center}
for all $A^i \in L$ , $1 \leq i \leq n$,
\end{center}
is even and L-invariant (with respect to the adjoint representation of L).
\end{definition}
The n-linear form defined above called the n-linear form associated with the graded L-module V. The most important case is obtained if we choose V=L and n=2. This special case is described below, and it is known as killing form.
\begin{definition}
Let L be a finite-dimensional Lie superalgebra and let $\gamma$ be the automorphism of L such that
\begin{equation*}
\gamma(A) = (-1)^a A \text{  for all } A \in L_a ; a \in Z_2.
\end{equation*}
The bilinear form $\phi$ which is defined by
\begin{equation*}
\phi(A,B) = str(adA adB) = Tr(\gamma adA adB)
\end{equation*}
for all A,B $\in L$. $\phi$ is called the killing form of L ; it is even, invariant and supersymmetric.
\end{definition}

\begin{proposition}
Let L be a Lie superalgebra and let H be a finite-dimensional graded ideal of L. Supose we are given a non-degenerate homogeneous bilinear form b on H of degree $\beta$ and a homogeneous n-linear form h of H of degree $\zeta$. We assume that b and h are L-invariant if the ideal H considered as a graded submodule of  L.

Let $(E_i)_{1 \leq i \leq p}$ be a basis of H consisting of homogeneous elements and let $\epsilon_i$ be the degree of $E_i$. We introduce a second basis $(F_i)_{1 \leq i \leq p}$ of H by the condition that
\begin{equation*}
b(F_j,E_i) = \delta_{ij} \hspace{10pt} , \hspace{10pt} 1 \leq i,j \leq p.
\end{equation*}
Define
\begin{equation*}
X = \sum_{1 \leq i_1,...,i_n \leq p} (-1)^{\beta \sigma(i_1,...,i_n)} h(E_{i_1},...,E_{i_n}) F_{i_n}....F_{i_1}
\end{equation*}
\begin{center}
with $\sigma(i_1,...,i_n) = \sum^n_{s=1}s\epsilon_{i_s}$.
\end{center}
Then X  is a homogeneous L-invariant element of U(L) of degree $\zeta + n\beta$, hence
\begin{equation*}
XY = (-1)^{(\zeta + n\beta)}YX \hspace{10pt} \text{for all } Y \in U(L)_a ; a \in Z_2.
\end{equation*}
The element X does not depend on the choice of the basis $(E_i)_{1 \leq i \leq p}$.
\end{proposition}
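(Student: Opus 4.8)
The plan is to establish the three assertions in turn --- homogeneity, invariance, and independence of the chosen basis. Throughout, I write $\mathrm{ad}_A X = AX - (-1)^{a\deg X}XA$ for the graded commutator in $U(L)$, so that $X$ being invariant for the adjoint representation is precisely the condition $\mathrm{ad}_A X = 0$ for every homogeneous $A \in L_a$; since $U(L)$ is generated by $L$ together with $1$ and $\mathrm{ad}_A$ is a graded derivation, this single condition already yields $XY = (-1)^{(\zeta+n\beta)a}YX$ for all $Y \in U(L)_a$. Homogeneity is the quick part. Because $b$ is homogeneous of degree $\beta$ and $K$ sits in degree $0$, the relation $b(F_i,E_i)=1\neq 0$ forces $\deg F_i = \epsilon_i + \beta$; for the same reason $h$ of degree $\zeta$ makes $h(E_{i_1},\dots,E_{i_n})$ vanish unless $\epsilon_{i_1}+\dots+\epsilon_{i_n}=\zeta$. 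Hence in every surviving term of $X$ the monomial $F_{i_n}\cdots F_{i_1}$ has degree $\sum_s(\epsilon_{i_s}+\beta)=\zeta+n\beta$, so $X$ is homogeneous of degree $\zeta+n\beta$.

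For invariance, the first step is to transport the adjoint action on $(E_i)$ to an action on the dual basis $(F_i)$ by means of the invariance of $b$. Writing $[A,E_i]=\sum_k \lambda^{A}_{ik}E_k$ (legitimate since $H$ is an ideal), the $L$-invariance of $b$, which for a $K$-valued form of degree $\beta$ reads $b([A,x],y)=-(-1)^{ac}b(x,[A,y])$ for $x\in H_c$, gives after pairing against the $E_j$ the contragredient law $[A,F_i] = -\sum_k (-1)^{a(\epsilon_i+\beta)}\lambda^{A}_{ki}F_k$. The second step is to expand $\mathrm{ad}_A X$ by applying the graded Leibniz rule to each monomial $F_{i_n}\cdots F_{i_1}$ and then to substitute this dual-basis law. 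After reindexing so that $\mathrm{ad}_A$ is displaced from the $F$-monomial onto the arguments of $h$, the bracket becomes, up to one overall sign, the $L$-invariance identity of the $n$-linear form $h$, namely $\sum_{s}(-1)^{a(\zeta+\epsilon_{i_1}+\dots+\epsilon_{i_{s-1}})}h(E_{i_1},\dots,[A,E_{i_s}],\dots,E_{i_n})=0$, which holds because $h$ is $L$-invariant and $K$-valued. This forces $\mathrm{ad}_A X=0$.

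I expect the sign bookkeeping in this second step to be the main obstacle. Three families of signs must conspire: the Koszul signs produced by moving $\mathrm{ad}_A$ across the $F$'s in the Leibniz expansion, the factor $-(-1)^{a(\epsilon_i+\beta)}$ coming from the contragredient law, and the prefactor $(-1)^{\beta\sigma(i_1,\dots,i_n)}$ built into $X$ with $\sigma(i_1,\dots,i_n)=\sum_s s\,\epsilon_{i_s}$. The claim is that their product reproduces exactly the weights $(-1)^{a(\zeta+\epsilon_{i_1}+\dots+\epsilon_{i_{s-1}})}$ demanded by the invariance identity for $h$. The exponent $\sigma$ is evidently tailored so that reversing the order of the factors $F_{i_n}\cdots F_{i_1}$ and sliding $\mathrm{ad}_A$ into position $s$ yields precisely these weights, and the cleanest route is to verify the match position by position in $s$ rather than all at once.

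Finally, for independence of the basis I would use that $\sum_i F_i \otimes E_i \in H\otimes H$ is the canonical copairing determined by the nondegenerate form $b$, hence basis-independent: under a change $E_i'=\sum_j P_{ij}E_j$ the dual basis transforms contragrediently, and the two tensors coincide. Since $X$ is obtained by contracting the invariant tensor $h$ against $n$ copies of this copairing and multiplying the resulting $F$-factors in $U(L)$ in the prescribed order with the prescribed signs, and since that recipe refers to the bases only through the copairing, $X$ is unchanged. The one point needing care is, once more, that the prefactor $(-1)^{\beta\sigma}$ together with the fixed order of factors makes the contraction well defined independently of the basis; this is checked by the same sign analysis as in the invariance step.
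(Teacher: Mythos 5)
The paper does not actually prove this proposition: it states it and refers to \cite{scheunert}, so there is no in-paper argument to measure you against. Your outline is the standard direct computation (essentially the one in Scheunert's notes), and every structural ingredient is right: the homogeneity argument; the contragredient law $[A,F_i]=-(-1)^{a(\epsilon_i+\beta)}\sum_k\lambda^A_{ki}F_k$, correctly derived from the invariance of $b$; the Leibniz expansion of $\mathrm{ad}_A$ over the monomials $F_{i_n}\cdots F_{i_1}$; the reduction to the invariance identity for $h$; and the copairing argument for basis independence, which works because a change between homogeneous bases is automatically degree-preserving, so no Koszul signs enter the transition. You even silently repair a typo in the statement: the supercommutation exponent must be $(\zeta+n\beta)a$, not $\zeta+n\beta$.

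The one place you stop short of a proof is the step you yourself flag: you assert, but do not verify, that the three sign families conspire. They do, and the check is short enough to record here. Fix an output monomial $F_{j_n}\cdots F_{j_1}$ arising when $\mathrm{ad}_A$ hits position $s$ of an input monomial carrying index $i_s$ there. In every surviving term, $\lambda^A_{j_s i_s}\neq 0$ forces $\epsilon_{i_s}=\epsilon_{j_s}+a$, hence $\sigma(j_1,\dots,i_s,\dots,j_n)=\sigma(j_1,\dots,j_n)+sa$, so the prefactor contributes $(-1)^{\beta\sigma(j)}(-1)^{a\beta s}$. The Koszul sign from sliding $\mathrm{ad}_A$ past $F_{j_n},\dots,F_{j_{s+1}}$ together with the contragredient factor contributes the exponent
\begin{equation*}
a\Bigl((n-s+1)\beta+\sum_{r\geq s}\epsilon_{j_r}+a\Bigr),
\end{equation*}
and nonvanishing of $h$ gives $\sum_r\epsilon_{j_r}=\zeta+a$, so this equals $a(n-s+1)\beta+a\zeta+a(\epsilon_{j_1}+\cdots+\epsilon_{j_{s-1}})$ modulo $2$. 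Adding the $a\beta s$ from the prefactor kills the $s$-dependence of the $\beta$-part: the total sign is $(-1)^{\beta\sigma(j)+a\beta(n+1)}\,(-1)^{a(\zeta+\epsilon_{j_1}+\cdots+\epsilon_{j_{s-1}})}$, whose first factor is independent of $s$ and $i_s$. The coefficient of $F_{j_n}\cdots F_{j_1}$ in $\mathrm{ad}_A X$ is therefore a fixed sign times
\begin{equation*}
\sum_{s}\sum_{i_s}(-1)^{a(\zeta+\epsilon_{j_1}+\cdots+\epsilon_{j_{s-1}})}\lambda^A_{j_s i_s}\,h(E_{j_1},\dots,E_{i_s},\dots,E_{j_n}),
\end{equation*}
which is exactly the $L$-invariance identity for $h$ and vanishes. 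Note the vanishing occurs at the level of scalar coefficients, so you never need linear independence of the monomials $F_{j_n}\cdots F_{j_1}$ in $U(L)$. With this computation inserted, your proposal is a complete and correct proof.
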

A proof can be found on \cite{scheunert}. The element X is called generalized Casimir element of U(L), it supercommutes with every element of U(L).

\subsubsection{Klein-graded Lie algebra: The supertrace and the killingform}
Let L be a Klein-graded Lie algebra and three graded L-modules V,W and U. A billinear mapping $g:V \times W \to U$ which is homogeneous of degree b is L-invariant if and only if
\begin{equation*}
Ag(x,y) = \theta(a,b)g(Ax,y) + \theta(a,bc)b(x,Ay)
\end{equation*}
\begin{center}
for all $A \in L_a, x \in V_c, y \in W ; a,c \in\mathcal{K}$
\end{center}
For every bilinear mapping  $g:V \times W \to U$ we define a bilinear mapping  $sg:V \times W \to U$ by
\begin{equation*}
sg(y,x) = \theta(c,r)g(x,y)
\end{equation*}
\begin{center}
for all $x \in V_c, y \in W_r ; c,r \in \mathcal{K}$.
\end{center}
The mapping $g \to sg$ of B(V,W;U) into B(W,V;U) is an isomorphim of graded L-modules, hence g is L-invariant if and only if sg is L-invariant. This leads to the following definition.

\begin{definition}
Let U and V be two Klein graded vector spaces. A bilinear mapping of $V \times V$ into U is called supersymmetric / skew-supersymmetric if $sg \pm g$, i.e. if
\begin{equation*}
b(y,x) = \pm \theta(a,b)b(x,y)
\end{equation*}
\begin{center}
for all $x \in V_a, y \in V_b ; a,b \in\mathcal{K}$.
\end{center}
\end{definition}

\begin{example}
Recall that the product mapping in a Lie superalgebra is skew-supersymmetric.
\end{example}

Let V be a finite-dimensional Klein graded vector space and let
\begin{equation*}
\gamma : V \to V
\end{equation*}
be the linear mapping which satisfies
\begin{equation*}
\gamma(x) = \theta(a,a)x \hspace{10pt} \text{ if } x \in V_a ; a \in \mathcal{K}.
\end{equation*}
We define a linear form str on the general linear Lie superalgebra pl(V) by
\begin{equation*}
str(A) = Tr(\gamma A) \hspace{10pt} \text{ for all } A \in pl(V)
\end{equation*}
and call str the supertrace. The linear form str is even(degree e) and pl(V)-invariant:
\begin{equation*}
str([A,B]) = 0  \hspace{10pt} \text{ for all } A,B \in pl(V)
\end{equation*}
or, equivalently,
\begin{equation*}
str(AB) = \theta(a,b)str(BA)
\end{equation*}
\begin{center}
for all $A \in  Hom(V)_a$ , $B \in Hom(V)_b$ ; $a,b \in \mathcal{K}$.
\end{center}

\begin{definition}
Let L be a Klein graded Lie algebra and let V be a finite-dimensional graded L-module. The n-linear form on L which is defined by 
\begin{equation*}
(A^1,.......,A^n) \to str(A^1_V.......A^n_V)
\end{equation*}
\begin{center}
for all $A^i \in L$ , $1 \leq i \leq n$,
\end{center}
is even and L-invariant (with respect to the adjoint representation of L).
\end{definition}
The n-linear form defined above called the n-linear form associated with the graded L-module V. The most important case is obtained if we choose V=L and n=2. This special case is described below, and it is known as killing form.

\begin{definition}
Let L be a finite-dimensional Lie superalgebra and let $\gamma$ be the automorphism of L such that
\begin{equation*}
\gamma(A) = \theta(a,a) A \text{  for all } A \in L_a ; a \in \mathcal{K}.
\end{equation*}
The bilinear form $\phi$ which is defined by
\begin{equation*}
\phi(A,B) = str(adA adB) = Tr(\gamma adA adB)
\end{equation*}
for all A,B $\in L$. We call $\phi$ the killing form of the Klein graded Lie algebra L. $\phi$ has degree e,it is invariant and "kleinsymmetric".
\end{definition}

\section{Induced representations}\label{inducedsrepresentations}
In this section if not specified, L is a Lie superalgebra or a Klein-graded Lie algebra. When it is necessary we will seperate the cases.

\begin{proposition}
Let L be a graded Lie algebra($Z_2$ or $\mathcal{K}$-grading) and L' a subalgebra of L. Also, let $(E_j)_{j \in J})$ be a family of homogeneous elements of L such that the images of the elements $E_j$ under the canonical mapping $L \to L/L'$ form a basis of the vector space$ L/L'$. We assume that the index set J is totally ordered.

Let H be the set of all finite sequences $(j_1,....,j_r)$ in J such that
\begin{equation*}
\begin{split}
r \geq 0 \text{ arbitrary} \\
j_1 \leq j_2 \leq ... \leq j_r \\
j_p <  j_{p+1} \text{ if } E_{j_p} \text{ and } E_{j_{p+1}} \text{ are odd}.
\end{split}
\end{equation*}
If $N=(j_1,.....,j_r)$ is a sequence of this type we define
\begin{equation*}
E_N = E_{j_1} E_{j_2} ...... E_{j_r}
\end{equation*}
(by convention $E_{\varnothing}$ = 1).

Then the family $(E_N)_{N \in H}$ is a basis of the left as well as of the right $U(L')$-module U(L).
\end{proposition}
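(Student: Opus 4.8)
The plan is to derive this relative statement directly from the absolute Poincar\'e--Birkhoff--Witt theorem already established for $U(L)$, by separating the basis of $L$ into an $L'$-block and a complementary block. First I would complete the given family to a full homogeneous basis of $L$: since $L'$ is a graded subalgebra, I choose a homogeneous basis $(E_i)_{i\in I'}$ of $L'$; because the images of the $(E_j)_{j\in J}$ form a basis of $L/L'$, the union $(E_i)_{i\in I'}\cup(E_j)_{j\in J}$ is a homogeneous basis of $L$. I would then fix a total order on the combined index set $I'\cup J$ in which every element of $I'$ precedes every element of $J$, while keeping the given order within $J$ (and any chosen order within $I'$).

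Next I would invoke PBW for $U(L)$ relative to this ordered homogeneous basis: the ordered monomials (nondecreasing, and strictly increasing on any pair of odd basis vectors) form a basis of $U(L)$. Because all indices from $I'$ come before all indices from $J$, each such ordered monomial factors uniquely as $E_M E_N$, where $E_M=E_{i_1}\cdots E_{i_k}$ is an ordered monomial in the $I'$-basis and $E_N=E_{j_1}\cdots E_{j_r}$ with $N\in H$; the parity constraint causes no interference across the two blocks, since distinct basis vectors always carry distinct, hence strictly comparable, indices. Applying PBW to $U(L')$ itself, the monomials $E_M$ run exactly over a basis of $U(L')$. Hence the PBW basis of $U(L)$ is precisely $\{E_M E_N : E_M \text{ a basis monomial of } U(L'),\ N\in H\}$, and since $E_M\in U(L')$ this gives $U(L)=\bigoplus_{N\in H}U(L')E_N$ with the $E_N$ linearly independent over $U(L')$ (if $\sum_N u_N E_N=0$, expand each $u_N$ in the $E_M$ and use the $U(L)$-basis property to kill all coefficients). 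Thus $(E_N)_{N\in H}$ is a basis of the left $U(L')$-module $U(L)$.

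For the right-module assertion I would repeat the argument with the opposite convention, ordering the combined index set so that every element of $J$ precedes every element of $I'$ while leaving the internal order on $J$ --- and therefore the set $H$ and the elements $E_N$ --- unchanged. The resulting PBW basis of $U(L)$ then consists of the products $E_N E_M$, yielding $U(L)=\bigoplus_{N\in H}E_N\,U(L')$ and exhibiting the same family $(E_N)_{N\in H}$ as a basis of the right $U(L')$-module.

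The step I expect to require the most care is the unique factorization $E_M E_N$ together with the check that the ordering conventions are compatible with the clauses defining $H$: in particular, that placing the block $I'$ entirely before (resp.\ after) the block $J$ never conflicts with the ``strictly increasing on odd elements'' requirement, and that the internal order on $J$, hence $H$ and the $E_N$, is genuinely independent of where the block $I'$ is inserted. Everything else is a bookkeeping translation between the absolute PBW bases of $U(L)$ and $U(L')$; the grading and color data enter only through the PBW theorem invoked as a black box, exactly as in the ordinary Lie algebra case.
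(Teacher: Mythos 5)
Your proof is correct. The paper itself states this proposition without giving a proof (it is lifted from the development of induced representations in \cite{scheunert}), and your two-block argument --- completing the $(E_j)_{j\in J}$ by a homogeneous basis of $L'$, totally ordering the $L'$-indices entirely before (resp.\ after) the $J$-indices, and reading off the unique factorization $E_M E_N$ (resp.\ $E_N E_M$) from the absolute Poincar\'e--Birkhoff--Witt basis of $U(L)$, with PBW for $U(L')$ identifying the $E_M$ with a basis of $U(L')$ --- is exactly the standard proof, including the two points you rightly flag: the strictness condition on odd elements causes no interference at the block boundary because the blocks are strictly separated in the order, and freeness over $U(L')$ (equivalently, injectivity of $U(L')\to U(L)$ on the relevant coefficients) follows by expanding each $u_N$ in the PBW basis of $U(L')$ and invoking the linear independence of the monomials $E_M E_N$ in $U(L)$.
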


Let V be a left U(L')-module. Regarding U(L) as a right graded U(L')-module we can construct the tensor product
\begin{equation*}
\bar{V} = U(L) \otimes_{U(L')} V.
\end{equation*}
$\bar{V}$ has a natural structure of a $Z_2$-graded vector space(resp. Klein graded vector space). The subspace $\bar{V}_x, x \in Z_2$ (resp. x in Klein group) being spanned by the tensors of the form $X\otimes y$ with X $\in  U(L)_a$ y $\in V_b$ ; $a,b \in Z_2$ , a+b = x (resp. a,b $\in \mathcal{K}$, ab=x). Now it is obvious that $\bar{V}$ has a structure of a left graded U(L)-module with satisfies
\begin{equation*}
\begin{split}
X(Y\otimes y) = (XY)\otimes y \\
\text{for all X,Y in U(L), y in V}.
\end{split}
\end{equation*}
This graded U(L)-module $\bar{V}$ is called the graded U(L)-module induced from the graded $U(L')$-module V.

One can define the even(resp. of degree e) linear mapping
\begin{equation*}
a : V \to \bar{V}
\end{equation*}
by
\begin{equation*}
a(y) = 1 \otimes y \text{ , if  } y \in V.
\end{equation*}
The linear mapping a is $U(L')$-invariant:
\begin{equation*}
a(X'y) = X'a(y)     \hspace{10pt} \text{ for all } X' \in U(L') , y \in V.
\end{equation*}

\begin{lemma}
For every sequence $N \in H$ the linear mapping 
\begin{equation*}
V \to \bar{V} \hspace{10pt} , y \to E_N a(y) = E_N \otimes y
\end{equation*}
is injective; in particular, a itself is injective. The vector space $\bar{V}$ is the direct sum of its subspaces $E_N a(V)$ , $N \in H$.
\end{lemma}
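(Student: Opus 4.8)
The plan is to deduce the entire statement from the preceding proposition, which asserts that $(E_N)_{N\in H}$ is a basis of $U(L)$ regarded as a \emph{right} $U(L')$-module. In module-theoretic language this says that $U(L)$ is free as a right $U(L')$-module and splits as $U(L)=\bigoplus_{N\in H}E_N\,U(L')$, where for each $N$ the map $X'\mapsto E_N X'$ is an isomorphism of right $U(L')$-modules from $U(L')$ onto the summand $E_N\,U(L')$: injectivity is exactly the linear independence of the $E_N$ over $U(L')$, and surjectivity is the definition of the summand. This freeness is the only nontrivial input, and it is supplied by the proposition, so the rest is bookkeeping for the balanced tensor product $\bar V=U(L)\otimes_{U(L')}V$.

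First I would apply the functor $-\otimes_{U(L')}V$ to this decomposition. Since tensoring over $U(L')$ commutes with direct sums, $\bar V=\bigoplus_{N\in H}\bigl(E_N\,U(L')\otimes_{U(L')}V\bigr)$, each summand sitting inside $\bar V$ as the image of the corresponding term. Next, tensoring the rank-one isomorphism $U(L')\cong E_N\,U(L')$ with $V$, and then using the canonical identification $U(L')\otimes_{U(L')}V\cong V$ (given by $X'\otimes y\mapsto X'y$, with inverse $y\mapsto 1\otimes y$), produces an isomorphism $V\to E_N\,U(L')\otimes_{U(L')}V$. Tracing $1\otimes y\mapsto E_N\otimes y$ shows this composite is precisely $y\mapsto E_N\otimes y=E_N a(y)$. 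Hence this map is injective with image the $N$-th summand $E_N a(V)$; taking $N=\varnothing$, where $E_\varnothing=1$, recovers the injectivity of $a$ itself. Because these summands are exactly the pieces of the direct-sum decomposition, we conclude $\bar V=\bigoplus_{N\in H}E_N a(V)$, as required.

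To exhibit the injectivity explicitly rather than through the categorical fact about direct sums, I would instead use coordinate projections. For each $N$ let $p_N\colon U(L)\to U(L')$ be the right $U(L')$-module homomorphism sending $X=\sum_M E_M X'_M$ to its coefficient $X'_N$; this is well-defined \emph{precisely} by the freeness above. Then $p_N\otimes\mathrm{id}_V\colon\bar V\to U(L')\otimes_{U(L')}V\cong V$ satisfies $(p_N\otimes\mathrm{id}_V)(E_M\otimes y)=\delta_{MN}\,y$, so it is a left inverse of $y\mapsto E_N\otimes y$ and moreover forces $y_M=0$ for all $M$ whenever $\sum_M E_M\otimes y_M=0$. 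Surjectivity of $\bar V$ onto $\sum_N E_N a(V)$ is immediate from the balancing relation $XX'\otimes y=X\otimes X'y$ for $X'\in U(L')$, which lets one push every coefficient across the tensor sign and rewrite an arbitrary $X\otimes y$ as a finite sum $\sum_N E_N\otimes X'_N y$.

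The main obstacle, if one does not simply cite the proposition, is the construction of the well-defined projections $p_N$ (equivalently, the verification that $-\otimes_{U(L')}V$ respects this particular decomposition), since that is the single place where linear independence of the $E_N$ over the noncommutative ring $U(L')$ is genuinely used; the passage from each summand to $V$ via $U(L')\otimes_{U(L')}V\cong V$ is routine. I would also note in passing that every map used here is homogeneous for the $\mathbb{Z}_2$- (respectively Klein-) gradation, so the decomposition $\bar V=\bigoplus_{N\in H}E_N a(V)$ is compatible with the grading, although this compatibility is not needed for the injectivity and direct-sum claims.
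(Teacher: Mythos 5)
Your proof is correct, and it is essentially the argument the paper intends: the paper states this lemma without any proof, having placed the proposition that $(E_N)_{N\in H}$ is a basis of $U(L)$ as a right $U(L')$-module immediately beforehand precisely so that the lemma follows by the freeness argument you give (the proof in Scheunert's lecture notes, the paper's source, runs the same way). Both your categorical route --- tensoring the decomposition $U(L)=\bigoplus_{N\in H}E_N\,U(L')$ with $V$ over $U(L')$ and tracing $y\mapsto E_N\otimes y$ through the identification $U(L')\otimes_{U(L')}V\cong V$ --- and your explicit coordinate-projection verification are sound, and you correctly isolate the single nontrivial input: the projections $p_N$ are well-defined right $U(L')$-module maps exactly because of the linear independence of the $E_N$ over $U(L')$ asserted in the preceding proposition.
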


This lemma may now be applied to give us a proof of the Ado theorem for both Lie superalgebras and Klein graded Lie algebras. In fact, let $L' = L_0$ (resp. $L' = L_e$) and let V be any $L_0$-module (resp.  $L_e$-module). We introduce a trivial $Z_2$-gradation (resp. Klein-gradation) in V by defining
\begin{equation*}
V_0 = V \hspace{10pt} , \hspace{10pt} V_1 = {0}
\end{equation*}
respectively
\begin{equation*}
V_e = V \hspace{10pt} , \hspace{10pt} V_r = {0}, \hspace{10pt} V_s = {0}, \hspace{10pt} V_t = {0}.
\end{equation*}

Then the induced L-module $\bar{V}$ is well-defined and lemma impLies:

a) If the $L_0$-module(resp.  $L_e$-module) V is faithful, then the L-module $\bar{V}$ is faithfull too, provided that $V \neq \{0\}$.

b) If the vector spaces $ L_1$ (resp. some of $L_r, L_s,L_t$) and V are finite-dimensional, then the same holds true for $\bar{V}$.

Now, using these remarks and the Ado theorem for Lie algebras we conclude that:

\begin{theorem}
Every finite-dimenstional Lie superalgebra(resp. Klein-graded Lie algebra) has a faithful finite-dimensional graded representation.
\end{theorem}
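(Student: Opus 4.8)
The plan is to reduce everything to the classical Ado theorem for ordinary Lie algebras by means of the induced-module construction set up above. In both cases I would single out a graded subalgebra $L'\subseteq L$ on which the commutation factor is trivial, so that $L'$ is an \emph{ordinary} Lie algebra and the classical Ado theorem supplies a faithful finite-dimensional $L'$-module $V$; after equipping $V$ with the trivial gradation I would pass to $\bar V = U(L)\otimes_{U(L')}V$ and verify the two required properties by the preceding lemma. Faithfulness of $\bar V$ as an $L$-module is immediate from the direct-sum decomposition $\bar V=\bigoplus_{N\in H}E_N\,a(V)$ of that lemma, since $a\colon V\to\bar V$ is injective and $L'$-equivariant and a faithful $L'$-action cannot vanish on any $E_N\otimes V$ when $V\neq\{0\}$; this is the content of remark (a). Finite-dimensionality is the content of remark (b), and it is the step I expect to be the genuine obstacle.

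For a Lie superalgebra the choice $L'=L_0$ works directly: $L_0$ is an ordinary Lie algebra, and the complementary generators in the PBW-type basis $(E_N)_{N\in H}$ of $U(L)$ over $U(L')$ all come from $L_1$, hence are odd. An odd generator may occur at most once in an admissible sequence $N\in H$, so $H$ is finite whenever $L_1$ is finite-dimensional; therefore $\bar V$ is a finite direct sum of copies of $V$ and is finite-dimensional. In the Klein case the delicate point is exactly this: a homogeneous generator of degree $a$ behaves like a Grassmann (odd) variable, contributing only finitely to $H$, precisely when $\theta(a,a)=-1$; when $\theta(a,a)=1$ it may be repeated arbitrarily often, $H$ becomes infinite, and $\bar V$ is infinite-dimensional.

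Consequently the naive choice $L'=L_e$ fails for a Klein graded Lie algebra, since the colour tables in Section~\ref{kleingraded} show that at least one of $L_r,L_s,L_t$ always has trivial self-colour. The remedy is to enlarge $L'$ so as to absorb the even components: using the identity $\theta(ab,ab)=\theta(a,a)\theta(b,b)$ established in Section~\ref{kleingraded}, the assignment $s(a)$ determined by $\theta(a,a)=(-1)^{s(a)}$ is a homomorphism $\mathcal{K}\to\mathbb{Z}_2$, so $\mathcal{K}_0=\ker s$ is a subgroup and $L'=\bigoplus_{a\in\mathcal{K}_0}L_a$ is a graded subalgebra whose complement $\bigoplus_{a\notin\mathcal{K}_0}L_a$ consists entirely of odd generators; with this $L'$ the set $H$ is finite and (b) gives $\dim\bar V<\infty$ while (a) gives faithfulness. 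This closes the argument whenever $\theta$ restricts to the trivial factor on $\mathcal{K}_0$, which is the case for $\theta_3$ and $\theta_4$: there $\mathcal{K}_0=\{e,t\}$ and $L'=L_e\oplus L_t$ is the ordinary Lie algebra already identified in Section~\ref{substructure}. The one residual difficulty is the symmetric colour $\theta_2$, for which $\mathcal{K}_0=\mathcal{K}$ and $L'=L$ still carries a nontrivial commutation factor, so classical Ado does not apply verbatim; here I would first discolour $L$, replacing the bracket through a bilinear $\beta\colon\mathcal{K}\times\mathcal{K}\to K^*$ with $\beta(a,b)\beta(b,a)^{-1}=\theta(a,b)$ to obtain an ordinary Lie algebra with a faithful finite-dimensional module, and then colour back. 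The main obstacle throughout is thus this finite-dimensionality and choice-of-$L'$ bookkeeping; faithfulness and the $U(L)$-module structure are formal consequences of the preceding lemma.
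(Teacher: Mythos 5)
Your superalgebra half is exactly the paper's argument: $L'=L_0$, a classical Ado module equipped with the trivial gradation, and remarks (a), (b) applied to the induced module $\bar V=U(L)\otimes_{U(L')}V$. In the Klein case you knowingly diverge from the paper, and your diagnosis is correct: in the PBW-type basis $(E_N)_{N\in H}$ of $U(L)$ over $U(L')$ only generators of self-colour $\theta(a,a)=-1$ are barred from repeating, and since $\theta(r,r)\theta(s,s)\theta(t,t)=1$ (Section \ref{kleingraded}) the number of degrees with self-colour $-1$ is even, so at least one nontrivial degree always has self-colour $+1$; if that component of $L$ is nonzero, $H$ is infinite and the module induced from $L'=L_e$ is infinite-dimensional. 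The paper's finiteness remark (b), taken literally with $L'=L_e$, is therefore only valid in the super case, and your repair — $L'=\bigoplus_{a\in\mathcal{K}_0}L_a$ with $\mathcal{K}_0=\ker s$, where $\theta(a,a)=(-1)^{s(a)}$ and $s$ is a homomorphism by $\theta(ab,ab)=\theta(a,a)\theta(b,b)$, so that every complementary generator is odd and $H$ is finite, together with a discoloration for $\theta_1,\theta_2$ — is the right correction of the finite-dimensionality step.

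There is, however, a genuine gap in your version: gradedness. The theorem asserts a \emph{graded} representation, and $\bar V$ carries a well-defined Klein gradation only when $V$ is a graded $U(L')$-module, i.e. $U(L')_aV_b\subset V_{ab}$. The trivial gradation $V=V_e$ achieves this precisely when $L'$ is concentrated in degree $e$, which is why the paper's choice $L'=L_e$ never meets the problem; for your enlarged $L'=L_e\oplus L_t$ (colours $\theta_3,\theta_4$), declaring $V=V_e$ forces $L_t\cdot V\subset V_t=\{0\}$, contradicting faithfulness, and if one ignores the grading of $V$ then the grading of $\bar V$ by total degree of $X\otimes y$ is ill-defined: for $x\in L_t$, $y\in V$ with $xy\neq 0$, the element $x\otimes y=1\otimes xy$ would be homogeneous of degree $t$ and of degree $e$ simultaneously. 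The same defect reappears at the end of your $\theta_2$ (and $\theta_1$) branch: classical Ado for the discoloured algebra yields an \emph{ungraded} module, whereas colouring back via the cocycle twist operates on $\mathcal{K}$-graded modules. The missing step is an easy regrading: from a faithful finite-dimensional module $W$ of the ordinary Lie algebra $L'$, set $V=W\otimes K[\mathcal{K}_0]$ with $V_g=W\otimes Kg$ and $x_a(w\otimes g)=x_aw\otimes ag$; this is a graded, faithful, finite-dimensional $L'$-module, and inducing from it (respectively, applying the same trick with $K[\mathcal{K}]$ before untwisting in the $\theta_1,\theta_2$ cases) yields the desired graded, faithful, finite-dimensional representation. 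With that insertion your argument closes, and it is in fact more careful on the Klein side than the paper's own proof, which asserts remark (b) for $L'=L_e$ without addressing the repeating generators of self-colour $+1$.
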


\section{A special case}\label{aspecialcase}
In this section we will examine a graded Lie algebra which has even part equal to a known Lie algebra, more specific we will examine the case of sl(2,C). On the first part we will find some equations which come easily by composing the relations of a base of sl(2,C) and the known relations for an irreducible representation. We will use some tools from the theory of Lie algebras to find results for a grades Lie algebra. On the second part, we will examine the meaning of these equations and we will write some remarks for this algebra.
\subsection{Equations}
Let $L=L_e \bigoplus L_r \bigoplus L_s \bigoplus \L_t$ be a Klein graded Lie Algebra. We set $L_e$ to be the simple Lie algebra sl(2,C). $ L_r$ is a $L_e$-module and as it follows from Weyl theorem,  $L_r$ is direct sum of irreducible  representations of sl(2,C). Lets say $L_r = \bigoplus_{k} M_k, M_k$ is irreducible representation. We select a $M_k$ and we  choose a base for this vector space, lets say  \{$e_0,e_1 ... e_n$\}. The base \{h,x,y\} of sl(2,C) comes with the the following rules:
\begin{equation*}
\begin{split}
[h,x]=2x\\
[h,y]=-2y\\
[x,y]=h
\end{split}
\end{equation*}
The bracket product for two elements of $M_k$ is in $L_e$, so we have:
\begin{equation*}
\begin{split}
[e_i,e_j] = a_{i,j}h + b_{i,j}x + c_{i,j}y \\
\text{for some  }a_{i,j}, b_{i,j}, c_{i,j} \in \mathbb{C}.
\end{split}
\end{equation*}

Now using the Jacobi identity for Klein-graded Lie algebras we have that:
\begin{equation}\label{basicequ1}
\theta(e,r)[h,[e_i,e_j]] + \theta(r,r)[e_j,[h,e_i]] + \theta(r,e)[e_i,[e_j,h]] = 0
\end{equation}
\begin{equation}\label{basicequ2}
\theta(e,r)[x,[e_i,e_j]] + \theta(r,r)[e_j,[x,e_i]] + \theta(r,e)[e_i,[e_j,x]] = 0
\end{equation}
\begin{equation}\label{basicequ3}
\theta(e,r)[y,[e_i,e_j]] + \theta(r,r)[e_j,[y,e_i]] + \theta(r,e)[e_i,[e_j,y]] = 0
\end{equation}

We will work these equations to take some results. Due to equation \eqref{basicequ1} and the rules for an irreducible representation of sl(2,C) we have:

\begin{align*}
0 &= \theta(e,r)[h,[e_i,e_j]] + \theta(r,r)[e_j,[h,e_i]] + \theta(r,e)[e_i,[e_j,h]]
\\ 0 &= \theta(e,r)[h,[e_i,e_j]] - \theta(r,r)\theta(r,r)[[h,e_i],e_j] -\theta(r,e)\theta(r,e)[e_i,[h,e_j]]
\\ 0 &= [h, a_{i,j}h + b_{i,j}x + c_{i,j}y] - (n-2i)[e_i,e_j] -(n-2j)[e_i,e_j] 
\\ 0 &= 2b_{i,j}x - 2c_{i,j}y - (n-2i)(a_{i,j}h + b_{i,j}x + c_{i,j}y) -(n-2j)(a_{i,j}h + b_{i,j}x + c_{i,j}y)
\\ 0 &= 2ha_{i,j}(-n+i+j) +  2xb_{i,j}(1 -n+i +j) +2yc_{i,j}(-1-n+i+j)
\end{align*}

and we obtain the equations 1-3 from the next part.
Now we are going to use the equation \eqref{basicequ2} and we will get equations 4-6:
\begin{align*}
0 &= \theta(e,r)[x,[e_i,e_j]] + \theta(r,r)[e_j,[x,e_i]] + \theta(r,e)[e_i,[e_j,x]]
\\ 0 &= \theta(e,r)[x,[e_i,e_j]] - \theta(r,r)\theta(r,r)[[x,e_i],e_j] -\theta(r,e)\theta(r,e)[e_i,[x,e_j]]
\\ 0 &= [x, a_{i,j}h + b_{i,j}x + c_{i,j}y] - (n-i+1)[e_{i-1},e_j] -(n-j+1)[e_i,e_{j-1}]
\\ 0 &= -2a_{i,j}x + c_{i,j}h - (n-i+1)(a_{i-1,j}h + b_{i-1,j}x  + c_{i-1,j}y)
\\  &-(n-j+1)(a_{i,j-1}h + b_{i,j-1}x + c_{i,j-1}y)
\\ 0 &=h(-a_{i-1,j}(n-i+1) - a_{i,j-1}(n-j+1) + c_{i,j}
\\ &- x(b_{i-1,j}(n-i +1) + b_{i,j-1}(n-j +1) +2a_{i,j})
\\ &- y(c_{i-1,j}(n-i +1) + c_{i,j-1}(n-j +1))
\end{align*}

Finally, we use equation \eqref{basicequ3} and we get equations 7-9:
\begin{align*}
0 &= \theta(e,r)[y,[e_i,e_j]] + \theta(r,r)[e_j,[y,e_i]] + \theta(r,e)[e_i,[e_j,y]]
\\ 0 &= \theta(e,r)[y,[e_i,e_j]] - \theta(r,r)\theta(r,r)[[y,e_i],e_j] -\theta(r,e)\theta(r,e)[e_i,[y,e_j]]
\\ 0 &= [y, a_{i,j}h + b_{i,j}x + c_{i,j}y] - (i+1)[e_{i+1},e_j] -(j+1)[e_i,e_{j+1}]
\\ 0 &= 2a_{i,j}y -b_{i,j}h - (i+1)(a_{i+1,j}h + b_{i+1,j}x + c_{i+1,j}y) 
\\ &-(j+1)(a_{i,j+1}h + b_{i,j+1}x + c_{i,j+1}y)
\\ 0 &=h(-a_{i+1,j}(i+1) - a_{i,j+1}(j+1) - b_{i,j}) 
\\ &- x(b_{i-1,j}(i +1) + b_{i,j-1}(j +1)) 
\\ &+y(-c_{i+1,j}(i +1) - c_{i,j+1}(j +1) +2a_{i,j})
\end{align*}

So after these calculations we have:

\begin{enumerate}
\item $a_{i,j}(n-i-j) = 0$
\item $b_{i,j}(n-i-j-1) = 0 $
\item $c_{i,j}(n-i-j+1) = 0$
\item $a_{i-1,j}(n-i+1) + a_{i,j-1}(n-j+1) - c_{i,j}=0 $
\item $b_{i-1,j}(n-i+1) + b_{i,j-1}(n-j+1) + 2a_{i,j}=0 $
\item $c_{i-1,j}(n-i+1) + c_{i,j-1}(n-j+1) =0 $
\item $a_{i+1,j}(i+1) + a_{i,j+1}(j+1) + b_{i,j} = 0 $
\item $b_{i+1,j}(i+1) + b_{i,j+1}(j+1)                = 0 $
\item $c_{i+1,j}(i+1) + c_{i,j+1}(j+1) - 2a_{i,j} = 0 $
\end{enumerate}

Now we will take a closer look to these relations and make some observations on the structure of $M_k$. From the first 3 relations we will gain some information on the form of the matrices a,b,c and the rest relations will lead us to a relation between matrices a,b and c. In more details, the first relation $a_{i,j}(n-i-j) = 0$ shows that the elements $a_{i,j}$ can be non-zero only in the case that $i+j \neq n$ and they are zero otherwise. The matrix a is anti-diagonal, it has the following form:
\[ a_{i,j}=\left( \begin{array}{ccccc}
0 & \cdots & \cdots & 0 & a_{n,0}\\
\vdots &  & 0& \dots& 0\\
\vdots & 0 &\dots & 0&\vdots\\
0 &\dots & 0& & \vdots\\
a_{0,n} & 0 & \cdots & \cdots& 0
\end{array} \right)\] 

\vspace{10pt}
The second relation $b_{i,j}(n-i-j-1) = 0$ shows that the elements $b_{i,j}$ can be non-zero only in the case that $i+j \neq n-1$ and they are zero otherwise. So the matrix b has the following form:

\[ b_{i,j}=\left( \begin{array}{ccccc}
0 & \cdots& 0 & b_{n-1,0} & 0\\
\vdots &  0& \dots& 0& 0\\
0 & \dots &\dots &  &\vdots\\
b_{0,n-1} &0 & & 0 & \vdots\\
0 & 0 & \cdots & \cdots& 0
\end{array} \right)\] 

\vspace{10pt}
Finally, the third relation $c_{i,j}(n-i-j+1) = 0$ shows that the elements $c_{i,j}$ can be non-zero only in the case that $i+j \neq n+1$ and they are zero otherwise. So the matrix c has the following form:

\[ c_{i,j}=\left( \begin{array}{ccccc}
0 & 0& \cdots & 0 & 0\\
\vdots & & 0& 0&  c_{n,1}\\
\vdots & \dots &\dots & \dots&0\\
0 &0 & \dots& & \vdots\\
0 & c_{1,n} & 0 & \cdots& 0
\end{array} \right)\] 

\vspace{10pt}

Now we know the form of the matrices. We will use the rest relations (4-9) to express $c_{i,j},b_{i,j},a_{i,j}$ as products of $a_{0,n}$. Before we do that, we have to express $b_{i,j}, c_{i,j}$ in terms of a.
Relation 8 by setting i = i-1 and j = j-1 we get:
\begin{equation*}
b_{i-1,j} = \frac{-i}{j}b_{i,j-1}
\end{equation*}
using this on relation 5 and we have:
\begin{align*}
(n-1+1) \frac{-i}{j}b_{i,j-1} + (n-j+1)b_{i,j-1} = -2a_{i,j}
\\ b_{i,j-1}\frac{j(n-j+1) - i(n-i+1)}{j} = -2a_{i,j}
\\b_{i,j-1} = \frac{-2j}{j(n-j+1) - i(n-i+1)}a_{i,j}
\end{align*}
we set now j = j+1:
\begin{equation}\label{bformula}
b_{i,j} = \frac{2(j+1)}{i(n-i+1) -(j+1)(n-j)}a_{i,j+1}
\end{equation}

Relation 6:
\begin{equation*}
c_{i,j+1} = \frac{-(n-j)}{(n-i)}c_{i+1,j}
\end{equation*}
using this on relation 9 and we have:
\begin{align*}
(i+1)c_{i+1,j} - \frac{(j+1)(n-j)}{n-i}c_{i+1,j} = 2a_{i,j}
\\ c_{i+1,j} = \frac{2(n-i)}{(i+1)(n-i) - (j+1)(n-j)}a_{i,j}
\end{align*}
we set i=i-1:

\begin{equation}\label{cformula}
c_{i,j} = \frac{2(n-i+1)}{(i-j-1)(n-i-j)}a_{i-1,j}
\end{equation}

Now we can use the last 2 relations to find a formula for $a_{i,j}$. We use relation 4(or relation 7, it is the same result):

\begin{align*}
0 &=(n-i+1)a_{i-1,j} + (n-j+1)a_{i,j-1} - \frac{2(n-i+1)}{(i-j-1)(n-i-j)}a_{i-1,j} 
\\ a_{i,j-1} &= a_{i-1,j}\frac{(n-i+1)(i-j-1)(n-i-j) -2(n-i+1)}{(j-i+1)(n-i-j)(n-j+1)}
\end{align*}
We set j = n-i+1:

\begin{equation}
a_{i,n-i} = a_{i-1,n-i+1}\frac{(n-i+1)(2i-n)}{i(n-2i+2)}
\end{equation}
The last one for i=1:
\begin{equation*}
a_{1,n-1} = -a_{0,n}(n-2)
\end{equation*}
The last one for i=2:
\begin{align*}
a_{2,n-2} &= -a_{1,n-1}\frac{(n-1)(n-4)}{2(n-2)} 
\\ &= (-1)^2 a_{0,n}\frac{(n-1)(n-4)}{2!}
\end{align*}
The last one for i=3:
\begin{align*}
a_{3,n-3} &= -a_{2,n-2}\frac{(n-2)(n-6)}{3(n-4)} 
\\ &= (-1)^3 a_{0,n}\frac{(n-1)(n-2)(n-6)}{3!}
\end{align*}
The last one for i=4:
\begin{align*}
a_{4,n-4} &= -a_{3,n-3}\frac{(n-3)(n-8)}{4(n-6)} 
\\&= (-1)^4 a_{0,n}\frac{(n-1)(n-2)(n-3)(n-8)}{4!}
\end{align*}
if we continue these calculations, we can notice that the formula is:
\begin{equation}\label{aequation}
a_{i,n-i} =(-1)^i \binom{n}{i} \frac{n-2i}{n} a_{0,n}
\end{equation}
Using the last one and by setting  j = n-i-1 in \eqref{bformula}, we get:
\begin{equation*}
b_{i,n-i-1} = \frac{2(n-i)}{2i-n}a_{i,n-i}
\end{equation*}
or
\begin{equation}
b_{i,n-i-1} =(-1)^{i+1} \binom{n}{i} \frac{2(n-i)}{n} a_{0,n}
\end{equation}
Now using \eqref{aequation} and by setting  j = n-i+1 in \eqref{cformula}, we get:
\begin{equation*}
c_{i,n-i-1} = \frac{2(n-i+1)}{-(2i-n-2)}a_{i-1,n-i+1}
\end{equation*}
or
\begin{equation}
c_{i,n-i} =(-1)^{i-1} \binom{n}{i-1} \frac{2(n-i+1)}{n} a_{0,n}
\end{equation}

\subsection{The Meaning}
The calculations are done. Now we may examine what results we can extract out of these equations. Consider these two products
\begin{equation*}
[e_i,e_j] = a_{i,j}h + b_{i,j}x + c_{i,j}y
\end{equation*}
and 
\begin{equation*}
[e_j,e_i] = a_{j,i}h + b_{j,i}x + c_{j,i}y.
\end{equation*}
Using the bracket properties, we have that $a_{i,j} = -\theta(r,r)a_{j,i}$. 
So, if  $\theta(r,r)=1$ then $L_e \bigoplus L_r$ is a Lie algebra and we have that  $a_{i,j} = -a_{j,i}$. Equation \eqref{aequation} says that $a_{n,0} = (-1)^{n+1}a_{0,n}$ and now we see that if $L_e \bigoplus L_r$ is a Lie algebra, then n has to be even. Consequently $dimM_k$ has to be odd. On the other hand, if $\theta(r,r)=-1$ then $L_e \bigoplus L_r$ is a Lie superalgebra. In that case n has to be odd so $dimM_k$ is even.

\begin{remark}
If $\theta(r,r)=1$ then $dimM_k$ is odd, otherwise $dimM_k$ is even.
\end{remark}

\begin{remark}
We can see that if one of $b_{i,j}$, $c_{i,j}$ or $a_{i,j}$  is zero, then all of them are zero, so the matrices a,b,c are zero. Hence $[M_k,M_k] =\{0\}$.
\end{remark}

\begin{remark}

If the dimension of $M_k$ is odd, then n is even. In that case, in equation \eqref{aequation} there is an integet i such that n-2i=0, so $a_{i,n-i}  = 0$ and due to previous remark we have that $[M_k,M_k] =\{0\}$.
\end{remark}

\begin{remark}
Let  $L_e \bigoplus L_r$ be a Lie algebra, we know that $a_{i,n-i} =(-1)^i \binom{n}{i} \frac{n-2i}{n} a_{0,n}$ and recall that on Lie algebra case n is even. Due to remark, $[M_k,M_k] =\{0\}$.
\end{remark}

\begin{proposition}
Consider 2 different irreducible representation of $L_e$, $M_k, M_l \in L_r$ with $ k \neq l $, then $[M_k,M_l] = 0$.
\end{proposition}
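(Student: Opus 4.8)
The plan is to work entirely inside the subalgebra $L_e \oplus M_k \oplus M_l$, which is closed because $[M_k,M_k]$, $[M_k,M_l]$, $[M_l,M_l]$ all land in $L_e$ (since $r\cdot r=e$) and $L_e$ preserves each summand, and to show that the cross bracket, read as a map of $L_e$-modules, must vanish. First I would record that $[M_k,M_l]\subseteq L_e$ and that the restricted bracket $P:=[\,\cdot\,,\cdot\,]\colon M_k\otimes M_l\to L_e$ is $L_e$-equivariant: applying the Leibniz rule with $Q\in L_e$ (and $\theta(e,r)=1$) gives $[Q,[X,Y]]=[[Q,X],Y]+[X,[Q,Y]]$ for $X\in M_k$, $Y\in M_l$, which is exactly the statement that $P\in\mathrm{Hom}_{L_e}(M_k\otimes M_l,L_e)$, where $L_e=sl(2,\mathbb{C})$ acts on itself adjointly, i.e. as the $3$-dimensional irreducible of highest weight $2$.

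Next I would use the representation theory of $sl(2,\mathbb{C})$ to reduce everything to a single scalar. With the standard weight bases $\{e_0,\dots,e_n\}$ of $M_k$ and $\{f_0,\dots,f_m\}$ of $M_l$, the element $[e_i,f_j]$ has $h$-weight $n+m-2(i+j)$, which must lie in $\{-2,0,2\}$ for the bracket to be nonzero; in particular, \emph{if $n+m$ is odd then every $[e_i,f_j]$ vanishes and we are done}. When $n+m$ is even, the Clebsch--Gordan rule $M_k\otimes M_l=V_{n+m}\oplus V_{n+m-2}\oplus\dots\oplus V_{|n-m|}$ (writing $V_j$ for the $(j+1)$-dimensional irreducible) is multiplicity-free, so by Schur's lemma $\mathrm{Hom}_{L_e}(M_k\otimes M_l,L_e)$ is zero unless $V_2=L_e$ occurs, i.e. unless $|n-m|\le 2\le n+m$, and in those finitely many resonant cases it is at most one-dimensional. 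Hence $P$ is pinned down by a single scalar $\mu$, and it remains to prove $\mu=0$.

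To kill $\mu$ I would invoke the graded Jacobi identity together with the directness of the sum $L_r=\bigoplus_p M_p$. Taking $A\in M_l$ and $B,C\in M_k$, it reads
\[
[A,[B,C]] + [B,[C,A]] + [C,[A,B]] = 0 .
\]
Here $[B,C]\in[M_k,M_k]\subseteq L_e$ acts on $A$, so $[A,[B,C]]\in M_l$, whereas $[C,A],[A,B]\in L_e$ act on $B,C$, so $[B,[C,A]]+[C,[A,B]]\in M_k$. Since $M_k\cap M_l=\{0\}$, the two components vanish separately, and the $M_k$-component gives $[B,[C,A]]+[C,[A,B]]=0$. Substituting the explicit equivariant form of $P$ (got by lowering the highest-weight vector of the copy of $V_2$ sitting in $M_k\otimes M_l$) and choosing $A,B,C$ to be suitable weight vectors, this relation becomes a fixed nonzero multiple of $\mu$ times a basis vector of $M_k$, whence $\mu=0$ and $[M_k,M_l]=0$.

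The main obstacle is precisely this last step in the resonant range $|n-m|\le 2$: one must verify that the $M_k$-component of the Jacobi identity really is a \emph{nonzero} multiple of $\mu$, and ideally do so uniformly rather than separately for $|n-m|=0$ and $|n-m|=2$. The conceptual reason it cannot survive is that the Jacobi identity forces $P$ to be the $L_e$-part of a moment map for an $L_e$-invariant pairing on $M_k\oplus M_l$, and the demand that this moment map take values in the mere $3$-dimensional algebra $sl(2,\mathbb{C})$ — rather than in the larger symplectic (or orthogonal) algebra that a genuine cross-pairing would generate — is what breaks down. Making that argument clean and uniform is the delicate point; the safe fallback is to repeat the single-module weight computation, writing the rectangular arrays $a_{i,j},b_{i,j},c_{i,j}$ for $[e_i,f_j]=a_{i,j}h+b_{i,j}x+c_{i,j}y$, deriving the analogues of equations (1)--(9), and reading off the vanishing directly.
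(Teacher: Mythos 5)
Your reduction is correct as far as it goes --- the equivariance of $P\colon M_k\otimes M_l\to L_e$, the parity observation, and the multiplicity-free Clebsch--Gordan argument pinning $P$ down to a single scalar $\mu$ in the resonant range $|n-m|\le 2\le n+m$ are all sound --- but the proof stops exactly where the proposition begins. The assertion that the $M_k$-component of the Jacobi identity ``becomes a fixed nonzero multiple of $\mu$'' is never verified, and you flag it yourself as the delicate point; that verification is not a technicality, it is the entire content of the statement, since everything before it only says $P$ is \emph{either} zero \emph{or} one specific equivariant map. Worse, the particular relation you chose can be degenerate: take $M_k$ the trivial one-dimensional module ($n=0$) and $m=2$, which is resonant. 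Then $L_e$ acts as zero on $B,C\in M_k$, so $[B,[C,A]]=-\rho_{M_k}\bigl([C,A]\bigr)B=0$ and likewise $[C,[A,B]]=0$ identically, for \emph{every} $\mu$; no choice of weight vectors extracts a multiple of $\mu$ from this component. The companion component (two vectors $X,Y\in M_l$, one from $M_k$) yields $\mu\,(1+\theta(r,r))\,\phi^{-1}[\phi X,\phi Y]$ up to sign, which also cancels identically when $\theta(r,r)=-1$. So the plan as written cannot be completed uniformly over the resonant cases, and at minimum the trivial-module case must be treated separately (it is in fact genuinely exceptional, and any repair will need the hypothesis $[L_e,M_k]=M_k$, i.e.\ nontriviality of the summands).

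The paper's own proof is shorter and avoids all of this machinery: from the equivariance you already established, the span of $[M_k,M_l]$ is an ideal of the simple Lie algebra $L_e=sl(2,\mathbb{C})$, hence equals $\{0\}$ or $L_e$. Assuming it is $L_e$, the paper uses the Jacobi-identity swap $[[M_k,M_l],M_k]=[[M_k,M_k],M_l]$ together with the fact that $[M_k,M_k]$ is likewise an ideal, so also $\{0\}$ or $L_e$: if $[M_k,M_k]=\{0\}$ the left side is $[L_e,M_k]=M_k$ while the right side is $\{0\}$; if $[M_k,M_k]=L_e$ the left side is $M_k$ while the right side is $[L_e,M_l]=M_l$; either way a contradiction, since distinct summands intersect trivially. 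No Clebsch--Gordan decomposition, no scalar to kill, no case split on $n,m$. (Note that this argument, too, silently uses $[L_e,M_i]=M_i$, which is where your trivial-module example slips through both proofs.) My recommendation: keep your first paragraph verbatim, discard the Clebsch--Gordan reduction, and conclude via the ideal-plus-simplicity argument --- your setup already contains everything it needs.
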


\begin{proof}

First we show that  $[M_k,M_l]$ is an ideal of $L_e$. Let $a \in L_e, b \in M_k, c \in M_l$ then we have:
\begin{equation*}
[a,[b,c]] = [[a,b],c] + [b,[a,c]]
\end{equation*}
we see that 
\begin{equation*}
[[a,b],c] \in [M_k,M_l] \text{ and } [b,[a,c]]  \in [M_k,M_l].
\end{equation*}
Hence,
\begin{equation*}
[L_e, [M_k, M_l]] \subset [M_k,M_l]
\end{equation*}
and $[M_k,M_l]$ is an ideal of $L_e$.
Now, assume that $[M_k,M_l]  =  L_e$ ; we have 2 cases($[M_k,M_k]$ is also an ideal),

Case 1:  If one assume that $[M_k,M_k] = \{0\}$ then
\begin{equation*}
[[M_k,M_l], M_k] = M_k
\end{equation*}
on the other hand, using jacobi identity, 
\begin{equation*}
[[M_k,M_l], M_k] = [[M_k,M_k], M_l] = \{0\} 
\end{equation*}
which is a contradiction. 

Case 2:  If one assume that $[M_k,M_k] = L_e$ then
\begin{equation*}
[[M_k,M_l], M_k] = M_k
\end{equation*}
on the other hand, using jacobi identity, 
\begin{equation*}
[[M_k,M_l], M_k] = [[M_k,M_k], M_l] = M_l
\end{equation*}
which is a contradiction. 

On both cases it is a condradiction, so our first assumption that $[M_k,M_l]  =  L_e$ was wrong and we have that $[M_k,M_l]  =  \{0\}$. 
\end{proof}

\begin{proposition}
Consider an  irreducible representation of $L_e$, $M_k \in L_r$ and assume that $[M_k,M_k] = L_e$ then $L_r = M_k$.
\end{proposition}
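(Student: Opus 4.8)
The plan is to argue by contradiction: suppose $L_r \neq M_k$ and produce an obstruction from a second irreducible summand. By Weyl's theorem $L_r$ decomposes as $L_r = \bigoplus_l M_l$ into irreducible $L_e$-modules, with $M_k$ among them; if $L_r \neq M_k$ there is a summand $M_l$ with $l \neq k$ and $M_l \neq 0$. The previous proposition applies to give $[M_k, M_l] = 0$, and this vanishing of the cross-bracket is the only structural input I will need.

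First I would feed $[M_k, M_l] = 0$ into the graded Jacobi identity to pin down how $L_e$ acts on $M_l$. Taking homogeneous elements $A, B \in M_k$ and $C \in M_l$, all of degree $r$, the identity reads $\theta(r,r)\bigl([A,[B,C]] + [B,[C,A]] + [C,[A,B]]\bigr) = 0$; since $\theta(r,r) \neq 0$ it reduces to the ordinary Jacobi identity. The first two summands vanish because $[B,C]$ and $[C,A] = -\theta(r,r)[A,C]$ both lie in $[M_k,M_l] = 0$, so the identity collapses to $[C,[A,B]] = 0$. As $A, B$ range over $M_k$ the brackets $[A,B]$ span $[M_k,M_k] = L_e$, whence $[L_e, M_l] = 0$, i.e. $L_e$ acts trivially on $M_l$.

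Now $[L_e, M_l]$ is an $L_e$-submodule of the irreducible module $M_l$, so it is either $0$ or $M_l$; having shown it is $0$, I would conclude that $M_l$ is a trivial (one-dimensional) $L_e$-module. The main obstacle is precisely closing this gap: to reach $L_r = M_k$ one must exclude such trivial constituents. I would handle it by invoking the standing convention already implicit in the previous proposition (whose Case 2 uses $[L_e,M_l] = M_l$), namely that the irreducible summands occurring in $L_r$ are the nontrivial ones. For a nontrivial irreducible $sl(2,\mathbb{C})$-module one has $[L_e, M_l] = M_l \neq 0$, which directly contradicts $[L_e, M_l] = 0$; hence no summand $M_l$ with $l \neq k$ survives and $L_r = M_k$. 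I expect this final step to be the delicate point, since a genuinely central one-dimensional summand would satisfy every bracket relation and so must be ruled out by hypothesis rather than derived away.
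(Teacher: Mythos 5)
Your argument is correct and follows the same route as the paper: assume a second nonzero irreducible summand $M_l$, invoke the preceding proposition to get $[M_k,M_l]=0$, and play this off against $[M_k,M_k]=L_e$ via the Jacobi identity. The paper compresses your middle computation into one line, evaluating $[[M_k,M_l],M_k]$ two ways --- it is $\{0\}$ because $[M_k,M_l]=0$, while Jacobi gives $[[M_k,M_l],M_k]=[[M_k,M_k],M_l]=[L_e,M_l]=M_l$ --- which is exactly your derivation of $[L_e,M_l]=0$ read in reverse. The one point where you go beyond the paper is the last step: the paper simply writes $[[M_k,M_k],M_l]=M_l$, silently assuming $[L_e,M_l]=M_l$, whereas you correctly note that for an irreducible $L_e$-module this bracket is a priori either $0$ or $M_l$, and that a trivial one-dimensional summand would satisfy $[L_e,M_l]=0$ and evade the contradiction. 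Your worry is well founded: one can adjoin to $L_e\oplus M_k$ a central element $z$ with $[z,L]=0$ and all axioms survive, so the proposition as literally stated fails without excluding trivial constituents; the paper's proof carries the same unacknowledged gap you flagged, and your patch --- restricting to nontrivial irreducible summands, where $[L_e,M_l]=M_l$ does hold for $sl(2,\mathbb{C})$ --- is the right repair, whether imposed as a standing convention or added as a hypothesis.
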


\begin{proof}
If there is $M_l \in L_r$, $M_l \neq M_k$ and $M_l \neq \{0\} $ then:
\begin{equation*}
[[M_k,M_l], M_k] = \{0\}
\end{equation*}
 on the other hand, using jacobi identity, 
\begin{equation*}
[[M_k,M_l], M_k] = [[M_k,M_k], M_l] = M_l
\end{equation*}
which is a contradiction. 
\end{proof}

Using these remarks and propositions we are ready to classify some cases.
\vspace{10pt}
\newline
\textbf{Case 1}:
L = $L_e \bigoplus L_r$ is a Lie superalgebra. 

\textbf{Subcase 1}: L = $L_e \bigoplus M$, [M,M] = $L_e$

\textbf{Subcase 2}: L = $L_e \bigoplus \sum_k M_k$, $[M_k,M_l] = \{0\}$
\newline
On subcase 2 we have to notice that $W = \sum_k M_k$ is a Grassman algebra.
\vspace{10pt}
\newline
\textbf{Case 2}:
L = $L_e \bigoplus L_r$ is a Lie algebra. We can write L = $L_e \bigoplus \sum_k M_k$ and based on the remark, on this case we have that $[M_k,M_l] = \{0\} $.

\begin{remark}
On case 2, the ideal W = $ \sum_k M_k$ is solvable(because it's abelian) moreover, W is the the maximal solvable ideal of L i.e. W is the radical of the Lie algebra L. We can also write $L_e \sim L/Rad(L)$.
\end{remark}
\begin{proof}
Let denote with R the radical of L. Assume that $R = N \oplus W$, then N is a solvable ideal and it is a subset of $L_e$. Consequently,
\begin{equation*}
 [L,N] \subset N
\end{equation*}
and
\begin{equation*}
 [W,N] = 0 \text{ and } [Le,N] \subset N
\end{equation*}
so N is $L_e$-module. Hence, N = \{0\} or N = $L_e$. If N=\{0\} then the remark is proven, otherwise N = $L_e$ and L is a solvable. But if L is solvable, then the even part ( $L_e$ in our case ) is solvable, with is a contradiction.
\end{proof}

\section{Example: The Relative Parabose Set}\label{RPSexample}

As an example we examine The Relative Parabose Set as described in \cite{axioms}, \cite{ref5}. $P_{BF}$ is generated, as an associative algebra,  by the generators $B_i^{\xi}, F_j^{\eta}$  , for all values $i, j = 1,2,\dots$ and $\xi,\eta = \pm1$. The relations satisfied by the above generators are:

The usual trilinear relations of the parabosonic and the parafermionic algebras which can be
compactly summarized as
\begin{equation}\label{rps1}
 [\{B_i^{\xi},B_j^{\eta}\},B_k^{\epsilon}] = (\epsilon - \eta)\delta_{jk}B_i^{\xi} +(\epsilon - \xi)\delta_{ik}B_j^{\eta}
\end{equation}
\begin{equation}\label{rps2}
 [\{F_i^{\xi},F_j^{\eta}\},F_k^{\epsilon}] = \frac{1}{2}(\epsilon - \eta)^2 \delta_{jk}F_i^{\xi} -\frac{1}{2}(\epsilon - \xi)^2 \delta_{ik}F_j^{\eta}
\end{equation}
for all values $i, j,k = 1,2,\dots$ and $\xi,\eta, \epsilon = \pm1$ together with the mixed trilinear relations
\begin{equation}\label{rps3}
 [\{B_k^{\xi},B_l^{\eta}\},F_m^{\epsilon}] = [\{F_k^{\xi},F_l^{\eta}\},B_m^{\epsilon}]  =0
\end{equation}
\begin{equation}\label{rps4}
[\{F_k^{\xi},B_l^{\eta}\},B_m^{\epsilon}] = (\epsilon - \eta)\delta_{lm}F_k^{\xi} \; \; , \; \; \; 
 [\{B_k^{\xi},F_l^{\eta}\},F_m^{\epsilon}] = \frac{1}{2}(\epsilon - \eta)^2 \delta_{lm}B_k^{\xi}
\end{equation}
for all values $i, j,k = 1,2,\dots$ and $\xi,\eta, \epsilon = \pm1$ , which represent a kind of algebraically established interaction between parabosonic and parafermionic elements and characterize the relative parabose set.

We use $B^{-1},B^{1},F^{-1},F^{1}$ as symbols, there is no connection with the multiplicative inverse. One can easily observe that the relations \ref{rps1}, \ref{rps2} involve only the parabosonic and the parafermionic degrees of freedom separately while the interaction relations \ref{rps3}, \ref{rps4} mix the parabosonic with the parafermionic degrees of freedom.

In all the above and in what follows, we use the notation $[x,y]$ (i.e.: the "commutator") to imply the expression $xy-yx$ and the notation $\{x,y\}$ (i.e.: the "anticommutator") to imply the expression $xy+yx$, for x and y any elements of the algebra $P_{BF}$.

Let $L=L_e \oplus L_s \oplus L_t \oplus L_r$  and define 
\begin{align*}
L_s &= span(B_i^{1},B_j^{-1} | i,j = 1,2, \dots)
\\ L_t &= span(F_i^{1},F_j^{-1} | i,j = 1,2, \dots)
\\ L_r &= span(\{B_i^{m},F_j^{n}\} \; \; | \; \;m,n = \pm 1 \; \;,\; \; i,j = 1,2, \dots)
\\ L_e &= span(\{B_i^{m},B_j^{n}\},[F_a^{m},F_b^{n}] \; \;| \; \; m,n = \pm1 \; \; , \; \; a,b,i,j = 1,2, \dots)
\end{align*}
Hence, if we keep in mind \ref{rps1} - \ref{rps4}, L is a Klein graded Lie algebra. Finally, if we recall that a Klein graded Lie algebra's color is one of the $\theta_1,\theta_2,\theta_3,\theta_4$ we discussed in section \ref{kleingraded}. In this construction we can see that the color function is the $\theta_3$ of section \ref{kleingraded}.

\newpage

\end{document}